\numberwithin{equation}{section}
\newcommand{\kommentar}[1]{}
\newcommand{\F}{\mathbb{F}}
\newcommand{\R}{\mathbb{R}}
\newcommand{\N}{\mathbb{N}}
\newcommand{\Z}{\mathbb{Z}}
\newtheorem{thm}{Theorem}[section]
\newtheorem{lem}[thm]{Lemma}
\newtheorem{rem}[thm]{Remark}
\newcommand{\dd}{\;\mathrm{d}}
\theoremstyle{remark}
\DeclarePairedDelimiter\floor{\lfloor}{\rfloor}
\title{ Cannonball Polygons with Multiplicities}
\author{Anji Dong, Katerina Saettone, Kendra Song and Alexandru Zaharescu}
\address{
Anji Dong: Department of Mathematics,
University of Illinois Urbana-Champaign,
Altgeld Hall, 1409 W. Green Street,
Urbana, IL, 61801, USA}
\email{anjid2@illinois.edu}
\address{
Katerina Saettone: Department of Mathematics,
University of Illinois Urbana-Champaign,
Altgeld Hall, 1409 W. Green Street,
Urbana, IL, 61801, USA}
\email{kas18@illinois.edu}
\address{
Kendra Song: Department of Mathematics,
University of Illinois Urbana-Champaign,
Altgeld Hall, 1409 W. Green Street,
Urbana, IL, 61801, USA}
\email{kendras4@illinois.edu}
\address{
Alexandru Zaharescu: Department of Mathematics,
University of Illinois Urbana-Champaign,
Altgeld Hall, 1409 W. Green Street,
Urbana, IL, 61801, USA and ``Simion Stoilow" Institute of Mathematics of the Romanian Academy, 
P. O. Box 1-764, RO-014700 Bucharest, Romania}
\email{zaharesc@illinois.edu}  
\begin{document}
\nocite{*}
\setcounter{tocdepth}{1}
\keywords{Cannonball Problem, Hardy-Littlewood method, exponential sums, square-pyramidal numbers}
\subjclass{Primary: 11P05. Secondary: 11P55, 11L07, 11L15}
\begin{abstract}
    We generalize the Cannonball Problem by introducing integer-valued and non-increasing arithmetic functions $w$. We associate these functions $w$ with certain polygons, which we call cannonball polygons. Through this correspondence, we show that for any $\mathcal{Z}\in\N$, there exists a cannonball polygon with multiplicity 8 and largest side of length $\mathcal{Z}$. Moreover, for any multiplicity $s$ greater than 8, we provide an asymptotic formula for the number of distinct classes of cannonball polygons with multiplicity $s$. 
\end{abstract}
\maketitle
\section{Introduction}\label{sec: Introduction}
During an expedition between 1585 and 1586 to Roanoke Island, Sir Walter Raleigh asked Thomas Harriot, the scientific advisor for the voyage, how to efficiently stack cannonballs. Around 1587, Harriot provided a formula for the number of balls that can be stacked within a square pyramidal stack. As part of the larger class of figurate numbers, square pyramidal numbers are well known.  The study of these numbers traces back to Archimedes and Fibonacci. More details about this sequence can be found on the On-Line Encyclopedia of Integer Sequences, under the listing A000330 \cite{oeisA000330}. Let $S_n$ denote the $n$-th square pyramidal number, which is defined by
\begin{align}
   S_n &= \sum_{i = 1}^n i^2= \frac{2n^3+3n^2+n}{6}.\label{defn: square pyramidal number} 
\end{align}

The Cannonball Problem, officially posed by Lucas in \cite{NAM_1875_2_14__336_0}, asks which numbers are both square and square pyramidal. Lucas further proposed and conjectured that the solutions $(m,n)$ to $ S_{n} = m^{2}$ are only $(0,0), (1,1),$ and $(24, 70)$. 

In 1918, Watson confirmed Lucas' conjecture that besides 0 and 1, the only nontrivial solution is $S_{24}=70^2$. Watson's proof relies on methods involving elliptic curves. Since Watson's proof, others have been able to take elementary approaches to the problem. For instance, \mbox{in 1984}, Ma transformed the Cannonball Problem into a Pell-type equation and used Diophantine analysis (see \cite{ma1984}). Later, Anglin pulled from Ma's ideas to provide an elementary proof to Lucas' original question (see \cite{anglin}).  

Recently, Anderson, Woodall, and the fourth author introduced and studied arithmetic polygons in \cite{amy}, which are polygons with the vertex $O$ and integer side lengths that satisfy the following conditions:

\begin{enumerate}
\item\label{I:ArithmeticPropertyLengths} As one traverses the polygon starting and finishing at $O$, each side has length one greater than the length of the side preceding it.
        \item\label{I:ArithmeticPropertyIntersection} For each side of the polygon, there is a line perpendicular to the side that passes through both $O$ and one of the vertices at either end of that side.
        \item\label{I:ArithmeticPropertyDegenerate} There are no degenerate vertices. That is, there are no vertices with an angle of $0$ or $\pi$ radians.
    \end{enumerate}

Anderson, Woodall, and the fourth author proved that for any odd number $m\geqslant3$, there is at least one arithmetic polygon with $m$ sides. Surprisingly, they also showed that there are no arithmetic polygons with an even number of sides $m\in\{4,6,8,\cdots, 50\}$, but there exists an arithmetic polygon that has $52$ sides. It is also shown in \cite{amy} that if the sides of an arithmetic polygon are integers $a+1$ through $c$, then there is a $b$ such that $a + 1 < b < c$, which
guarantees a solution to the relation
\begin{align}\label{arithmetic polygon relation}
    (a+1)^2 + (a+2)^2 + \cdots + b^2 = (b+1)^2 + (b+2)^2 + \cdots + c^2.
\end{align}

For instance, the first occurring Pythagorean triple $(3,4,5)$ satisfies \eqref{arithmetic polygon relation} when $a = 2, b = 3,$ and $c = 5$. The resulting arithmetic polygon is a right triangle, and it is the only arithmetic polygon having three sides. For another example, the existence of an arithmetic polygon \mbox{with 52} sides is based on the identity
\[
12^2+13^2+\cdots+50^2=51^2+\cdots+63^2.
\]

Returning to \eqref{arithmetic polygon relation}, we notice that this relation is equivalent to 
\begin{align*}
    S_{b} - S_{a} = S_{c} - S_{b},
\end{align*}
where $S_{n}$ is given by \eqref{defn: square pyramidal number}. This relation is reminiscent of the Cannonball Problem, where instead of considering differences of square pyramidal numbers, only one square pyramidal number is compared to a square.





Our target is a modified version of these arithmetic polygons. Consider a polygon with integer side lengths and a vertex denoted by $O$ that satisfies the last two properties of an arithmetic polygon, and moreover satisfies the following:
\begin{itemize}
    \item[] Fix a positive integer $s$. As one traverses the polygon starting and finishing at \textit{$O$}, the first side has length 1. 
    The lengths of the consecutive sides are in non-decreasing order. The number of sides with length 1 is at most $s$, and for any for $k\in\N$, the number of sides with length $k$ is in non-increasing 
    order with respect to $k$, except for the final side which may have any integer length $\mathcal{Z}$. 
\end{itemize}

For example, we draw 8 sides of length 1, 4 sides of length 2, 0 sides of length 3, and the final side of length 4. Here the side lengths are in non-decreasing order, but the multiplicity of the larger side length is in non-increasing order except for the final side. 

We call such a polygon a \textit{cannonball polygon} of multiplicity $s$ for the following connection to the Cannonball Problem. Let $g$ be a sequence. The \textit{sup norm}, denoted by $||\cdot ||_\infty$, of $g$ is defined as
\[
||g||_\infty = \max \{g(i):i\in\N\}.
\]

Let $w(i)$ be a weight function on $\N$ that satisfies the following properties:
\begin{align*}
&\text{1. $w(i)\geqslant 0.$ }\\
&\text{2. There exists some $k\in \Z^+$ such that $w(i)=0$ for all $i\geqslant k$.}\\
&\text{3. $||w||_\infty \leqslant s$,} \text{ for some $s\in\N.$}\\
&\text{4. $w$ is non-increasing in the domain $[1,k].$}
\end{align*}

We call a weight function that satisfies the four properties above a \textit{cannonball weight function}. Using the Pythagorean Theorem, we immediately see that any cannonball polygon of multiplicity $s$ with final side of \mbox{length $\mathcal{Z}$} satisfies
\begin{align}
    \mathcal{Z}^2 = \sum_{i=1}^\infty w(i) i^2,\label{weight function for cannonball polygon}
    \end{align}
where $w(i)$ is a cannonball weight function. Conversely, by the Chainsaw process described \mbox{in \cite{amy}}, if \eqref{weight function for cannonball polygon} is fulfilled for some final side of length $\mathcal{Z}$ with some cannonball weight function $w$, then there always exists a corresponding cannonball polygon. We consider any two cannonball polygons to be in the same class if they are constructed from the same weight function. Then, there is a one-to-one correspondence between a class of cannonball polygons and a cannonball weight function. Note that each class can contain at most $2^n$ polygons, \mbox{where $n$} represents the number of sides the polygons have. 

As one can see, the original Cannonball Problem correlates to the special case of our problem with the restriction that $||w||_\infty = 1$ in \eqref{weight function for cannonball polygon}. In other words, a nontrivial cannonball polygon of multiplicity 1 can only be constructed with the final side of length $70$. The reader is referred to Figure 1.1(a) in \cite{amy} for an example construction of such a cannonball polygon. 

Our first objective is to ensure the existence of the construction of cannonball polygons with a final side length of any desired integer. In particular, we consider a case of small \mbox{multiplicity $s$} close to the original Cannonball Problem. The following theorem allows us to uniformly construct cannonball polygons of multiplicity 8 with a final side of any integer length.

\begin{thm}\label{generalized cannonball}
    For any positive integer $\mathcal{Z}$, there exists a cannonball polygon of multiplicity 8 with final side of length $\mathcal{Z}$. 
\end{thm}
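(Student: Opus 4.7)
My plan is to translate the statement into a Waring-type decomposition for square pyramidal numbers and then produce such a decomposition by a combination of a greedy construction and a small-residue patch-up.

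\emph{Reformulation via conjugate partitions.} Given a cannonball weight function $w$ with $\|w\|_\infty \le 8$, set
\[
a_j \;:=\; \#\{\, i \ge 1 : w(i) \ge j\,\}, \qquad j = 1,\ldots,8.
\]
The conditions on $w$ (non-increasing, integer-valued, bounded by $8$) are equivalent to $a_1 \ge a_2 \ge \cdots \ge a_8 \ge 0$, and conversely each such tuple determines an admissible $w$ via $w(i) = \#\{\,j : a_j \ge i\,\}$. Interchanging the order of summation gives
\[
\sum_{i \ge 1} w(i)\, i^2 \;=\; \sum_{j=1}^{8} \sum_{i=1}^{a_j} i^2 \;=\; \sum_{j=1}^{8} S_{a_j},
\]
so Theorem~\ref{generalized cannonball} is equivalent to the assertion that every positive square $\mathcal{Z}^2$ is a sum of at most eight square pyramidal numbers (with $S_0 = 0$ allowed, so that fewer than eight strictly positive summands count).

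\emph{Structured greedy descent.} I would then run a greedy algorithm on the number $\mathcal{Z}^2$: set $R_0 = \mathcal{Z}^2$, and at each stage choose $a_j$ to be the largest integer with $S_{a_j} \le R_{j-1}$, then put $R_j := R_{j-1} - S_{a_j}$. Because $S_{n+1}-S_n = (n+1)^2 < S_{n+1}$ for $n \ge 1$, the remainder satisfies $R_j < (a_j+1)^2 < S_{a_j+1}$, and this in turn forces $a_{j+1} \le a_j$ at the next step, so the resulting tuple is automatically non-increasing. Using the asymptotic $S_n \sim n^3/3$, one obtains $a_{j+1} = O\!\bigl(a_j^{2/3}\bigr)$, so the indices collapse doubly-geometrically and the residues $R_j$ become small after only a handful of iterations.

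\emph{Main obstacle and patch-up.} The essential difficulty is the tail of the descent: even when the residue $R_j$ has become small, it may land at an inconvenient value (for instance $R_j \in \{1,2,3\}$) that cannot be finished off within the remaining $8-j$ pyramidal slots while respecting monotonicity. My plan to repair this is a one-step ``back-off'': whenever the clean greedy stalls with a positive leftover, I decrement the most recently chosen $a_j$ by one (thereby returning $a_j^2$ units to the budget) and re-greedify from that point. A modular analysis of $\mathcal{Z}^2 \bmod S_n$ for the first few $n$ catalogues the residues that trigger a back-off, and one expects to show that a single such adjustment always suffices once $\mathcal{Z}$ exceeds an effective threshold $\mathcal{Z}_0$; the finitely many $\mathcal{Z} \le \mathcal{Z}_0$ are then handled by direct verification. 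Pinning down $\mathcal{Z}_0$ explicitly and rigorously proving that one back-off is always enough is the technical heart of the argument.
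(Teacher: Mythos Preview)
Your reduction via conjugate partitions is correct and matches the paper's own reduction: the theorem is equivalent to writing $\mathcal{Z}^2$ as a sum of at most eight square pyramidal numbers. The gap lies entirely in the greedy descent. Your bound $R_j<(a_j+1)^2$ is right, but it only gives $R_j=O\bigl(\mathcal{Z}^{2(2/3)^j}\bigr)$, so after all eight steps the worst-case residue is $R_8=O\bigl(\mathcal{Z}^{512/6561}\bigr)$, which tends to infinity with $\mathcal{Z}$. The tail is therefore not a finite list of inconvenient residues that a modular catalogue of $\mathcal{Z}^2\bmod S_n$ can handle; it is an unbounded leftover with no slots remaining. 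A single back-off at step $j$ replaces $R_j$ by $R_j+a_j^2<2(a_j+1)^2$, which is the same order of magnitude and does not break the exponent recursion $e_{j+1}=\tfrac{2}{3}e_j$. One would need of order $\log\log\mathcal{Z}$ greedy layers to force the residue below any fixed constant, and eight is not enough; no bounded number of single-step back-offs repairs this. This is not a bookkeeping issue but exactly the reason na\"ive greedy arguments do not prove Waring-type theorems with a fixed number of summands.

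The paper takes a completely different route. It proves the stronger assertion that \emph{every} positive integer is a sum of at most eight $S_n$'s, via a Linnik-type structured identity: for $a,b,c<x$,
\[
S_{x+a+1}+S_{x-a}+S_{x+b+1}+S_{x-b}+S_{x+c+1}+S_{x-c}=(2x^3+6x^2+7x+3)+(4x+4)\Bigl(\tfrac{a^2+a}{2}+\tfrac{b^2+b}{2}+\tfrac{c^2+c}{2}\Bigr),
\]
so six pyramidal summands already cover every residue in the relevant progression modulo $4x+4$ (via Gauss's three-triangular-number theorem). Taking $x=p-1$ for a prime $p\asymp m^{1/3}$ and reserving two further summands $S_\ell,S_r$, the problem becomes finding $(\ell,r)$ in a prescribed box on the curve $S_\ell+S_r\equiv L\pmod{4p}$. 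That is a lattice-point count on an explicit cubic curve, handled by the Hasse--Weil bound together with a Bombieri-type equidistribution estimate for Lehmer points; it covers all $m\geqslant 1.907\times 10^{28}$, and the remaining range is closed by an iterated descent plus a machine check. The structured identity is the missing idea in your approach: it converts the additive problem into a curve-counting problem where square-root cancellation is available, something no local perturbation of a greedy choice can supply.
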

\par

Besides the existence of a cannonball polygon with multiplicity 8 and a final side that has a length of an arbitrary integer $\mathcal{Z}$, for large enough $\mathcal{Z}$, we are also interested in an asymptotic formula for the number of classes of cannonball polygons with multiplicity $s \geqslant 9$, which has a final side of length $\mathcal{Z}$. We then have the following result.

\begin{thm}\label{Theorem: Representation}
     Let $s \in \N$, and let $\mathfrak{C}_s(\mathcal{Z})$ denote the number of classes of cannonball polygons with multiplicity $s$ and largest side of length $\mathcal{Z}$. Then for any integer $\mathcal{Z}\geqslant \exp{\left(\frac{1}{2} e^{94}\right)}$,
    \[
    \left|\mathfrak{C}_9(\mathcal{Z}) -\frac{4}{125}\bigg(\Gamma\bigg(\frac{4}{3}\bigg)\bigg)^{9}\mathfrak{S}_{9,S}(\mathcal{Z}^2) \mathcal{Z}^{4}\right|\leqslant 10^{14}\mathcal{Z}^{4-\frac{1}{54}},
    \]
    where $\Gamma$ is the Gamma function, and the arithmetic factor $\mathfrak{S}_{9,S}$, defined by \eqref{Arithmetic Factor for s=9} and \eqref{Definition of V(q,a)} below, satisfies the inequality
\begin{align*}
\mathfrak{S}_{9,S}(\mathcal{Z}^2)&\geqslant \exp{\left(- e^{92} \right)}.  
\end{align*}
    A more general version for any $s\geqslant 9$ is provided in Lemma \ref{lem: representations of polygons}.
\end{thm}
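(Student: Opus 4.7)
The plan is to recast $\mathfrak{C}_s(\mathcal{Z})$ as a representation-counting problem involving square pyramidal numbers and then invoke the Hardy--Littlewood circle method. Every cannonball weight function $w$ with $\|w\|_\infty \leq s$ and non-increasing support decomposes uniquely as a sum of indicator functions $w = \sum_{j=1}^s \mathbf{1}_{[1,k_j]}$ with $k_1\geq k_2\geq\cdots\geq k_s \geq 0$ by setting $k_j = \#\{i : w(i)\geq j\}$. Under this bijection the identity \eqref{weight function for cannonball polygon} becomes $\mathcal{Z}^2 = \sum_{j=1}^s S_{k_j}$, so $\mathfrak{C}_s(\mathcal{Z})$ equals the number of partitions of $\mathcal{Z}^2$ into at most $s$ square-pyramidal parts. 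Letting $R_s(N)$ denote the number of \emph{ordered} $s$-tuples with $\sum_j S_{k_j} = N$, the quantity $s!\cdot\mathfrak{C}_s(\mathcal{Z})$ differs from $R_s(\mathcal{Z}^2)$ only by a boundary contribution from tuples with coincident entries, which I expect to absorb into the error.

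Next I set $K = \lfloor(3\mathcal{Z}^2)^{1/3}\rfloor$ and introduce the exponential sum $f(\alpha) = \sum_{k=0}^K e(\alpha S_k)$, yielding the Fourier identity
\[
R_s(\mathcal{Z}^2) \;=\; \int_0^1 f(\alpha)^s\, e(-\alpha\mathcal{Z}^2)\, d\alpha.
\]
I dissect $[0,1]$ into Farey-type major arcs $\mathfrak{M}$ around rationals $a/q$ with $q$ small (of width depending on a parameter $P$ to be optimized) and minor arcs $\mathfrak{m}=[0,1]\setminus\mathfrak{M}$. On a major arc with $\alpha = a/q+\beta$, partial summation separates $f$ into an arithmetic factor $V(q,a) = \sum_{k=0}^{q-1} e((a/q) S_k)$ and an analytic integral $I(\beta)=\int_0^K e(\beta S(x))\,dx$; substituting $y = x^3/3$ (exploiting $S_k = k^3/3 + O(k^2)$) converts $I$ into an integral of $y^{-2/3}$, whose $s$-fold additive convolution at $\mathcal{Z}^2$ gives $\Gamma(4/3)^s$ by Dirichlet's formula. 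Assembling all major arcs, collecting the rational phases into the singular series $\mathfrak{S}_{s,S}$, and dividing by $s!$ then produces the stated main term $(4/125)\Gamma(4/3)^9 \mathfrak{S}_{9,S}(\mathcal{Z}^2)\mathcal{Z}^4$.

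The minor-arc contribution is controlled by combining a Weyl-type pointwise bound $\sup_{\alpha\in\mathfrak{m}}|f(\alpha)| \ll K^{1-\sigma+\varepsilon}$---available because $S_k=(2k^3+3k^2+k)/6$ is a cubic polynomial with leading coefficient $1/3$---with a Hua-type mean value estimate $\int_0^1|f(\alpha)|^8\,d\alpha\ll K^{5+\varepsilon}$. Factoring out one power of $f$ in sup norm gives
\[
\int_{\mathfrak{m}}|f(\alpha)|^9\,d\alpha \;\leq\; \Bigl(\sup_{\alpha\in\mathfrak{m}}|f(\alpha)|\Bigr)\int_0^1|f(\alpha)|^8\,d\alpha \;\ll\; K^{6-\sigma+\varepsilon} \;=\; \mathcal{Z}^{4-2\sigma/3+\varepsilon}.
\]
Tuning the Weyl exponent $\sigma$ and the major-arc parameter $P$ should produce the claimed savings $1/54$. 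To lower-bound $\mathfrak{S}_{9,S}(\mathcal{Z}^2)$ by $\exp(-e^{92})$, I factor it as an Euler product $\prod_p \sigma_p$ and bound each local density from below by an explicit Hensel-type lifting argument, which is comfortable once nine variables are available.

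The principal obstacle will be the fully \emph{explicit} tracking of constants: every intermediate estimate has to be carried out with numerically specified coefficients rather than $O$-notation, since the statement asks for an error prefactor $10^{14}$, a threshold $\exp(\tfrac{1}{2}e^{94})$ for $\mathcal{Z}$, and a singular-series lower bound $\exp(-e^{92})$. This forces explicit quantitative versions of the Weyl bound, Hua's lemma, the major-arc completion, and the local factor estimates $\sigma_p$ for small $p$. A secondary technical wrinkle is that $S_k$ is a genuine cubic polynomial rather than a pure cube, so the analysis of the complete sums $V(q,a)$ needs a careful CRT reduction to prime-power moduli together with Weil-type bounds, rather than an appeal to the cleaner case of sums of $k^3$.
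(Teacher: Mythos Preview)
Your plan is essentially the paper's: reduce $\mathfrak{C}_s(\mathcal{Z})$ to counting representations of $\mathcal{Z}^2$ by $s$ square pyramidal numbers, then run the Hardy--Littlewood method with a Weyl-type bound on the minor arcs, Hua's inequality for the eighth moment, and a standard major-arc approximation producing the singular series times the singular integral. The paper does exactly this in Sections~\ref{sec: initial setup}--\ref{sec: Proof of Main Theorem}, so the skeleton matches.

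Two points of departure deserve comment. First, the paper identifies $\mathfrak{C}_s(\mathcal{Z})$ \emph{directly} with the ordered count $\mathcal{C}_s(\mathcal{Z}^2)=\int_0^1 f(\alpha)^s e(-\alpha \mathcal{Z}^2)\,d\alpha$ (see the paragraph preceding Lemma~\ref{Lemma: Representation} and the one-line deduction of Theorem~\ref{Theorem: Representation}: ``Taking $m=\mathcal{Z}^2$''). It never divides by $s!$. Your conjugate-partition bijection is correct and shows that cannonball weight functions correspond to \emph{unordered} tuples $k_1\ge\cdots\ge k_s\ge 0$, so following your plan literally would produce a main term off by a factor of $9!$ from the paper's. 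To reproduce the stated formula with constant $4/125$ you must adopt the paper's identification rather than your partition count; otherwise your asymptotic will not match the theorem as written.

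Second, your definition $V(q,a)=\sum_{k=0}^{q-1} e\bigl((a/q)S_k\bigr)$ is not the correct complete sum. Because $S_n=(2n^3+3n^2+n)/6$ carries a denominator $6$, the residue $S_n\bmod q$ has period $6q$ in general (not $q$); the paper establishes this in Lemma~\ref{Congruence} and accordingly defines $V(q,a)=\sum_{n=1}^{6q} e\bigl((a/q)S_n\bigr)$, dividing by $6q$ rather than $q$ in the major-arc approximation. You anticipate a ``secondary wrinkle'' of this sort at the end of your proposal, and the fix is routine, but it feeds into the multiplicativity of the singular series (Lemmas~\ref{V Multiplicativity 1}--\ref{V Multiplicativity 2}) and into every explicit constant, so it has to be done correctly from the start.
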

\begin{rem}
    The number $\exp{\left(\frac{1}{2} e^{94}\right)}$ in Theorem \ref{Theorem: Representation} is enormous, and comes from Lemma \ref{Singular Series Extension} when we \\\mbox{need $q\geq e^{e^{45}}$} to obtain a sharp bound on $|V(q)|$. In fact, we could greatly reduce this number, but the trade-off would be a looser upper bound on $|V(q)|$, resulting in a larger exponent in the error term in Theorem \ref{Theorem: Representation}. We decide to keep a smaller error term instead. Same reasoning applies to the lower bound $\exp{\left(- e^{92} \right)}$ of $\mathfrak{S}_{9,S}(\mathcal{Z}^2)$.
\end{rem}
In light of the one-to-one correspondence between cannonball polygons and cannonball weight functions, we will reduce the proofs of Theorems \ref{generalized cannonball} and \ref{Theorem: Representation} to two auxiliary lemmas on cannonball weight functions, which are Lemmas \ref{sum of eight square pyramidal number} and \ref{Lemma: Representation} stated below. To prove \mbox{Lemma \ref{sum of eight square pyramidal number}}, we begin with Linnik's method \cite{linnik1943representation} by writing sums of six terms, and then reducing the problem to counting lattice points on an algebraic curve with congruence constraints. To do so, we make use of Bombieri's theorem on exponential sums along algebraic curves, and a result by Cobeli and the fourth author \cite{cobeli} on counting points on algebraic curves satisfying congruence constraints. As we will see below, this leaves us with finitely many positive integers to check, which will be fulfilled via a simple and efficient algorithm. To prove \mbox{Lemma \ref{Lemma: Representation}}, we employ the Hardy-Littlewood Circle method. Through the use of analytic techniques and algebraic approaches, such as estimates of exponential sums and Hasse-Weil bound, we are able to obtain the asymptotic formula with an explicit power-saving error term.    

\subsection*{Structure of the Paper} 
The paper is organized as follows. In Section \ref{sec: general notations}, we introduce some standard notation used in the paper. In Section \ref{sec: auxiliary lemmas}, we prove the two auxiliary lemmas mentioned above and show that our two main theorems can be proved assuming these two lemmas. \mbox{Section \ref{sec: Proof of Lemma 3.1}} is devoted to the proof of the first auxiliary lemma, and thus finishes the proof of Theorem \ref{generalized cannonball}. In Sections \ref{sec: initial setup} to \ref{sec: Proof of Main Theorem}, we prove the second auxiliary lemma using the circle method, and thus finish the proof of Theorem \ref{Theorem: Representation}.
\section{General Notations}\label{sec: general notations} 
 We employ some standard notation that will be used throughout the article.

\begin{itemize}
    \item Throughout the paper, the expressions $f(X)=O(g(X))$, $f(X) \ll g(X)$, and $g(X) \gg f(X)$ are equivalent to the statement that $|f(X)| \leqslant C|g(X)|$ for all sufficiently large $X$, where $C>0$ is an absolute constant. 
    \item We define by $d(n)$, the divisor function defined on $\mathbb{N}$ by $d(n) = \sum_{d \mid n}1.$
    \item We write $e(\theta)$ to denote the expression $e^{2\pi i \theta}$.
    \item Given $\alpha \in \mathbb{R}$, the notation $\|\alpha\|$ denotes the smallest distance of $\alpha$ to an integer.
    \item Given a prime $p$, the notation $\mathbb{F}_{p}$ refers to the set of residue classes modulo $p$.
\end{itemize}

\section{Auxiliary lemmas for Theorem \ref{generalized cannonball} and Theorem \ref{Theorem: Representation}}\label{sec: auxiliary lemmas}

In this section, we present the proofs of the two main theorems assuming two auxiliary lemmas. Thus, the problem is reduced to handling these two lemmas. We first show that the following lemma implies Theorem \ref{generalized cannonball}.

\begin{lem}\label{sum of eight square pyramidal number}
    Any positive integer $m$ can be written as a sum of at most eight square pyramidal numbers. 
\end{lem}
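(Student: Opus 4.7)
The plan is to follow the Linnik-style approach sketched in the introduction, adapted to the cubic pyramidal polynomial $S_n = n(n+1)(2n+1)/6$. First I would split the range of $m$: extract an explicit threshold $M_0$ such that all $m \leq M_0$ can be handled by a direct algorithmic check, and reduce the lemma for $m > M_0$ to an analytic/algebraic statement. For the finite range, a greedy recursion suffices: repeatedly subtract $S_{n}$ with $n = \lfloor (6m)^{1/3}\rfloor$ and use induction on the depth of the recursion, exploiting the fact that consecutive $S_n$'s are sufficiently dense to terminate within eight steps. Since this check is short, the real work is the statement for $m > M_0$.

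For $m > M_0$, the strategy is to write $m = S_a + S_b + r$, where the pair $(a,b)$ is chosen so that the residue $r = m - S_a - S_b$ is nonnegative, of order $m$, and lies in a convenient arithmetic progression modulo a fixed small modulus $q$. The bulk of the argument is then to show that for every such $r$ there exist six pyramidal numbers summing to it, i.e.\ that the representation count
\[
R_6(r) = \#\bigl\{(n_1,\dots,n_6)\in\N^6 : S_{n_1}+\cdots+S_{n_6}=r\bigr\}
\]
is positive. Following Linnik, rather than running a full circle-method asymptotic, I would pair the six variables into three blocks $(n_1,n_2),(n_3,n_4),(n_5,n_6)$, observe that $S_{n_{2j-1}}+S_{n_{2j}}$ parametrizes values along a low-degree algebraic curve in $\Z^2$, and recast the six-term equation as a lattice-point counting problem on an explicit algebraic curve (or surface) with congruence constraints coming from the chosen residue class mod $q$.

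The main analytic input, and the step I expect to be the principal obstacle, is to establish a nontrivial lower bound for the number of lattice points on this curve satisfying the congruence conditions, uniformly in the parameters $(a,b)$ and in $r$. Here Bombieri's theorem on exponential sums along algebraic curves supplies cancellation in the associated complete character sums modulo $q$, and the counting result of Cobeli and the fourth author in \cite{cobeli} converts this cancellation into an asymptotic of the shape $R_6(r) \geq c\,r + O(r^{1-\eta})$ with $c>0$ and an explicit $\eta$. The delicate point is to track every implicit constant, in particular the dependence on the degree of the curve, on $q$, and on the Hasse--Weil-type error, so that the range of validity of the asymptotic starts at a threshold $M_0$ small enough for the greedy algorithm in the first step to cover all $m\leq M_0$ in reasonable computational time.

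Assembling the three pieces then proves the lemma: the lower bound $R_6(r)>0$ yields a six-term representation of $r$, the two terms $S_a+S_b$ complete it to an eight-term representation of $m>M_0$, and the algorithmic check disposes of $m\leq M_0$. By the one-to-one correspondence between cannonball polygons and cannonball weight functions explained in Section~\ref{sec: Introduction}, Lemma~\ref{sum of eight square pyramidal number} then implies Theorem~\ref{generalized cannonball}, since applying the lemma to $m = \mathcal{Z}^2$ produces a cannonball weight function $w$ satisfying \eqref{weight function for cannonball polygon} with $\|w\|_\infty \leq 8$.
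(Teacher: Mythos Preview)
Your proposal captures the right ingredients from the introduction (Linnik's method, Bombieri, Cobeli--Zaharescu, algorithmic check for small $m$), but the roles in the $6+2$ split are inverted, and the key algebraic identity that makes the method work is missing.

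In the paper the six-term block is \emph{not} handled by lattice-point counting. It is parametrized via the Linnik pairing
\[
f(x+a+1)+f(x-a)+f(x+b+1)+f(x-b)+f(x+c+1)+f(x-c)
=(2x^{3}+6x^{2}+7x+3)+(4x+4)\bigl(T_a+T_b+T_c\bigr),
\]
with $T_n=n(n+1)/2$. One takes $x=p-1$ for a suitable prime $p$, so $4x+4=4p$; then the six-term contribution is completely determined once the quotient $(m-f(\ell)-f(r)-(2x^{3}+6x^{2}+7x+3))/(4p)$ is a nonnegative integer in the right range, because Gauss's Eureka theorem guarantees a representation of that quotient as $T_a+T_b+T_c$. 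The curve-counting step (Bombieri plus Cobeli--Zaharescu) is applied to the \emph{two} remaining variables $(\ell,r)$: one counts solutions of $f(\ell)+f(r)\equiv L\pmod p$ in prescribed residue classes mod $4$ and in a box, via the Hasse--Weil bound for the associated cubic curve.

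Your plan puts two terms first and then asks for a lower bound on $R_6(r)$ by ``pairing the six variables into three blocks'' and reducing to a curve. But three independent pairs do not collapse to a one-dimensional curve problem; without the common center $x$ and the resulting factor $(4x+4)$, the six-term equation is a four-dimensional variety, and neither Bombieri's curve estimate nor the Cobeli--Zaharescu Lehmer-point count applies directly. The identity above is precisely what converts the problem into a \emph{two}-variable curve count plus an appeal to triangular numbers. Once you insert this identity and swap which block receives the curve argument, your outline matches the paper's proof (Lemmas~\ref{elliptic curve satisfied}--\ref{main lemma}), including the explicit threshold $m\geqslant 1.907\times 10^{28}$ and the staged greedy reduction for smaller $m$.
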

\begin{proof}[Proof of Theorem \ref{generalized cannonball}]
    Assuming Lemma \ref{sum of eight square pyramidal number}, let $m$ be an arbitrary positive integer. Since $m$ can be written as a sum of at most eight square pyramidal numbers, there exists $j\in\Z^+$ \mbox{and $n_1< n_2<\cdots< n_j\in\Z^+$} such \mbox{that $m = \sum_{i=1}^j h(i)f(n_i)$,} where $h(i)$ is a weight function and $\sum_{i=1}^j h(i)\leqslant 8$. 
    Then, define the function $w(i)$ such that 
    \begin{align*}
        &w(1)=w(2)=\cdots=w(n_1)=\sum_{i=1}^j h(i)\\
        &w(n_1+1)=w(n_1+2)=\cdots=w(n_2)=\sum_{i=2}^j h(i)\\
        &\cdots\cdots\cdots\\
        & w(n_{j-1}+1)=\cdots=w(n_j)=h(n_j).
    \end{align*}
    Thus $w(i)$ satisfies all four required properties of a weight function, and moreover, 
    \[
    m = \sum_{i=1}^\infty w(i) i^2.
    \]
Taking $m = \mathcal{Z}^2$ gives us Theorem \ref{generalized cannonball}.
\end{proof}
Using the same arguments as in Lemma \ref{sum of eight square pyramidal number}, we see that for any integer $s\geqslant 9$ and $\mathcal{Z}\in\N$, the number of classes of cannonball polygons of multiplicity $s$ and with final side of \mbox{length $\mathcal{Z}$} is equal to the number of representations of $\mathcal{Z}^2$ as a sum of $s$ square pyramidal numbers. Therefore, Theorem \ref{Theorem: Representation} can be deduced from the following lemma:
\begin{lem}\label{Lemma: Representation}
For any integer $m$, let $\mathcal{C}_s(m)$ denote the number of representations of $m$ as a sum of $s$ square pyramidal numbers. Then for all $m>e^{e^{94}}$, 
\begin{align*}
\bigg|\mathcal{C}_9(m) -\frac{4}{125}\bigg(\Gamma\bigg(\frac{4}{3}\bigg)\bigg)^{9}\mathfrak{S}_{9,S}(m) m^{2}\bigg|\leqslant 10^{14}m^{2-\frac{1}{108}},
\end{align*}
where $\Gamma$ is the Gamma function, and the arithmetic factor $\mathfrak{S}_{9,S}(m)$, defined by \eqref{Arithmetic Factor for s=9} and \eqref{Definition of V(q,a)} below, satisfies the inequality
\begin{align*}
\mathfrak{S}_{9,S}(m)&\geqslant \frac{1}{e^{ e^{92}}}.  
\end{align*}
A more general version for $s\geqslant 9$ is given in Lemma \ref{main Theorem for formula}.
\end{lem}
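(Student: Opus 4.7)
The plan is to implement the Hardy-Littlewood circle method for representations of $m$ as a sum of nine square pyramidal numbers. Let $N$ be chosen slightly larger than $(3m)^{1/3}$ so that $S_n \leqslant m$ forces $n \leqslant N$, and set
\[
g(\alpha) = \sum_{n=0}^{N} e(\alpha S_n),
\]
so that $\mathcal{C}_9(m) = \int_0^1 g(\alpha)^9 e(-\alpha m) d\alpha$. The first step is to perform a Farey-type dissection of (a translate of) the unit interval into major arcs $\mathfrak{M}$ centered at reduced fractions $a/q$ with $q \leqslant Q$, for a parameter $Q$ that is a small power of $N$ (to be optimized against the minor-arc exponent), together with the complementary minor arcs $\mathfrak{m}$.

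On each major arc, with $\alpha = a/q + \beta$, I would approximate $g(\alpha) \approx q^{-1} V(q,a) v(\beta)$, where $V(q,a) = \sum_{r=1}^{q} e(a S_r / q)$ is a complete cubic exponential sum and $v(\beta) = \int_0^N e(\beta S(x)) dx$ is its smooth analog. Raising to the ninth power, summing, and extending the tails in the usual way factorizes the major-arc contribution as $\mathfrak{S}_{9,S}(m) \cdot J(m)$, where
\[
\mathfrak{S}_{9,S}(m) = \sum_{q=1}^{\infty} \frac{1}{q^9} \sum_{\substack{a=1\\(a,q)=1}}^{q} V(q,a)^9 e(-am/q), \qquad J(m) = \int_{-\infty}^{\infty} v(\beta)^9 e(-m\beta) d\beta.
\]
The singular integral is evaluated via the substitution $y = S(x)$: to leading order this converts $v(\beta)$ into a constant multiple of $\int_0^{\infty} y^{-2/3} e(\beta y) dy$, and after insertion into $J(m)$ the classical Dirichlet-integral identity over the simplex $\{\sum y_i = m,\ y_i \geqslant 0\}$ with exponents $1/3$ produces the $\Gamma(4/3)^9 m^2$ factor, with careful bookkeeping of the Jacobians yielding the stated coefficient $4/125$.

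The minor arcs are handled by Weyl's inequality applied to the cubic polynomial $S(x)$, giving a pointwise bound $|g(\alpha)| \ll N^{1-\sigma}$ on $\mathfrak{m}$ for a suitable $\sigma>0$. Combined with a Hua-type mean-value estimate of the form $\int_0^1 |g(\alpha)|^8 d\alpha \ll N^{5+\varepsilon}$, this yields $\int_{\mathfrak{m}} |g|^9 d\alpha \ll N^{6-\sigma+\varepsilon}$, which translates to the admissible error of size $m^{2-1/108}$ once $Q$ is optimized. If the routine Weyl estimate proves too weak to reach the prescribed exponent, the sharper Bombieri-type bounds on exponential sums along algebraic curves (already invoked for Lemma \ref{sum of eight square pyramidal number}) can be substituted to recover the gain.

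The principal obstacle is the explicit quantitative lower bound $\mathfrak{S}_{9,S}(m) \geqslant e^{-e^{92}}$. Multiplicativity reduces this to showing that every local density $\chi_p(m)$ is bounded away from zero, where $\chi_p(m)$ suitably normalises the count of solutions of $\sum_{i=1}^{9} S(x_i) \equiv m \pmod{p^k}$. The primes $p \in \{2,3\}$ require direct verification and Hensel lifting since $S(x)$ is only a $\Z$-valued polynomial (not a form) and the denominator $6$ interacts with these primes; for primes $p\geqslant 5$ I would apply Hensel's lemma after producing smooth solutions via the Hasse-Weil bound for the curves obtained by fixing seven of the nine variables, forcing $\chi_p(m) = 1 + O(p^{-3/2})$ so that the Euler product converges with a fully effective tail. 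The doubly exponential constant reflects the cost of controlling the finitely many small-prime factors exhaustively, together with an absolutely explicit version of the Euler product's convergence uniform in $m$.
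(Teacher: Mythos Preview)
Your outline is essentially the paper's own argument: Farey dissection, Weyl's inequality plus a Hua-type eighth-moment bound on the minor arcs, the factorization into singular series times singular integral on the major arcs, and the lower bound for $\mathfrak{S}_{9,S}(m)$ via multiplicativity, Hensel lifting, and Hasse--Weil on the two-variable curves obtained by freezing seven coordinates. One technical point you should correct: since $S_n$ has denominator $6$, the residues $S_n \bmod q$ are periodic with period $6q$ rather than $q$ in general, so the complete sum must be taken as $V(q,a)=\sum_{n=1}^{6q} e(aS_n/q)$ and the major-arc approximation reads $g(\alpha)\approx (6q)^{-1}V(q,a)v(\beta)$; your awareness of the special role of $p=2,3$ in the local densities is the same phenomenon, but it already enters at the level of the major-arc replacement, not only in the Euler product.
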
 
\begin{proof}[Proof of Theorem \ref{Theorem: Representation}]
    Taking $m=\mathcal{Z}^2$ will immediately give us the result.
\end{proof}
Now the remaining work is to prove these two lemmas. 

\section{Proof of Lemma \ref{sum of eight square pyramidal number}} \label{sec: Proof of Lemma 3.1}

 In this section, we will prove Lemma \ref{sum of eight square pyramidal number}.  We first consider the sum of six square pyramidal numbers. 
Let
\[
f(x) = \frac{2x^3+3x^2+x}{6},
\]
and write
 \begin{align}
     \mathcal{S}_f(6) &= f(x+a+1)+f(x-a)+f(x+b+1)+f(x-b)+f(x+c+1)+f(x-c)\notag\\
     &= (2x^3+6x^2+7x+3)+(4x+4)\left(\frac{a^2+a}{2}+\frac{b^2+b}{2}+\frac{c^2+c}{2}\right),\label{sum of six pyramidal numbers}
 \end{align}
where $a,b,c<x$. 
Using analogous reasoning to Section 9.1 in \cite{ours} up to constants, by Gauss's Eureka Theorem, we can reduce the original statement to the following: for any sufficiently large positive integer $m$, there exist $x,\ell,r\in\Z^+$ such that
\begin{align*}
    &1.\quad 0<L-f(\ell)-f(r)< 6x^3\\
    &2.\quad (4x+4)\mid (L-f(\ell)-f(r)),
\end{align*}
where $L = m - (2x^3+6x^2+7x+3)$. We will refer to the above conditions as Condition 1 and Condition 2, respectively. 
\subsection{The Arithmetic Conditions.} In this subsection, our goal is to reduce our current conditions to counting lattice points on an algebraic curve.  Fix $t,\varepsilon\in \R^+$. We aim to find a prime $p$ such that 
\begin{align}
    (t-\varepsilon)p^3\leqslant m\leqslant (t+\varepsilon)p^3.\label{p cubed range}
\end{align}

If the existence of such $p$ is guaranteed, then let $x = p-1$, so $4x+4 = 4p$. Now we find suitable $\ell$ and $r$ satisfying Conditions 1 and 2. Substituting $x=p-1$ into \eqref{p cubed range}, 
\begin{align}\label{p cubed range 2}
(t-\varepsilon)p^3\leqslant L+2(p-1)^3+6(p-1)^2+7(p-1)+2\leqslant (t+\varepsilon)p^3.
\end{align}

Assume that $p$ is large enough. We may reduce \eqref{p cubed range 2} to
\begin{align}
    (t-\varepsilon-2)p^3<L< (t+\varepsilon-2)p^3.
\end{align}

We first consider Condition 1, which will be satisfied if 
\begin{align}
    L-6p^3 <f(\ell)+f(r)<L. \label{Alternative form of condition 1}
\end{align}

Recalling the range of $L$, we need to choose $\ell$ and $r$ such that 
\begin{align}
    (t-\varepsilon-8)p^3<f(\ell)+f(r)<(t+\varepsilon-2)p^3.\label{f(l)+f(r) bound 2}
\end{align}

If $\ell,r\in((\alpha-\delta)p,(\alpha+\delta)p)$ for some $\alpha>0$, then 
\begin{align}
     f(\ell) + f(r) < \frac{2}{3}(\alpha + \delta)^{3}p^{3}.  \label{f(l)+f(r) upper bound}
\end{align}

Combining this with \eqref{f(l)+f(r) bound 2}, we may assume 
\begin{align*}
(\alpha + \delta)^{3} < \frac{3t}{2}  + \frac{3\varepsilon}{2}-3,
\end{align*}
which yields
\begin{align}\label{Alpha Range Lower}
\alpha < \bigg(\frac{3t}{2}  + \frac{3\varepsilon}{2}-3\bigg)^{1/3}-\delta.
\end{align}

Similarly, we need 
\begin{align}\label{flfr lower bound}
    \frac{2}{3}(\alpha - \delta)^{3}p^{3}<f(\ell) + f(r),
\end{align}
giving the lower bound
\begin{align}\label{Alpha Range Upper}
\alpha > \bigg(\frac{3t}{2}  - \frac{3\varepsilon}{2}-12\bigg)^{1/3}+\delta.
\end{align}

Now we turn to Condition 2. To this end, we wish to find an $\ell$ and $r$ that satisfy the congruence
\[
f(\ell) + f(r) \equiv L \bmod 4p.
\]
Since we assume earlier that $p$ is large enough, in particular, we may assume $p>2$. Therefore, $(p,4)=1$. Using the Chinese Remainder Theorem, it suffices to consider the congruence equations  
\begin{align*}
f(\ell)+f(r) &\equiv L \bmod 4 \quad \textrm{and} \quad f(\ell) + f(r) \equiv L \bmod p.
\end{align*}

The first equivalence condition is the same as
\begin{align}
    -\ell^3-r^3+\frac{1}{2}(\ell^2-\ell)+\frac{1}{2}(r^2-r)\equiv L \bmod 4.\label{first congruence}
\end{align}

Consider $\ell_0,r_0\in \Z/4\Z$ such that $\ell\equiv \ell_0\bmod 4$ and $r\equiv r_0\bmod 4$. We divide \eqref{first congruence} into four cases: \mbox{if $L\equiv 0\bmod 4$,} then let $\ell_0 =r_0 = 0$; if $L\equiv 1\bmod 4$, then let $\ell_0 = 0$ and $r_0=2$; if $L\equiv 2\bmod 4$, then we pick $\ell = r_0=2$; finally, if $L\equiv 3\bmod 4$, then we pick $\ell_0=0$ \mbox{and $r_0=1$.} In each case, the chosen $\ell_0$ and $r_0$ will satisfy \eqref{first congruence}. Therefore, our initial problem is reduced to finding $\ell,r\in\mathbb{Z^+}$ with the conditions that 
\begin{align}
r,\ell &\in \left( (\alpha - \varepsilon) p,(\alpha + \varepsilon) p\right),\notag\\
\ell &\equiv \ell_0 \bmod 4,\notag \\
r&\equiv r_0\bmod 4,\notag \\
\textrm{and} \quad  f(\ell)+f(r)&\equiv L\bmod p,\label{final conditions}
\end{align}
where $\ell_0$ and $r_0$ are determined by the congruence of $L\bmod 4$, and $\alpha$ satisfies \eqref{Alpha Range Lower} and \eqref{Alpha Range Upper}. \par
\subsection{Counting Lattice Points along Algebraic Curves}
Consider the algebraic curve
\begin{align}\label{f(x) formulas mod p}
f(x)+f(y)+B \equiv 0\bmod p,
\end{align}
where $B\in \Z$ and $p \geqslant 5$ is a prime. 
Since $(3,p)=1$, \eqref{f(x) formulas mod p} can be rewritten as
\begin{align}
        x^3+\frac{3}{2}x^2+\frac{1}{2}x+y^3+\frac{3}{2}y^2+\frac{1}{2}y+3B\equiv 0\bmod p.\label{reduced form}
\end{align}

The next lemma is a variation of \cite[Lemma~6.6]{ours}, which gives a bound for the number of solutions to our curve in \eqref{final conditions}.
\begin{lem}\label{elliptic curve satisfied}
    Suppose $p \geqslant 5$ and let $\mathcal{N}_{p}$ denote the number of solutions to \eqref{reduced form} over $\mathbb{F}_{p}$. Then 
\[-2\sqrt{p}\leqslant\mathcal{N}_{p}-(p+1)\leqslant p.
\]
\end{lem}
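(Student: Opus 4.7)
The plan is to reduce \eqref{reduced form} to a symmetric cubic in two variables via a linear change of variables, exploit a factorization identity to split the analysis into a reducible and an irreducible case, and apply a Hasse--Weil-type bound to the latter.

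First I would rescale. Since $p \geqslant 5$, both $2$ and $3$ are invertible modulo $p$, so the substitution $u = 2x+1$, $v = 2y+1$ is a bijection of $\mathbb{F}_p^2$. A short computation gives
\[
x^{3}+\tfrac{3}{2}x^{2}+\tfrac{1}{2}x \;=\; \tfrac{1}{4}(u^{3}-u),
\]
and similarly for $y$. Multiplying \eqref{reduced form} by $4$ rewrites it as $u^{3}+v^{3}-u-v \equiv C \pmod{p}$ with $C := -12B \bmod p$, and the count $\mathcal{N}_{p}$ is preserved. The key algebraic observation is the identity
\[
u^{3}+v^{3}-u-v \;=\; (u+v)\bigl(u^{2}-uv+v^{2}-1\bigr),
\]
obtained by factoring $u^{3}+v^{3}=(u+v)(u^{2}-uv+v^{2})$ and collecting the linear terms. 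This identity is what makes the $C=0$ case degenerate, and ultimately what accounts for the asymmetric shape of the stated bound.

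For the reducible case $C \equiv 0 \pmod{p}$, the equation becomes $(u+v)(u^{2}-uv+v^{2}-1)\equiv 0$. I would count points on the line $L:u+v=0$ and the conic $Q:u^{2}-uv+v^{2}=1$ separately and combine by inclusion--exclusion. The line contributes exactly $p$ points. The quadratic form defining $Q$ has discriminant $-3$, which is nonzero modulo $p$ for $p\geqslant 5$, so $Q$ is non-degenerate and contributes at most $p+1$ affine $\mathbb{F}_{p}$-points (the precise count depending on the value of the Legendre symbol $\bigl(\frac{-3}{p}\bigr)$). Intersections satisfy $3v^{2}=1$, giving at most two points. Thus $p \leqslant \mathcal{N}_{p}\leqslant 2p+1$, which is exactly the range $-1\leqslant \mathcal{N}_{p}-(p+1)\leqslant p$ required by the statement.

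For the irreducible case $C\not\equiv 0 \pmod{p}$, I would first check that $G(u,v):=u^{3}+v^{3}-u-v-C$ is absolutely irreducible over $\overline{\mathbb{F}}_{p}$. A factorization of a cubic must involve a linear factor $\alpha u+\beta v+\gamma$; substituting the relation it defines into $G$ and comparing coefficients forces $\beta^{3}=\alpha^{3}$, $\gamma=0$, and $C=0$, contradicting the hypothesis. Absolute irreducibility in hand, the Hasse--Weil theorem applied to the projective closure (a plane cubic of geometric genus at most $1$) yields $\bigl|\mathcal{N}_{p}^{\mathrm{proj}}-(p+1)\bigr|\leqslant 2\sqrt{p}$. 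Subtracting the at most three points at infinity (which come from cube roots of $-1$, analyzed via the top-degree piece $u^{3}+v^{3}$) gives a bound $\bigl|\mathcal{N}_{p}-(p+1)\bigr|\leqslant 2\sqrt{p}+O(1)$, which is absorbed into the stated inequality for $p\geqslant 5$.

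The main obstacle is the bookkeeping in Step 4: carefully verifying absolute irreducibility over $\overline{\mathbb{F}}_{p}$ rather than just $\mathbb{F}_{p}$, tracking the contribution of singular points (which can occur for the special values $C=\pm \tfrac{4}{3\sqrt{3}}$ when $3$ is a quadratic residue), and absorbing all the lower-order corrections from points at infinity into the uniform constants in the stated bound. This is precisely the kind of accounting carried out in \cite[Lemma 6.6]{ours}, so I would follow that template.
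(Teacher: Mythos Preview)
Your approach is correct and shares the paper's overall strategy---symmetrize, factor in the degenerate case, apply Hasse--Weil in the generic case---but the irreducible branch is handled differently. The paper does not stop at the plane cubic: after the equivalent substitution $R_1=x+y+1$, $R_2=x-y$ (your $u+v=2R_1$, $u-v=2R_2$, so the factorizations $(u+v)(u^{2}-uv+v^{2}-1)$ and $R_1(R_1^{2}+3R_2^{2}-1)$ coincide), it pushes through a further chain $S_i\to T_i\to Z_i$ to reach an explicit Weierstrass form $Z_1^{2}\equiv Z_2^{3}-3Z_2-2-27B_0^{2}$, computes the discriminant to isolate the singular values $B_0\equiv\pm 2/27$, and treats those nodal cubics by a direct rational parametrization before invoking Hasse's theorem only in its classical elliptic-curve formulation. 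Your direct appeal to Hasse--Weil for absolutely irreducible plane cubics is more streamlined and avoids the chain of substitutions; the paper's route is more hands-on, requiring nothing beyond the textbook Weierstrass bound. Two small corrections: the identity should read $x^{3}+\tfrac{3}{2}x^{2}+\tfrac{1}{2}x=\tfrac{1}{8}(u^{3}-u)$, hence $C=-24B$ rather than $-12B$; and your claim that the $O(1)$ from points at infinity is ``absorbed for $p\geqslant 5$'' is not literally true (subtracting up to three points at infinity yields $\mathcal{N}_p-(p+1)\geqslant -2\sqrt{p}-3$, not $-2\sqrt{p}$), but the paper's own accounting carries the same slack from the excluded locus $Z_2=-1$, and the downstream use in Lemma~\ref{Z-C theorem} is insensitive to such additive constants.
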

\begin{proof}

If we let $R_1=x+y+1$, $R_2=x-y$ and $B_0=12B-217$, then we may rewrite \eqref{reduced form} as 
\begin{align}
    R_1^3+3R_1R_2^2-R_1+B_0\equiv 0\bmod p.\label{R1 R2 conversion}
\end{align}

We consider cases when $B\equiv \frac{217}{12}\bmod p$, and  $B\not\equiv \frac{217}{12}\bmod p$. When $B\equiv \frac{217}{12}\bmod p$, \eqref{R1 R2 conversion} is reduced to
\begin{align}
    R_1(R_1^2+3R_2^2-1)\equiv 0\bmod p.\label{B0=0}
\end{align}
All the solutions to \eqref{B0=0} come from either $R_1\equiv 0\bmod p$ or $ R_1^2+3R_2^2-1\equiv 0\bmod p$. If both congruences are satisfied, we have 
\[
3R_2^2-1\equiv 0 \bmod p,
\]
which has at most 2 solutions. Note that $h(R_1, R_2) = R_1^2+3R_2^2-1$ is an irreducible non-singular curve over $\mathbb{F}_p$, so the number of solutions to $h(R_1, R_2)\equiv 0\bmod p$ is $p+1$. Thus, there are at most $2p+1$ and at least $2p-1$ solutions to \eqref{B0=0}. 

When $B\not\equiv \frac{217}{12}\bmod p$, we have $B_0\not\equiv 0\bmod p$, forcing $R_1\not\equiv 0\bmod p$. Therefore, multiplying \eqref{R1 R2 conversion} by $R_1^{-3}$, and writing $S_1=R_2R_1^{-1}, S_2=R_1^{-1}$, we get
\begin{align}
1+3S_1^2-S_2^2+B_0S_2^3&\equiv 0 \bmod p \label{S1 S2 conversion}.
\end{align}
We then multiply \eqref{S1 S2 conversion} by $27B^{2}_{0}$ and let $T_{1} = 9B_{0}S_{1}$ and $T_{2} = -3B_{0}S_{2}$ to obtain
\begin{align}\label{rewriting the congruence with T}
    T^{2}_{1} \equiv T^{3}_{2} + 3 T^{2}_{2} - 27B^{2}_{0} \bmod{p}.
\end{align}
Finally, we let $Z_{1} = T_{1}, Z_{2} = T_{2} + 1.$ Then \eqref{rewriting the congruence with T} becomes
\begin{align}\label{rewriting the congruence with Z}
    Z^{2}_{1} \equiv Z^{3}_{2} - 3 Z_{2} + \left(2 - 27B^{2}_{0} \right) \bmod{p},
\end{align}
which is an elliptic curve when
\begin{align}\label{the discriminant}
    -108+ 27 \left(2 - 27B^{2}_{0} \right)^{2} \not\equiv 0 \bmod{p}.
\end{align}
Note that \eqref{the discriminant} is not satisfied if and only if 
\begin{align}\label{j(Z_2)}
    j(Z_{2}) := Z^{3}_{2} - 3 Z_{2} + \left(2 - 27B^{2}_{0} \right) \bmod{p}
\end{align}
has at least two equal roots in the algebraic closure $\overline{\mathbb{F}}_{p}$ of $\mathbb{F}_{p}$. Suppose all roots to \eqref{j(Z_2)} are equal, denote by $\alpha$. Then,
\begin{align}\label{roots alpha}
    j(Z_{2})= (Z_{2} - \alpha)^{3}.
\end{align}
Equating the coefficients \eqref{roots alpha} and the right-hand side of \eqref{rewriting the congruence with Z}, we have $-3 \equiv 0 \bmod{p}$, which is a contradiction since $p \geqslant 5$ . 

Now suppose $j(Z_{2})$ has two roots, say $\alpha, \beta$ in the algebraic closure $\overline{\mathbb{F}}_{p}$ with $\alpha \neq \beta$ and $j(Z_{2}) = (Z_{2} - \alpha)^{2}(Z_{2} - \beta)$. Again, equating the coefficients, we obtain the system of equations
\begin{align}
    j(Z_{2})& \equiv (Z_{2} - \alpha)^{2}(Z_{2} + 2\alpha) \bmod{p}, \notag \\
    - 3& \equiv -3 \alpha^{2} \bmod{p} \notag, \\
    \text{ and }\quad 2 - 27B^{2}_{0}& \equiv 2 \alpha^{3} \bmod{p}. \label{alpha cubed}
\end{align}
Since $\alpha^2$ and  $\alpha^3$ are both invertible in $\F_p$, we must have $\alpha\equiv \pm 1\bmod p$. Substituting this into the third equation \mbox{of \eqref{alpha cubed}} forces either $B_0\equiv 0\bmod p$, which is a contradiction to our assumption, or $B_0\equiv\pm\frac{2}{27}\bmod p$. Therefore, if $B_0\not\equiv\pm \frac{2}{27}\bmod p$, the double root case does not exist, and so equation \eqref{rewriting the congruence with Z} is an elliptic curve. Otherwise, \eqref{rewriting the congruence with Z} is equivalent to
\begin{align}
Z_1^2 \equiv (Z_2-\alpha)^2(Z_2+2\alpha)\bmod p.\label{6.21}
\end{align} The number of solutions to \eqref{6.21} comes from either $Z_{2} \equiv \alpha \bmod{p}$ or $Z_{2} \not\equiv \alpha \bmod{p}.$ In the first case, when $Z_{2} \equiv \alpha \bmod{p},$ it forces $Z_{1} \equiv 0 \bmod{p}$, which implies $x = y$. Substituting $x = y$ in all variables, since $Z_{2} = T_{2} + 1$, we get
\begin{align}\label{getting what alpha equals}
    \alpha = T_{2} + 1 = -3B_{0} S_{2} + 1= \frac{-3B_{0}}{2x + 1} +1. 
\end{align}
\
Note that since we have supposed that $R_{1} \not\equiv 0 \bmod{p}, 2x + 1 \not\equiv 0 \bmod{p}.$ Therefore, \eqref{getting what alpha equals} is equivalent  to 
\begin{align*}
    x = \frac{1}{2} \left( \frac{-3B_{0}}{\alpha - 1} - 1 \right).
\end{align*}
This means that $x = y$ has a unique solution in $\mathbb{F}_{p}$, so the number of solutions in $\mathbb{F}_p$ to the first case is exactly $1$. In the latter case, when $Z_{2} \not\equiv \alpha \bmod{p},$ let $t = Z_{1}(Z_{2} - \alpha)^{-1}.$ Then \eqref{6.21} is equivalent to
\begin{align*}
    t^{2} \equiv Z_{2} + 2\alpha \bmod{p}.
\end{align*}

This is a nonsingular parabola, so the number of solutions is $p+1$. Summing the number of solutions of both cases, we conclude that when \eqref{rewriting the congruence with Z} is not an elliptic curve, \mbox{$\mathcal{N}_p = p+2$}. When equation \eqref{rewriting the congruence with Z} is an elliptic curve, we apply the Hasse-Weil Theorem, established \mbox{in \cite{hassecongruentfunctions},  \cite{hassecomplexmultiplication}, \cite{hassefiniteorder}, and \cite{weil1949},} and conclude that $\lvert\mathcal{N}_{p}-(p+1)\rvert\leqslant 2\sqrt{p}$. 

Combining all possible cases, we see that $\mathcal{N}_{p}$ satisfies 
\[
-2\sqrt{p}\leqslant\mathcal{N}_{p}-(p+1)\leqslant p.
\]

\end{proof}

Now we find Lehmer points satisfying the conditions in \eqref{final conditions} through an explicit version of a result by Cobeli and the fourth author \cite[Theorem 1]{cobeli}. We begin with notation \mbox{following \cite{cobeli}}. Let $1 \leqslant L_{0} \leqslant 4$ be such that $L \equiv L_{0} \bmod 4$, and  
\begin{align}\label{Curve definition}
\mathcal{C}: f(x_1)+f(x_2)-L=0
\end{align}
be an algebraic curve over $\mathbb{F}_{p}$.  Define the vectors
\begin{align}
\textbf{x}& := \begin{bmatrix}
           x_1 \\
           x_2 \\
         \end{bmatrix},
    \textbf{a} := \begin{bmatrix}
           4 \\
           4 \\
         \end{bmatrix},
    \textbf{b} := \begin{bmatrix}
           \ell_0 \\
           r_0 \\
         \end{bmatrix},
    \textbf{t}:= \begin{bmatrix}
         t_1 \\
          t_2 \\
         \end{bmatrix},
         \textbf{g}:= \begin{bmatrix}
         g_1 \\
          g_2 \\
         \end{bmatrix}. \label{vectors}
\end{align}

The set of Lehmer points with respect to prime $p$ and vectors $\textbf{a}$ and $\textbf{b}$ is defined as 
\begin{align}\label{Lehmer Point Definition}
\mathcal{L}(p,\mathcal{C},\textbf{a},\textbf{b}) := \{\textbf{x} : x_1 \equiv \ell_0\bmod 4, x_2\equiv r_0 \bmod 4, (x_1,x_2)\in\mathcal{C}, 0\leqslant x_1, x_2<p\} . 
\end{align}
 
 Consider the intervals $U_j = ((t_j-g_j)p, (t_j+g_j)p)$ for prime $p$, the vectors $\textbf{t}$ and $\textbf{g}$, and $j= 1,2$. Then the distribution function of the corresponding Lehmer points is
\begin{align}\label{Lehmer Distribution function}
F(p,\mathcal{C},\textbf{a},\textbf{b};\textbf{g},\textbf{t}):=\#\{\textbf{x}\in \mathcal{L}(p,\mathcal{C},\textbf{a},\textbf{b}):  x_j\in U_j, 0\leqslant t_j\pm g_j\leqslant 1, j=1,2 \}.
\end{align}

According to the above setup, we have the following lemma.
\begin{lem} \label{Z-C theorem}
Let $p$ be a prime and let $\mathcal{C}$, $\textnormal{\textbf{a}, \textbf{b}, \textbf{g}, \textbf{t}}$ and $ F$ be as defined in \eqref{Curve definition}, \eqref{vectors}, \eqref{Lehmer Point Definition} and \eqref{Lehmer Distribution function}. Then for $p\geqslant 10^{10}$,
\begin{align*}
F(p,\mathcal{C},\textnormal{\textbf{a}, \textbf{b}; \textbf{g}, \textbf{t}}) = \frac{g_1g_2}{4}(p+1)+ \mathcal{E},
\end{align*}
where $|\mathcal{E}| \leqslant 6.0009\sqrt{p}\log^2 p$.
\end{lem}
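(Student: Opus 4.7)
The plan is to adapt the Fourier-analytic strategy of Cobeli and the fourth author \cite{cobeli} to the cubic curve $\mathcal{C}$ of \eqref{Curve definition}, while tracking every numerical constant carefully so as to reach the stated bound. The proof rests on two ingredients: orthogonality for the defining conditions of a Lehmer point, and Bombieri's estimate for exponential sums along curves, combined with the point-count $\mathcal{N}_p$ already controlled in Lemma \ref{elliptic curve satisfied}.

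First I would encode each defining condition of a Lehmer point via additive characters. The mod-$4$ congruences $x_j\equiv b_j \pmod{4}$ are expanded by orthogonality,
$\mathbf{1}_{x\equiv b\,(\bmod\,4)}=\tfrac{1}{4}\sum_{k=0}^{3}e(k(x-b)/4)$, while each interval indicator $\mathbf{1}_{U_j}$ on $\{0,\ldots,p-1\}$ is expanded in a finite Fourier series mod $p$ with coefficients $\hat f_j(h)=\sum_{y\in U_j\cap\mathbb{Z}}e(-hy/p)$, satisfying $\hat f_j(0)=2g_j p+O(1)$ and $|\hat f_j(h)|\leqslant 1/(2\|h/p\|)$ for $h\neq 0$. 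Inserting both expansions into \eqref{Lehmer Distribution function} rewrites $F$ as a quadruple sum whose inner ingredient is the exponential sum
\[
S(k_1,k_2;h_1,h_2)=\sum_{(x_1,x_2)\in\mathcal{C}(\mathbb{F}_p)} e\!\left(\tfrac{k_1 x_1+k_2 x_2}{4}+\tfrac{h_1 x_1+h_2 x_2}{p}\right),
\]
evaluated using the minimal non-negative lifts of $x_1,x_2$ in $\{0,\ldots,p-1\}$.

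Next, I would extract the main term. The contribution of $(k_1,k_2,h_1,h_2)=(0,0,0,0)$ is $\tfrac{1}{16p^{2}}\hat f_1(0)\hat f_2(0)\mathcal{N}_p$. Using $\mathcal{N}_p=p+1+O(\sqrt p)$ from Lemma \ref{elliptic curve satisfied}, together with $\hat f_j(0)=2g_j p+O(1)$, this evaluates to $\tfrac{g_1 g_2}{4}(p+1)$ up to an error of size $O(\sqrt p)$, which is absorbed in $\mathcal{E}$.

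The crux of the proof is bounding the off-diagonal contribution. Since $e(k_jx_j/4)\cdot e(h_jx_j/p)=e((pk_j+4h_j)x_j/(4p))$ and $\gcd(4,p)=1$, each non-trivial pair $(k_j,h_j)$ corresponds via the Chinese remainder theorem to a frequency in $\mathbb{Z}/4p\mathbb{Z}$. Splitting the summation by residue class mod $4$, the resulting $\mathbb{F}_p$-character sums along $\mathcal{C}(\mathbb{F}_p)$ are estimated by Bombieri's theorem, using that $\mathcal{C}$ is absolutely irreducible of degree at most $3$, as verified en route to Lemma \ref{elliptic curve satisfied}; this yields $|S(k_1,k_2;h_1,h_2)|\leqslant C\sqrt p$ for an explicit $C\leqslant 6$, save in the degenerate cases (e.g.\ $B\equiv 217/12\pmod p$), which are handled directly from the count in Lemma \ref{elliptic curve satisfied}. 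Summing over $(h_1,h_2)$ via the standard Euler--Maclaurin-type bound $\sum_{h=1}^{p-1}1/\|h/p\|\leqslant p(\log p+1)$, and over the $16$ residue classes mod $4$, produces the $\sqrt p \log^2 p$ error shape.

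The main obstacle is squeezing the explicit constant down to $6.0009$. This requires combining the Bombieri estimate with a sharp form of the Euler--Maclaurin bound on $\sum 1/\|h/p\|$, accounting precisely for the residual contributions from the degenerate cases of Lemma \ref{elliptic curve satisfied}, and verifying that the hypothesis $p\geqslant 10^{10}$ is large enough to absorb the $O(\sqrt p)$ from the main term and the lower-order contributions into $0.0009\sqrt p\log^2 p$. Combining these pieces yields the asserted bound $|\mathcal{E}|\leqslant 6.0009\sqrt p\log^2 p$.
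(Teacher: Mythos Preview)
Your overall strategy—Fourier analysis à la Cobeli–Zaharescu together with Bombieri's bound for character sums along $\mathcal C$ and the point count from Lemma \ref{elliptic curve satisfied}—is exactly the paper's approach (the paper simply refers to \cite[Lemma~9.1]{ours}). The main term extraction, the use of Bombieri with $d_1=3$, $d_2=1$ to obtain $|S|\leqslant 6\sqrt p+9\leqslant 6.0001\sqrt p$, and the summation over frequencies producing $\log^2 p$ all match.

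There is, however, a concrete gap in the way you handle the mod-$4$ congruences. You expand $\mathbf 1_{x_j\equiv b_j\,(4)}$ via characters of $\mathbb Z/4\mathbb Z$ and the interval indicators via characters of $\mathbb F_p$, and then claim that after ``splitting the summation by residue class mod $4$'' the inner sums become $\mathbb F_p$-character sums along $\mathcal C(\mathbb F_p)$ to which Bombieri applies. But restricting the lift $x_j\in\{0,\ldots,p-1\}$ to a residue class mod $4$ is not an algebraic condition on $\mathcal C(\mathbb F_p)$, so those restricted sums are not the sums Bombieri bounds. Equivalently, $e(kx/4)$ (as a function of the canonical lift) is not an additive character of $\mathbb F_p$, and your CRT remark does not convert it into one: the sum runs over $\{0,\ldots,p-1\}$, not over $\mathbb Z/4p\mathbb Z$. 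The fix used in \cite{cobeli} and \cite{ours} is to \emph{combine} the congruence $x_j\equiv b_j\pmod 4$ with the interval condition into a single indicator $\chi_{\mathcal Q}$ on $\{0,\ldots,p-1\}$, regard it as a function on $\mathbb F_p$, and Fourier-expand there. Because $\chi_{\mathcal Q}$ is supported on an arithmetic progression of step $4$, after the change of variable $u\equiv 4z\pmod p$ one gets $|\hat\chi_{\mathcal Q}(z)|\leqslant \min\{m_j/p,\,1/(2|u|)\}$, and the inner sums $\sum_{(x_1,x_2)\in\mathcal C}e_p(z_1x_1+z_2x_2)$ are genuine $\mathbb F_p$-character sums. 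With this correction your argument goes through and yields exactly the stated $6.0009\sqrt p\log^2 p$ bound.
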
\label{ZC theorem}
\begin{rem}
    If $\mathcal{C}$ is reducible over $\mathbb{F}_p$ as a union of a line and a conic, then take $\mathcal{C}$ in the statement of Lemma \ref{ZC theorem} to be the conic part. By the discussion in the proof of \mbox{Lemma \ref{elliptic curve satisfied}}, $\mathcal{C}$ can never be factored as a product of linear factors. 
\end{rem}
\begin{proof}
     The lemma is essentially an explicit version of \cite[Theorem 1]{cobeli}. The proof proceeds by Fourier expansion of the congruence-restricted interval indicators and separates the contribution of the zero frequency from the nonzero frequencies. The zero frequency gives the main term, while the nonzero frequencies are bounded using Bombieri-Weil estimate for the exponential sums along the curve, yielding the desired error term. One can see detailed proof in \cite[Lemma~9.1]{ours}.  
\end{proof}

We are now ready to prove Lemma \ref{sum of eight square pyramidal number}. The first step is to obtain an upper bound for a positive integer $m$ that can be written as a sum of eight square pyramidal numbers. 
\begin{lem}\label{main lemma}
    Let $m$ be a positive integer. Then, for any $m\geqslant \num{1.907e28}$, $m$ can be written as a sum of at most eight square pyramidal numbers. 
\end{lem}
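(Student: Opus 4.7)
The plan is to combine the arithmetic reduction from the previous subsection with the Lehmer-point count from Lemma \ref{Z-C theorem}. Given a sufficiently large $m$, I will exhibit a prime $p$ such that taking $x = p-1$ produces a pair $(\ell, r)$ fulfilling all the requirements in \eqref{final conditions}. The residue $(L - f(\ell) - f(r))/(4p)$ will then be a positive integer bounded above by roughly $\tfrac{3}{2}x^2$ via Condition 1, and Gauss's Eureka theorem furnishes a triple of triangular numbers $(a,b,c)$ with $a,b,c < x$ that completes the decomposition \eqref{sum of six pyramidal numbers}, yielding the required eight-term representation of $m$.

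\textbf{Step 1 (locating the prime).} Fix constants $t > 0$ and $\varepsilon > 0$ with $t - \varepsilon > 12$ so that \eqref{Alpha Range Upper} and \eqref{Alpha Range Lower} admit a common solution for $\alpha$. For $m$ large enough, invoke a short-interval prime existence result (for instance Nagura's theorem, which guarantees a prime in $[n, \tfrac{6}{5}n]$ for $n \geqslant 25$) to obtain a prime $p$ satisfying $(t - \varepsilon) p^3 \leqslant m \leqslant (t+\varepsilon) p^3$; feasibility amounts to $\bigl(\tfrac{t+\varepsilon}{t-\varepsilon}\bigr)^{1/3} \geqslant \tfrac{6}{5}$, which constrains the ratio $\varepsilon/t$ from below but is easy to accommodate.

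\textbf{Step 2 (invoking the Lehmer-point count).} Read off $\ell_0, r_0 \in \{0,1,2\}$ from $L \bmod 4$ as in \eqref{first congruence}. Choose a small $\delta > 0$ and a common interval centre
\[
\alpha \in \Bigl(\bigl(\tfrac{3t}{2} - \tfrac{3\varepsilon}{2} - 12\bigr)^{1/3} + \delta,\ \bigl(\tfrac{3t}{2} + \tfrac{3\varepsilon}{2} - 3\bigr)^{1/3} - \delta\Bigr),
\]
and set $\textbf{t} = (\alpha, \alpha)$, $\textbf{g} = (\delta, \delta)$ in the notation of Lemma \ref{Z-C theorem}. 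By Lemma \ref{elliptic curve satisfied}, the curve $f(\ell) + f(r) \equiv L \pmod{p}$ is either an elliptic curve, or a line–conic union whose conic component qualifies for Lemma \ref{Z-C theorem} via the attached remark. In either case Lemma \ref{Z-C theorem} yields a Lehmer-point count
\begin{equation*}
F = \tfrac{\delta^2}{4}(p+1) + \mathcal{E}, \qquad |\mathcal{E}| \leqslant 6.0009\sqrt{p}\log^2 p,
\end{equation*}
which is strictly positive once $p$ exceeds an explicit threshold $p_0 = p_0(\delta)$ determined by $\tfrac{\delta^2}{4}(p+1) > 6.0009\sqrt{p}\log^2 p$. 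Any Lehmer point produced by $F$ supplies the desired $(\ell, r)$.

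\textbf{Step 3 (explicit threshold).} Translating $p \geqslant \max(p_0, 10^{10})$ back through $m \geqslant (t-\varepsilon) p^3$ produces an explicit lower bound on $m$. The main obstacle is the numerical balancing of the three free parameters $t$, $\varepsilon$, and $\delta$: shrinking $\delta$ inflates $p_0$ roughly like $\delta^{-4}(\log \delta^{-1})^4$ through the above inequality, while enlarging $\delta$ tightens the window for $\alpha$ in \eqref{Alpha Range Lower}--\eqref{Alpha Range Upper} and hence forces larger $t$ (and so larger $m$). Optimizing these competing constraints against the hypothesis $p \geqslant 10^{10}$ of Lemma \ref{Z-C theorem}, and finally substituting into $m \geqslant (t - \varepsilon) p_0^3$, should produce the explicit threshold $1.907 \times 10^{28}$ stated in the lemma.
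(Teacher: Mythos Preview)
Your overall strategy matches the paper's proof: choose a prime $p$ in a cube-root window around $m$, set $x=p-1$, and invoke Lemma~\ref{Z-C theorem} to produce a Lehmer point $(\ell,r)$ satisfying \eqref{final conditions}. However, two concrete choices in your proposal prevent you from reaching the stated threshold $1.907\times 10^{28}$.

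First, your requirement $t-\varepsilon>12$ is both unnecessary and fatal for the constant. The lower endpoint in \eqref{Alpha Range Upper} is a \emph{real} cube root, so the quantity $\tfrac{3t}{2}-\tfrac{3\varepsilon}{2}-12$ is permitted to be negative; this simply pushes the lower bound for $\alpha$ below zero and makes the window easy to hit. The paper exploits this by taking $t=3.17$, $\varepsilon=\delta=0.5$, $\alpha=0.5$, so that $\bigl(\tfrac{3t}{2}-\tfrac{3\varepsilon}{2}-12\bigr)^{1/3}\approx -2$. With your constraint $t-\varepsilon>12$ (hence $t\gtrsim 16$) together with the hypothesis $p\geqslant 10^{10}$ from Lemma~\ref{Z-C theorem}, you would be forced to $m\geqslant (t-\varepsilon)p^3\gtrsim 12\times 10^{30}$, three orders of magnitude above the target.

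Second, Nagura's theorem is too crude here: the ratio condition $\bigl(\tfrac{t+\varepsilon}{t-\varepsilon}\bigr)^{1/3}\geqslant \tfrac{6}{5}$ forces $\varepsilon/t$ large, which in turn inflates $t$ and the threshold. The paper instead appeals to Rosser--Schoenfeld \cite[Theorem~2]{rosser1962}, which places a prime in the much narrower window $\bigl[(t+\varepsilon)^{-1/3}m^{1/3},\,(t-\varepsilon)^{-1/3}m^{1/3}\bigr]$ for the chosen $t,\varepsilon$. With those parameters the inequality $\delta^2(p+1)>24.0036\sqrt{p}\log^2 p$ is satisfied once $p\geqslant 1.9257\times 10^{9}$, and translating back gives $m\geqslant 1.907\times 10^{28}$. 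Your sketch of Step~3 is otherwise correct, but ``should produce'' is doing real work that, with your parameter choices, it cannot do.
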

\begin{proof} 
    We choose $t=3.17,\varepsilon=\delta=0.5$ and
\[\alpha = 0.5\in\bigg(\bigg(\frac{3t}{2}  - \frac{3\varepsilon}{2}-12\bigg)^{1/3}+\delta,\bigg(\frac{3t}{2}  + \frac{3\varepsilon}{2}-3\bigg)^{1/3}-\delta\bigg).\]
These parameters are carefully chosen to fulfill both Conditions 1 and 2. We first focus on Condition 2. \\\mbox{Let $t_1=t_2=\alpha$} and $g_1=g_2=\delta$.  Since  $(\alpha\pm\delta)\in [0,1]$, by Lemma \ref{Z-C theorem}, the number of Lehmer points $(\ell,r)$ that satisfying conditions laid out in \eqref{final conditions} is
\[F(p,\mathcal{C},\textbf{a},\textbf{b};\textbf{g}, \textbf{t})=\frac{\delta^2(p+1)}{4}+E^*,
\]
where $|E^*| <6.0009\sqrt{p}\log^2p$. Then, in order to have $F(p,\mathcal{C},\textbf{a},\textbf{b};\textbf{g}, \textbf{t})>0$, it suffices to have 
\begin{align}\label{Prime Requirement}
\delta^2(p+1)&>24.0036\sqrt{p}\log^2p.
\end{align}
When $m\geqslant \num{1.907e28}$, by \eqref{p cubed range}, we have 
\[
    \frac{m}{t-\varepsilon} \geqslant \frac{\num{1.907e28}}{2.67} \geqslant p^3,
\]
and thus, $p\geqslant \num{1.9257e9}$, which \mbox{satisfies \eqref{Prime Requirement}.} Therefore, when $m\geqslant \num{1.907e28}$, if we can find a prime $p$ such that 
\[\frac{1}{\sqrt[3]{t+\varepsilon}}\sqrt[3]{m}\leqslant p \leqslant \frac{1}{\sqrt[3]{t-\varepsilon}}\sqrt[3]{m},
\]
then the existence of $(\ell,r)$ satisfying \eqref{final conditions} is guaranteed by the above discussion, and Condition 2 will be met. In fact, Theorem 2 in Rosser and Schoenfeld's paper \cite{rosser1962} guarantees such a $p$. To fulfill Condition 1, it suffices to achieve the alternative form in \eqref{Alternative form of condition 1}. With the chosen $t,\varepsilon,\delta$ and $\alpha$, applying the lower and upper bounds \mbox{of $f(\ell)+f(r)$} in \eqref{f(l)+f(r) upper bound} and \eqref{flfr lower bound}, we see that when $p\geqslant \num{1.9257e9}$, the chosen parameters guarantee \mbox{us \eqref{Alternative form of condition 1}.} Thus, we conclude that any integer $m\geqslant \num{1.907e28}$ can be written as a sum of eight square pyramidal numbers.
\end{proof}

To this end, we can prove Lemma \ref{sum of eight square pyramidal number} by use of Lemma \ref{main lemma} and the following algorithm. 
\begin{proof}[Proof of Lemma \ref{sum of eight square pyramidal number}]
When $m\geqslant\num{1.907e28}$, by Lemma \ref{main lemma} the result is immediate, so assume that $m<\num{1.907e28}$. If $m$ is a square pyramidal number, then the theorem follows trivially. Otherwise, consider the following steps.\\   
\vspace{-0.75cm}
\subsection*{Step 1.} Let $c$ be a positive constant and $n$ be the integer such that $f(n)< m< f(n+1)$. If $m\in (f(n),f(n)+c)$, then let $m_1=m-f(n-1)$. Otherwise, let $m_1=m-f(n)$.
\subsection*{Step 2.} Take $m - m_1$ to be the first term in the sum. Reduce the problem to writing $m_1$ as a sum of one fewer square pyramidal numbers. 
\subsection*{Step 3.} Repeat Steps 1 and 2 with $m$ replaced by $m_1$ until the problem is reduced to writing the number as a sum of 4 square pyramidal numbers.\\
\indent We apply the above steps with $c=\num{1.3e7}$ to some positive integer $m < \num{1.907e28}$.\\  If $m\in [f(n)+c, f(n+1))$ for some $n\in \mathbb{N}$, then let $m_1=m-f(n)$. We deduce that 
\begin{align}
       & m_1 < f(n+1)-f(n)= n^2+2n+1.\label{m_1 bound}
\end{align}

Since $m<\num{1.907e28}$, 
\[
g(n) = 2n^3+3n^2+n<\num{1.907e28},
\]
which implies that $n<\num{3.854e9}.$ Using \eqref{m_1 bound}, we conclude that $m_1<\num{1.485e19}$. Alternatively, \mbox{when $m\in (f(n),f(n)+c)$,} let $m_{1} = m - f(n-1)$, and we obtain
\begin{align}
m_1 < f(n)+\num{1.3e7}-f(n-1) = n^2+\num{1.3e7}.\label{m_1 bound2}
\end{align}

Again, {since $m<\num{1.907e28}$}, we may conclude that $m_1<\num{1.4848e19}$. Therefore, in both cases, it suffices to write $\num{1.3e7}\leqslant m_1\leqslant \num{1.4848e19}$ as a sum of 7 square pyramidal numbers.

By iterating the above steps, we reduce the original problem to writing a positive integer $m_4$, such\\ \mbox{that $\num{1.3e7}\leqslant m_4\leqslant 15264785$,} as a sum of 4 square pyramidal numbers. This was confirmed by Algorithm I in \cite{github}. 

For positive integers less than $\num{1.3e7}$, we repeat step 1 and 2 three times with $c=10^5$ \mbox{and $m=\num{1.3e7}$.} With modified algorithm I from \cite{github}, we can again confirm that all integers in the \mbox{range $[10^5,\num{1.1244e9}]$} can be written as a sum of five square pyramidal numbers, and thus show that all integers in the \mbox{range $[10^5, \num{1.3e7}]$} can be written as a sum of at most eight square pyramidal numbers. Finally, for integers smaller \mbox{than $10^5$,} we check them using Algorithm II from \cite{github}.
\end{proof}
\section{Setup for Lemma \ref{Lemma: Representation}}\label{sec: initial setup}
For the remaining sections, we are devoted to proving Lemma \ref{Lemma: Representation}. In this section, we will employ the setup of the Hardy--Littlewood method to prove the general version of Lemma \ref{Lemma: Representation}, with $m$ equal to any arbitrarily large integer.
For $n\in\N$, recall in \eqref{defn: square pyramidal number} that the $n$-th square pyramidal number is defined by
\[
S_n = \frac{2n^3+3n^2+n}{6}.
\]

For $s \in \mathbb{N}$, let $\mathcal{C}_s(m)$ denote the number of ways of writing $m$ as the sum of $s$ square pyramidal numbers. \mbox{For $N \in \mathbb{N}$, $\alpha \in \mathbb{R}$,} consider the sum
\begin{align}\label{Vinogradov's Sum}
f_N(\alpha,S) = \sum_{n=1}^{N} e(\alpha S_n).
\end{align}

For ease of notation, we let $f(\alpha) = f_N(\alpha,S)$, with $N$ being clearly implied. We let
\begin{align}\label{Defining N}
N =  \left\lceil  (3m)^{\frac{1}{3}} \right\rceil +1, \quad P=N^\delta, 
\end{align}
where $\delta \in \mathbb{R}$ such that $N^{3\delta-3}<\frac{1}{2}$. For $1 \leqslant a \leqslant q \leqslant P$ and $(a, q)=1$, let
\[
\mathfrak{M}(q, a)=\left\{\alpha:|\alpha-a / q| \leqslant N^{\delta-3}\right\} .
\]
We call $\mathfrak{M}(q, a)$ the \textit{major arcs}, which are pairwise disjoint due to the restriction on $\delta$. \mbox{Let $\mathfrak{M}$} denote the union of the $\mathfrak{M}(q, a)$'s. Let $\mathfrak{U}=\left(N^{\delta-3}, 1+N^{\delta-3}\right]$ 
and define the \textit{minor arcs} \mbox{as $\mathfrak{m}=\mathfrak{U} \backslash \mathfrak{M}$.} Then Cauchy's integral formula yields
\begin{align}\label{Major+Minor Arcs}
     \mathcal{C}_s(m) = \int_{\mathfrak{M}} f(\alpha)^s e(-\alpha m) \, d \alpha+\int_{\mathfrak{m}} f(\alpha)^s e(-\alpha m) \, d \alpha.
\end{align}
 
 We will treat the minor arcs in Section \ref{Minor Arcs} and address the major arcs in Sections \ref{sec: Major Arcs}, \ref{sec: Major Arcs II} and \ref{sec: Major Arcs Sec III}.

\section{Minor Arcs}\label{Minor Arcs}
Our primary goal is to prove Lemma \ref{Minor Arcs : thm}, which obtains an explicit upper bound for the integral over the minor arcs $\mathfrak{m}$.
\begin{lem}\label{Minor Arcs : thm}
Let $f(\alpha)$ be as defined in \eqref{Vinogradov's Sum}. Then for $s \geqslant 9$ and $m \geqslant 10^{10}$, we have 
\[
\int_{\mathfrak{m}} f(\alpha)^s e(-m \alpha) \, d \alpha \leqslant 152 \cdot 13^{s-8} \cdot (\log m)^{\frac{s-8}{4}}m^{\frac{s}{3}-1-\frac{\delta(s-8)}{12}+\frac{0.53305(s-8)+6.3966}{1.2\log \log m}}.
\]
\end{lem}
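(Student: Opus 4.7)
My approach is the standard Hardy--Littlewood minor-arcs dissection executed with explicit constants. The opening move is to factor the integrand and extract a supremum:
\[
\Bigl|\int_{\mathfrak{m}} f(\alpha)^{s}\, e(-m\alpha)\,d\alpha\Bigr|
\;\leqslant\;\Bigl(\sup_{\alpha\in\mathfrak{m}}|f(\alpha)|\Bigr)^{s-8}\int_{0}^{1}|f(\alpha)|^{8}\,d\alpha.
\]
Two inputs then close the proof: an explicit Weyl-type bound on the supremum and an explicit Hua-type eighth moment on $[0,1]$. Both are classical for the cubic polynomial $S_n=\tfrac13 n^{3}+\tfrac12 n^{2}+\tfrac16 n$; the bulk of the effort is tracking every constant.

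For the Weyl piece, I would apply Dirichlet's theorem to $3\alpha$ (since the leading coefficient of $S_n$ is $1/3$) to obtain, for each $\alpha\in\mathfrak{m}$, a reduced fraction $a/q$ with $P<q\leqslant N^{3}/P$; the lower bound comes from $\alpha\notin\mathfrak{M}$. An explicit cubic Weyl inequality, produced by two successive squaring steps on the exponential sum, then yields a bound of the form $|f(\alpha)|\leqslant 13\, N^{1-\delta/4}\,d(q)^{1/4}$, so raising to the $(s-8)$-th power produces the factor $13^{s-8}$ together with a divisor factor to be absorbed later. For the Hua piece, I would iterate the Weyl differencing step three times, reducing $\int|f|^{8}$ to a weighted count of solutions to $S_{n_1}+S_{n_2}+S_{n_3}+S_{n_4} = S_{m_1}+S_{m_2}+S_{m_3}+S_{m_4}$ and estimating the innermost divisor sum, to obtain $\int_{0}^{1}|f(\alpha)|^{8}\,d\alpha\leqslant 152\cdot N^{5}$ times a second divisor factor, with the constant $152$ collecting the contributions from the three descent steps.

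The two divisor factors are then absorbed simultaneously via the explicit Nicolas--Robin bound $d(n)\leqslant n^{1.066/\log\log n}$ for $n\geqslant 3$. This produces an additive error exponent of the shape $\tfrac{c_1(s-8)+c_2}{\log\log N}$; substituting $N\leqslant (3m)^{1/3}+2$ then converts every power of $N$ into a power of $m$ and rescales $\log\log N$ to $1.2\log\log m$, where the hypothesis $m\geqslant 10^{10}$ absorbs the lower-order slack. The constants $0.53305$, $6.3966$, and $1.2$ in the final exponent emerge from this rescaling, and the leading factor $152\cdot 13^{s-8}$ is simply the product of the two constants obtained above.

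The main obstacle is numerical bookkeeping rather than conceptual: the explicit constants $13$, $152$, $0.53305$, and $6.3966$ must each be extracted cleanly from two separate classical estimates using the same Nicolas--Robin normalisation, and the bound must hold uniformly for all $s\geqslant 9$. A secondary subtlety is that the Dirichlet approximation applied to $3\alpha$ has denominator differing from the natural denominator of $\alpha$ by at most a factor of $3$, which must be tracked so that the minor-arc condition ``$q>P$'' is preserved up to an absolute factor that can be absorbed into the constant $13$.
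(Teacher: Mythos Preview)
Your approach is the paper's: the same factorisation $(\sup_{\mathfrak m}|f|)^{s-8}\int_0^1|f|^8$, an explicit cubic Weyl bound for the supremum, an explicit Hua bound for the eighth moment, and a final $N\to m$ conversion. The paper differs only cosmetically: it applies Dirichlet to $\alpha$ itself and absorbs the leading $1/3$ by taking $\eta=3$ in its Weyl lemma (rather than approximating $3\alpha$), and it imports both the Weyl and Hua estimates as pre-packaged lemmas from a companion paper, with the constants $0.53305$ and $6.3966$ already encoded.

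There is one small bookkeeping gap. Your stated Weyl bound $|f(\alpha)|\leqslant 13\,N^{1-\delta/4}\,d(q)^{1/4}$ is missing a separate $(\log q)^{1/4}$ factor. After the two squaring steps the inner sum is $\sum_{|h|\leqslant N^{2}} r(h)\min\bigl(N,\|6\alpha_{3}h\|^{-1}\bigr)$; the divisor weight $r(h)\leqslant d(h)$ is what Nicolas--Robin handles (and is the source of the $0.53305$, since $d(h)^{1/4}\leqslant N^{0.53305/\log(2\log N)}$ for $h\leqslant N^{2}$), but the remaining sum $\sum_{h}\min(N,\|\cdot\|^{-1})\ll (N^{2}/q+1)(N+q\log q)$ contributes an independent $\log q$. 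It is this $(\log q)^{1/4}\leqslant (3\log N)^{1/4}\ll(\log m)^{1/4}$, not the divisor bound, that produces the factor $(\log m)^{(s-8)/4}$ in the target inequality. Once you retain it (and note the divisor function lives on $h$, not on $q$), your argument and the paper's coincide.
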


 In this section, we prove some preliminary lemmas that are necessary in our treatment of the minor arcs. We also utilize results obtained by Basak and three of the authors \cite{ours}. For the sake of completeness, when appropriate, we will reproduce the proofs here and highlight the key differences in our arguments. 
\begin{lem}[{\cite[Lemma~3.6]{ours}}]\label{Weyl Inequality V2}
Let $\eta \geqslant 1$ be fixed and $X \geqslant e^3$. Let $\alpha_1, \alpha_2, \alpha_3 \in \mathbb{R}$ and suppose there \mbox{exists $a \in \mathbb{Z}$} and $q \in \mathbb{N}$ which satisfy $(a, q)=1$ and $\left|\alpha_3-a / q\right| \leqslant \eta q^{-2}$. Then
\[
\bigg \lvert \sum_{1 \leqslant x \leqslant X} e\left(\alpha_1 x+\ldots+\alpha_3 x^3\right) \bigg \rvert \leqslant 2X^{\frac{3}{4}}+8X^{1+\frac{0.53305}{\log (2\log X)}}\eta^{\frac{1}{4}}\left(q^{-1}+X^{-1}+q X^{-3}\right)^{\frac{1}{4}} (\log q)^{\frac{1}{4}}.
\] 
\end{lem}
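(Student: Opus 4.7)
The plan is to prove Lemma \ref{Weyl Inequality V2} via the classical Weyl squaring argument specialized to cubic phases, carefully tracking absolute constants and invoking an explicit divisor bound in the spirit of Ramanujan--Robin to generate the $\log(2\log X)$ factor. Writing
\begin{equation*}
S := \sum_{1 \leqslant x \leqslant X} e\bigl(\alpha_1 x + \alpha_2 x^2 + \alpha_3 x^3\bigr),
\end{equation*}
the strategy is to bound $|S|^4$, which is natural because a cubic phase requires exactly $k-1 = 2$ Weyl differencings to linearize. Expanding $|S|^2 = \sum_{x,y} e(P(y)-P(x))$, setting $h_1 = y-x$, and then applying Cauchy--Schwarz to the resulting sum over $h_1$ yields
\begin{equation*}
|S|^4 \leqslant (2X-1) \sum_{|h_1|,|h_2|<X} \biggl| \sum_{x \in I(h_1,h_2)} e\bigl(\Delta_{h_2}\Delta_{h_1} P(x)\bigr) \biggr|,
\end{equation*}
where $I(h_1,h_2)$ is an interval of length at most $X$ and $\Delta_{h_2}\Delta_{h_1}P(x)$ is linear in $x$ with leading coefficient $6\alpha_3 h_1 h_2$, the lower-degree coefficients $\alpha_1,\alpha_2$ being annihilated by the double difference operator.

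Next I would isolate the diagonal contribution $h_1 h_2 = 0$, which after the geometric-series bound on the inner sum contributes at most $16X^3$ to $|S|^4$ and, on taking the fourth root, produces the $2X^{3/4}$ summand of the statement. For the off-diagonal $h_1 h_2 \neq 0$, the inner linear exponential sum is bounded by $\min(X, \tfrac{1}{2}\|6\alpha_3 h_1 h_2\|^{-1})$; substituting $m = 6 h_1 h_2$ with $|m| \leqslant 6X^2$, the representation count of a given $m$ is at most $6\,d(|m|)$. Here I would apply Robin's explicit bound $d(n) \leqslant n^{1.0661/\log\log n}$, valid for all $n \geqslant 3$ and in particular throughout the range $n \leqslant 6X^2$ thanks to the standing hypothesis $X \geqslant e^3$. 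Observing that $\log\log(6X^2) \geqslant \log(2\log X)$ in this range, the maximum divisor factor contributes at most $X^{2\cdot 1.0661/\log(2\log X)}$ to $|S|^4$; its fourth root is precisely $X^{0.53305/\log(2\log X)}$, the explicit exponent appearing in the statement.

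The last ingredient is the standard $\min$-sum estimate: under the rational approximation hypothesis $|\alpha_3 - a/q| \leqslant \eta/q^2$ with $(a,q)=1$, one has
\begin{equation*}
\sum_{1 \leqslant m \leqslant M} \min\bigl(X, \|\alpha_3 m\|^{-1}\bigr) \ll \eta\bigl(MX/q + M + q\log q + X\bigr),
\end{equation*}
a Vinogradov--Vaughan-type inequality proved by partitioning the $m$-range into blocks of length $q$ and exploiting that within each block the values $\|\alpha_3 m\|$ approximate $\{j/q : 0 \leqslant j < q\}$ up to $O(\eta)$. Inserting $M = 6X^2$ and collecting, this contributes a factor of order $X^3 \eta (q^{-1} + X^{-1} + qX^{-3}) \log q$ to $|S|^4$. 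Taking fourth roots of the combined bound and applying $(A+B)^{1/4} \leqslant A^{1/4} + B^{1/4}$ then delivers the claimed two-term inequality.

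The main obstacle is explicit bookkeeping rather than any conceptual difficulty. Each of the two Cauchy--Schwarz applications, the geometric-sum estimate, Robin's divisor bound, and the $\min$-sum estimate introduces absolute constants whose product must consolidate into the precise numerical factors $2$, $8$, and $0.53305$ of the statement. The hypothesis $X \geqslant e^3$ is presumably tuned so that Robin's inequality is simultaneously valid on the full range $n \leqslant 6X^2$ and that $\log\log(6X^2) \geqslant \log(2\log X)$, while also absorbing lower-order terms; verifying these thresholds alongside the explicit constants in the $\min$-sum step is where I would anticipate most of the concrete effort.
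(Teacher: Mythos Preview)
The paper does not supply its own proof of this lemma; it is stated with a citation to \cite[Lemma~3.6]{ours} and no argument is reproduced. Your sketch---two Weyl differencings to linearize the cubic phase, separation of the diagonal $h_1h_2=0$ contribution yielding $2X^{3/4}$, the substitution $m=6h_1h_2$ with representation multiplicity controlled by the Nicolas--Robin explicit divisor bound $d(n)\leqslant n^{1.0661/\log\log n}$ (whence $0.53305=1.0661/2$ after the fourth root), and the standard Vinogradov $\min$-sum estimate over blocks of length $q$---is exactly the classical route to an explicit Weyl inequality for cubics, and is almost certainly what the cited reference does. Your identification of the role of the hypothesis $X\geqslant e^3$ (to validate Robin's inequality on the full range and to ensure $\log\log(6X^2)\geqslant\log(2\log X)$) is also on target. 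There is nothing further to compare against in the present paper.
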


\par

\begin{lem}\label{Inductive Lemma}
Let $f(\alpha)$ and $N$ be as defined in \eqref{Vinogradov's Sum} and \eqref{Defining N}, respectively. Suppose \mbox{ $1 \leqslant j \leqslant 3$.} Then, for $N \geqslant e^3$, one has
\[
\int_0^1|f(\alpha)|^{2^j} \, d \alpha \leqslant 152 N^{2^j-j+\frac{6.3966}{\log \log N}}.
\]
\end{lem}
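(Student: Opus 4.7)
The plan is to proceed by induction on $j \in \{1, 2, 3\}$, with the base case handled by orthogonality and the inductive step by a Weyl-plus-Dirichlet argument.

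For the base case $j=1$, orthogonality on $[0,1]$ gives
\[
\int_0^1 |f(\alpha)|^2 \, d\alpha = \#\{(n_1,n_2) \in \{1,\ldots,N\}^2 : S_{n_1}=S_{n_2}\} = N,
\]
since $n \mapsto S_n$ is strictly increasing on $\mathbb{N}$. This is comfortably below the claimed bound $152\, N^{1+6.3966/\log\log N}$.

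For the inductive step $j \to j+1$ with $j+1 \in \{2, 3\}$, I would write
\[
\int_0^1 |f|^{2^{j+1}}\, d\alpha \leq \Bigl(\sup_{\alpha \in \mathfrak{B}} |f(\alpha)|\Bigr)^{2^{j}} \int_0^1 |f|^{2^{j}} \, d\alpha + \int_{\mathfrak{A}} |f|^{2^{j+1}} \, d\alpha,
\]
where the splitting $[0,1] = \mathfrak{A} \sqcup \mathfrak{B}$ comes from Dirichlet's approximation theorem at a scale $Q = N^{3-\kappa}$: the ``major'' set $\mathfrak{A}$ consists of $\alpha$ whose rational approximation has denominator $q \leq N^\kappa$, and $\mathfrak{B}$ is its complement. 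On $\mathfrak{B}$, Lemma \ref{Weyl Inequality V2} applied to the cubic phase $S_n = (2n^3+3n^2+n)/6$ (with $\eta=1$, after clearing the rational leading coefficient $1/3$) yields a pointwise Weyl bound of the form
\[
|f(\alpha)| \leq 2\,N^{3/4} + C\, N^{1+0.53305/\log(2\log N)}\, N^{-\kappa/4}(\log N)^{1/4}.
\]
Raising this to the $2^j$-th power and invoking the inductive bound on $\int_0^1 |f|^{2^j}\, d\alpha$ controls the first piece. The second piece is handled via the standard rational-approximation $f(a/q+\beta) = q^{-1}\mathfrak{S}(q,a)\, V(\beta) + O(\cdot)$ (the same ingredients as in Section \ref{sec: Major Arcs} below), which yields $|f(\alpha)| \lesssim N/q^{1/3}$ on $\mathfrak{A}$ and a convergent $q$-sum once $j \geq 1$.

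The main obstacle is not the structural argument --- this is Hua's lemma for cubic polynomials, proved by a standard template --- but the explicit tracking of constants. One must choose the Dirichlet parameter $\kappa$ so that after raising the Weyl slack factor $N^{0.53305/\log(2\log N)}$ to the appropriate power at each step, the cumulative slack in the final bound is exactly $N^{6.3966/\log\log N}$ (the coincidence $12 \cdot 0.53305 = 6.3966$ is suggestive of a power-$12$ accumulation across the two induction steps), while the multiplicative constant $152$ must absorb the prefactors $2,\, 8,\, C,\, (\log N)^{1/4}$ from Weyl together with the constant inherited from the previous step, all while remaining at $152$ overall.
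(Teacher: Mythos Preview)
Your inductive scheme has a genuine gap at the first step $j=1\to j=2$. To make the minor-arc piece
\[
\Bigl(\sup_{\mathfrak{B}}|f|\Bigr)^{2}\int_0^1|f|^{2}\,d\alpha \ll N^{2-\kappa/2+\varepsilon}\cdot N
\]
land at $N^{2+\varepsilon}$ you would need $\kappa\geqslant 2$. But then the major set $\mathfrak{A}$ (denominators $q\leqslant N^{\kappa}$) is essentially all of $[0,1]$, and even with the refined pointwise bound $|f(a/q+\beta)|\ll q^{-1/3}\min\{N,|\beta|^{-1/3}\}$ one only gets
\[
\int_{\mathfrak{A}}|f|^{4}\,d\alpha \ll \sum_{q\leqslant N^{\kappa}}\phi(q)\,\frac{N^{4}}{q^{4/3}}\cdot N^{-3}
= N\sum_{q\leqslant N^{\kappa}}q^{-1/3}\ll N^{1+2\kappa/3},
\]
which for $\kappa=2$ is $N^{7/3}$, not $N^{2+\varepsilon}$. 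No admissible $\kappa$ closes both pieces simultaneously, so the circle-method split cannot recover $\int|f|^{4}\ll N^{2+\varepsilon}$. (Your scheme \emph{would} work for the second step $j=2\to j=3$ once $\int|f|^4\ll N^{2+\varepsilon}$ is in hand, with $\kappa\approx 1$; the obstruction is purely at the fourth-moment level.)

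The argument the paper invokes (via \cite[Lemma~4.2]{ours}) is Hua's: one Weyl-differences one copy of $|f|^{2^{j}}$ down to linear exponential sums and integrates against the other copy, so that the mean value becomes a count of solutions to an auxiliary Diophantine equation; that count is then controlled by explicit divisor-function bounds of Nicolas--Robin type. This is also where the constant $6.3966$ actually comes from: $6.3966=6\times 1.0661$, with $1.0661$ the Nicolas--Robin exponent in $d(n)\leqslant n^{1.0661/\log\log n}$, not from twelve iterations of the Weyl factor $0.53305$ (the latter is itself $1.0661/2$, which is why your arithmetic coincidentally matches). Replacing your Dirichlet split at $j=1\to 2$ by the differencing-plus-divisor step fixes the proof and produces the stated exponent.
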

\begin{proof} 
One can bound the moments of the exponential sums by repeated Weyl differencing and orthogonality. \mbox{For $j=1$,} orthogonality gives the diagonal count; for $j=2$ and $j=3$, differencing reduces the moment estimates to counting polynomial equations, where the zero-difference cases are bounded directly and the nonzero cases are controlled by divisor bounds. Detailed proof is analogous to \cite[Lemma~4.2]{ours}.
\end{proof}

We are now ready to prove the lemma about estimation on the minor arcs. 
\begin{proof}[Proof of Lemma \ref{Minor Arcs : thm}] 
We can write
\[
\begin{aligned}
\left|\int_{\mathfrak{m}} f(\alpha)^s e(-m \alpha) \, d \alpha\right| & \leqslant\left(\sup _{\alpha \in \mathfrak{m}}|f(\alpha)|\right)^{s-8} \int_0^1|f(\alpha)|^{8}  \, d  \alpha.
\end{aligned}
\]

Consider an arbitrary point $\alpha$ of $\mathfrak{m}$. By Dirichlet's Theorem (see \cite[Lemma 2.1]{vaughan1997hardy}), there exist $a, q$ \mbox{with $(a, q)=1$} and $q \leqslant N^{3-\delta}$, such that $|\alpha-a / q| \leqslant q^{-1} N^{\delta-3}$.

Since $\alpha \in \mathfrak{m} \subset\left(N^{\delta-3}, 1-N^{\delta-3}\right)$ it follows that $1 \leqslant a \leqslant q$. Therefore $q>N^\delta$, for otherwise $\alpha$ would be \mbox{in $\mathfrak{M}$.} Applying Lemma \ref{Weyl Inequality V2} with $\psi(n)=\alpha S_n$ and $\eta = 3$, we obtain
\begin{align}
\lvert f(\alpha) \rvert &\leqslant  2N^{\frac{3}{4}}+8 \cdot (3)^{\frac{1}{4}} N^{1+\frac{0.53305}{\log \log N}}\left(q^{-1}+N^{-1}+q N^{-3}\right)^{\frac{1}{4}} (\log N)^{\frac{1}{4}} \notag \\
&\leqslant  2N^{\frac{3}{4}}+8 \cdot (9)^{\frac{1}{4}} N^{1-\frac{\delta}{4}+\frac{0.53305}{\log \log N}} (\log N)^{\frac{1}{4}} \notag \\
& \leqslant 16  N^{1-\frac{\delta}{4}+\frac{0.53305}{\log \log N}} (\log N)^{\frac{1}{4}}. \label{f_alpha bound dode}
\end{align}

  By Lemma \ref{Inductive Lemma}, we have
\begin{align}
\int_0^1|f(\alpha)|^{8} \, d \alpha \leqslant 152 N^{5+\frac{6.3966}{\log \log N}}. \label{f_alpha Integral Bound dode}
\end{align}

Combining \eqref{f_alpha bound dode} and \eqref{f_alpha Integral Bound dode}, we find
\[
\begin{aligned}
\left|\int_{\mathfrak{m}} f(\alpha)^s e(-m \alpha) \, d \alpha\right| &  \leqslant 152 \left(16 N^{1-\frac{\delta}{4}+\frac{0.53305}{\log \log N}}\right)^{s-8} (\log N)^{\frac{s-8}{4}} N^{5+\frac{6.3966}{\log \log N}} \\
& \leqslant 152 \cdot 13^{s-8} \cdot (\log m)^{\frac{s-8}{4}}m^{\frac{s}{3}-1-\frac{\delta(s-8)}{12}+\frac{0.53305(s-8)+6.3966}{1.2\log \log m}},
\end{aligned}
\]
where the last line follows from transforming the estimate involving $N$ into $m$ using \eqref{Defining N}. For simplicity, we use the bound $(3m)^{1/3}<N<2m^{1/3}$ here.
This completes the proof of Lemma \ref{Minor Arcs : thm}.
\end{proof}
\section{Major Arcs: Preliminaries}\label{sec: Major Arcs}
Our primary goal in this section is to assert Lemma \ref{Approx 4} so that we effectively approximate $f(\alpha)$ \mbox{when $\alpha\in\mathfrak{M}$.} To do so, we first define
\begin{align}
A(t) &:=  \sum_{1\leqslant n\leqslant t} e\bigg (\frac{a}{q}S_n \bigg), \label{Definition 1} \\
\textrm{and} \quad V(q,a) &:= \sum_{1\leqslant n\leqslant 6q} e\bigg (\frac{a}{q}S_n \bigg) \label{Definition of V(q,a)}.
\end{align}

The main lemma of the section is as follows.
\begin{lem}\label{Approx 4}
For $N \geqslant 100$,
\[
\bigg \lvert \int_{\mathfrak{M}}f(\alpha)^s e(-\alpha m) \, d\alpha -\int_{\mathfrak{M}} \bigg (\frac{V(q,a)}{6q}\int_1^N e\bigg( \frac{t^3 \theta}{3} \bigg) \, dt \bigg)^s  e(-\alpha m) \, d\alpha \bigg \rvert \leqslant 50s N^{5 \delta+s-4}.
\]
\end{lem}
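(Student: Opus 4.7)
The plan is to establish a pointwise approximation
\[
f(\alpha) = \frac{V(q,a)}{6q} \int_1^N e\!\left(\frac{t^3 \theta}{3}\right) dt + O(N^{2\delta})
\]
on each major arc $\mathfrak{M}(q,a)$, where $\alpha = a/q + \theta$, and then lift this to an integrated bound on the $s$-th powers via the convexity inequality $|u^s - v^s| \leqslant s|u-v| \cdot \max(|u|,|v|)^{s-1}$.

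The structural ingredient is that $6 S_n = 2n^3 + 3n^2 + n$ is an integer polynomial in $n$, so a direct expansion shows $6 S_{n+6q} \equiv 6 S_n \pmod{6q}$ for every $q \in \mathbb{N}$. Hence $n \mapsto e((a/q) S_n)$ is periodic with period $6q$, and the step function $A(t)$ defined in \eqref{Definition 1} satisfies the decomposition $A(t) = \frac{V(q,a)}{6q} t + R(t)$, where $|R(t)| \leqslant 12q$. Let $\Phi(t) := e(\theta S(t))$ with $S(t) = (2t^3 + 3t^2 + t)/6$, so that $\Phi(n) = e(\theta S_n)$. Abel summation yields
\[
f(\alpha) = A(N) \Phi(N) - \int_1^N A(t) \Phi'(t) \, dt,
\]
and substituting the decomposition of $A(t)$, then integrating the main part by parts (using $N\Phi(N) - \int_1^N t \Phi'(t)\,dt = \Phi(1) + \int_1^N \Phi(t)\,dt$), produces
\[
f(\alpha) = \frac{V(q,a)}{6q} \left( \Phi(1) + \int_1^N \Phi(t) \, dt \right) + R(N) \Phi(N) - \int_1^N R(t) \Phi'(t) \, dt.
\]

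Next I bound each error term using $|\theta| \leqslant N^{\delta - 3}$ and $q \leqslant P = N^\delta$. The $\Phi(1)$ term contributes $O(1)$; the boundary piece satisfies $|R(N) \Phi(N)| \leqslant 12q \leqslant 12 N^\delta$; and since $|\Phi'(t)| = 2\pi|\theta||S'(t)|$ with $\int_1^N |S'(t)| \, dt = S(N) - S(1) \leqslant N^3/2$, the integral error is at most $12q \cdot 2\pi |\theta| \cdot N^3/2 \leqslant 12\pi N^{2\delta}$. Replacing $\Phi(t)$ by $e(t^3 \theta/3)$ in the leading integral then costs at most $2\pi|\theta| \int_1^N |S(t) - t^3/3|\,dt \ll |\theta| N^3 \ll N^\delta$, via $|e(x) - e(y)| \leqslant 2\pi|x-y|$ together with $|S(t) - t^3/3| = t^2/2 + t/6$. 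Summing these contributions gives a pointwise estimate $|f(\alpha) - M(\alpha)| \leqslant c_0 N^{2\delta}$ for an explicit constant $c_0$, where $M(\alpha) := \frac{V(q,a)}{6q} \int_1^N e(t^3 \theta / 3) \, dt$.

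For the final integration, the trivial bounds $|f(\alpha)| \leqslant N$ and $|M(\alpha)| \leqslant N$ (the latter from $|V(q,a)/(6q)| \leqslant 1$) combine with convexity to give $|f(\alpha)^s - M(\alpha)^s| \leqslant c_0 s N^{s-1+2\delta}$. The measure of $\mathfrak{M}$ is at most $2 N^{\delta-3}$ per arc times at most $P^2 = N^{2\delta}$ arcs, hence $\leqslant 2 N^{3\delta - 3}$, so the integrated error is $\leqslant 2 c_0 s N^{s-4+5\delta}$. The main delicacy is tracking the constants tightly enough to reach the stated $50$ in the final bound — especially in $\int_1^N R(t) \Phi'(t)\, dt$, which is the dominant contribution — but beyond this bookkeeping, the argument is a standard major-arc partial-summation approximation.
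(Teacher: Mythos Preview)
Your approach is essentially identical to the paper's: Abel summation on $f(\alpha)$, the linear approximation $A(t)\approx \frac{V(q,a)}{6q}t$ via the $6q$-periodicity of $n\mapsto e((a/q)S_n)$, integration by parts, replacement of $S(t)$ by $t^3/3$, and then lifting to $s$-th powers via $|u^s-v^s|\leqslant s|u-v|\max(|u|,|v|)^{s-1}$ before integrating over $\mathfrak{M}$. The paper packages the intermediate steps as Lemmas~\ref{Partial Summation}--\ref{Approx 3}, arriving at the pointwise bound $|f(\alpha)-M(\alpha)|\leqslant 6q+1+5q\pi|\theta|N^3+3\pi N^\delta$, and then performs the same sum--integral over $q,a,\theta$.

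The one place your sketch falls short of the stated constant $50$ is that you bound $|\theta|\leqslant N^{\delta-3}$ \emph{before} integrating, which costs a factor of~$2$ in the dominant term (the $\int R\,\Phi'$ piece). The paper instead keeps the $|\theta|$-dependence in the pointwise error and uses $\int_{-N^{\delta-3}}^{N^{\delta-3}}|\theta|\,d\theta=N^{2(\delta-3)}$ directly; combined with their slightly sharper $|R(t)|\leqslant 6q$ (versus your $12q$), this is exactly what brings the final constant under $50$. You correctly flag this as bookkeeping, and indeed that is all it is.
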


We achieve this bound by a series of successive lemmas. Let $\theta=\alpha-a/q$. Recalling from Section \ref{sec: initial setup} that since the $\mathfrak{M}(q, a)$ are pairwise disjoint,
\begin{align}\label{Major Arc Disjointness}
\int_{\mathfrak{M}} f(\alpha)^s e(-\alpha m)= \sum_{q\leqslant N^\delta}\sum\limits_{\substack{a=1 \\ (a,q)=1}}^q\int_{\mathfrak{M}(q,a)} f(\alpha)^se(-\alpha m) \, d\alpha.
\end{align} 

In what follows, by abuse of notation, we extend the definition of square pyramidal numbers to all real \mbox{numbers $t \in \R$} and write
\[
S_t = \frac{1}{3}t^3+\frac{1}{2}t^2+\frac{1}{6}t, \quad t \in \mathbb{R}.
\]

We obtain the following result by applying partial summation. 
\begin{lem}\label{Partial Summation}
Let $\alpha \in \mathfrak{M}(q,a)$ and $\theta = \alpha - a/q$. Then for $f(\alpha)$ defined in \eqref{Vinogradov's Sum},
\begin{align}
 f(\alpha) &=A(N)e(\theta S_N)-2\pi i\theta\int_{1}^NA(t)\left (t^2+t+\frac{1}{6}\right )e(\theta S_t) \, dt.
\end{align}
\end{lem}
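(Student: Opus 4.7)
The plan is to prove Lemma \ref{Partial Summation} by a straightforward application of Abel's partial summation formula, after first decoupling the exponential sum into a rational part (captured by $A(t)$) and a smooth analytic part (captured by $e(\theta S_t)$). The starting observation is that since $\alpha = a/q + \theta$ and the exponential is a homomorphism, we may factor
\[
e(\alpha S_n) = e\!\left(\frac{a}{q}S_n\right)\, e(\theta S_n).
\]
This suggests taking $a_n := e\!\left(\tfrac{a}{q}S_n\right)$, so that $A(t)=\sum_{n\leqslant t} a_n$ agrees with the definition in \eqref{Definition 1}, and $\phi(t) := e(\theta S_t)$, where we have extended $S_t$ to a smooth polynomial on $\mathbb{R}$ as in the excerpt.

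Next, I would apply Abel summation in the form
\[
\sum_{n=1}^{N} a_n \phi(n) = A(N)\phi(N) - \int_{1}^{N} A(t)\,\phi'(t)\, dt,
\]
which is valid because $A$ is a right-continuous step function vanishing on $[0,1)$ (so no boundary term is picked up at the lower limit) and $\phi$ is $C^{1}$. Substituting the definitions, the boundary term is exactly $A(N)e(\theta S_N)$, matching the first term of the claimed identity.

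For the integral term, I would differentiate $\phi(t) = \exp(2\pi i \theta S_t)$ using the chain rule together with $S_t = \tfrac{1}{3}t^3 + \tfrac{1}{2}t^2 + \tfrac{1}{6}t$, which gives
\[
S_t' \;=\; t^2 + t + \tfrac{1}{6}, \qquad \phi'(t) \;=\; 2\pi i\, \theta\,\bigl(t^2 + t + \tfrac{1}{6}\bigr)\, e(\theta S_t).
\]
Pulling the constant factor $-2\pi i \theta$ out of the integral then yields precisely the second term in the statement.

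The argument is essentially a bookkeeping exercise, so no single step poses a serious obstacle. The only minor points requiring care are: (i) verifying that the integral really starts at $t=1$ (which relies on $A(t)=0$ for $t<1$, so that the standard form of Abel summation specializes without a contribution from $t=0$); and (ii) making sure the derivative $S_t' = t^2 + t + \tfrac{1}{6}$ is computed from the continuous extension of $S_t$, rather than from the discrete recursion $S_n - S_{n-1} = n^2$. Once these are in place, the identity for $f(\alpha)$ follows immediately.
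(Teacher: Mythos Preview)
Your proposal is correct and matches the paper's approach exactly: the paper simply states that the result follows by applying partial summation, and you have correctly filled in the details of that computation, including the factorization $e(\alpha S_n)=e\!\left(\tfrac{a}{q}S_n\right)e(\theta S_n)$ and the derivative $S_t'=t^2+t+\tfrac16$.
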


\begin{lem}\label{Congruence}
If $(6,q)=1$, then  $S_n\equiv S_{n+q}$ mod $q$. Otherwise, if $(2,q)=1$, then  $S_n\equiv S_{n+3q}$ mod $q$; $(3,q)=1$, then  $S_n\equiv S_{n+2q}$ mod $q$. Finally, if $6\mid q$, then $S_n\equiv S_{n+6q}$ mod $q$.
\end{lem}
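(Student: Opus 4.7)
The cleanest route is to reduce each of the four cases to the divisibility $q\mid S_{kq}$ for a single appropriately chosen $k\in\{1,2,3,6\}$. Writing the difference as a sum of squares,
\[
S_{n+kq}-S_{n} \;=\; \sum_{j=1}^{kq}(n+j)^{2} \;=\; kq\,n^{2} + n\,kq(kq+1) + S_{kq},
\]
one sees immediately that
\[
S_{n+kq}-S_{n} \;\equiv\; S_{kq} \pmod{q}.
\]
Thus it suffices to show $q\mid S_{kq}$ in the appropriate case.

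Since $6\,S_{kq}=kq(kq+1)(2kq+1)$, the divisibility $q\mid S_{kq}$ is equivalent to
\[
6 \;\mid\; k(kq+1)(2kq+1),
\]
and for the $k$ paired with each hypothesis this reduces to a short modular check. For $(6,q)=1$ with $k=1$, I will verify $6\mid (q+1)(2q+1)$ by inspecting the two residues $q\equiv 1,5\pmod 6$: when $q\equiv 1$ the factors contribute $2$ and $3$ respectively, while when $q\equiv 5$ one already has $6\mid q+1$. For $(2,q)=1$ with $k=3$, the requirement becomes $2\mid (3q+1)(6q+1)$, which follows because $3q+1$ is even when $q$ is odd. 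For $(3,q)=1$ with $k=2$, the requirement becomes $3\mid (2q+1)(4q+1)$; if $q\equiv 1\pmod 3$ then $3\mid 2q+1$, and if $q\equiv 2\pmod 3$ then $3\mid 4q+1$. Finally, if $6\mid q$, taking $k=6$ makes the factor $k$ itself divisible by $6$, and the claim is immediate.

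The entire argument is elementary; the only conceptual step is recognising that the telescoping identity collapses the $n$-dependence modulo $q$, reducing the problem to a divisibility question about the single number $S_{kq}$. I do not expect any genuine obstacle, so the main care required is the bookkeeping of the four $\gcd$ hypotheses with their corresponding values of $k$.
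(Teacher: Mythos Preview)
Your proof is correct, and the overall strategy coincides with the paper's: both reduce $S_{n+kq}-S_n$ modulo $q$ to an expression independent of $n$ and then verify that this residual term vanishes for the appropriate $k$. The difference is only in execution. The paper expands $S_{n+q}$ directly from the cubic polynomial $S_t=\tfrac{1}{3}t^3+\tfrac{1}{2}t^2+\tfrac{1}{6}t$ and observes that, when the relevant coefficients $\tfrac{1}{2},\tfrac{1}{3},\tfrac{1}{6}$ are units in $\mathbb{Z}/q\mathbb{Z}$, the leftover terms $\tfrac{1}{3}q^3+\tfrac{1}{2}q^2+\tfrac{1}{6}q$ automatically vanish modulo $q$; the other cases are dispatched by the same argument with $q$ replaced by $2q$, $3q$, or $6q$. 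You instead use the combinatorial identity $S_{n+kq}-S_n=\sum_{j=1}^{kq}(n+j)^2$ to isolate $S_{kq}$ and then check $6\mid k(kq+1)(2kq+1)$ by explicit residue inspection. Your route avoids any discussion of fractions modulo $q$, at the cost of a short case analysis; the paper's route is a line shorter but requires the reader to parse what ``invertible coefficients'' means here. Either way the content is the same.
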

\begin{proof}
The congruence relation $S_n\equiv S_{n+6q} \bmod q$ always holds true. Note that 
\begin{align}
    S_{n+q}=\frac{1}{3}(n+q)^3+\frac{1}{2}(n+q)^2+\frac{1}{6}(n+q)&\equiv \frac{1}{3}n^3+\frac{1}{2}n^2+\frac{1}{6}n+\frac{1}{3}q^3+\frac{1}{2}q^2+\frac{1}{6}q\notag\\
    &\equiv S_n+\frac{1}{3}q^3+\frac{1}{2}q^2+\frac{1}{6}q\hspace{0.1cm}\bmod q.\label{modulo q}
\end{align}

If $(6,q)=1$, then the coefficients of the last three terms in \eqref{modulo q} are all invertible modulo $q$, and therefore, we see that $S_{n+q}\equiv S_n\bmod q$.
Similarly, if $(2,q)=1$, then $S_{n+3q}\equiv S_n\bmod q$; and if $(3,q)=1$, \mbox{then $S_{n+2q}\equiv S_n\bmod q$.} The result then follows.
\end{proof}
\begin{lem}\label{Approx 1}
For all $1 \leqslant t \leqslant N$, with $A(t)$ defined in \eqref{Definition 1}, 
\begin{align*}
   \bigg \lvert A(t) - \frac{V(q,a)}{6q} t \bigg \rvert \leqslant 6q.
\end{align*}
\end{lem}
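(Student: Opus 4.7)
\textbf{Proof proposal for Lemma \ref{Approx 1}.}

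The plan is to exploit the periodicity of $n\mapsto e(\tfrac{a}{q}S_n)$ guaranteed by Lemma \ref{Congruence}: regardless of the gcd of $q$ with $6$, the congruence $S_{n+6q}\equiv S_n\bmod q$ always holds, so the summand $e(\tfrac{a}{q}S_n)$ has period dividing $6q$. Thus $V(q,a)$ is a full-period sum, and $A(t)$ differs from a complete block of such sums only by a short residual piece that is trivially bounded.

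First, since $A$ counts integers, $A(t)=A(\lfloor t\rfloor)$; write $\lfloor t\rfloor=6qk+r$ with $k\in\mathbb{Z}_{\geqslant 0}$ and $0\leqslant r<6q$. Breaking the range $\{1,\ldots,6qk+r\}$ into $k$ consecutive blocks of length $6q$ and applying the periodicity from Lemma \ref{Congruence} to each block, one obtains
\begin{align*}
A(t)=A(\lfloor t\rfloor)=\sum_{j=0}^{k-1}\sum_{n=6qj+1}^{6q(j+1)}e\Big(\tfrac{a}{q}S_n\Big)+\sum_{n=1}^{r}e\Big(\tfrac{a}{q}S_n\Big)=k\,V(q,a)+A(r).
\end{align*}
Correspondingly, the linear model splits as $\tfrac{V(q,a)}{6q}t=k\,V(q,a)+\tfrac{V(q,a)}{6q}(t-6qk)$, so subtracting cancels the bulk term $kV(q,a)$ and leaves
\begin{align*}
A(t)-\tfrac{V(q,a)}{6q}t \;=\; A(r)-\tfrac{V(q,a)}{6q}\tau,\qquad \tau:=t-6qk\in[0,6q).
\end{align*}

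Next I would bound this residual. A crude triangle inequality using $|A(r)|\leqslant r\leqslant 6q$ and $|V(q,a)|\leqslant 6q$ already gives a bound of order $q$, but to land precisely on $6q$ I would group the terms as
\begin{align*}
A(r)-\tfrac{V(q,a)}{6q}\tau \;=\; \sum_{n=1}^{r}\Big(e\big(\tfrac{a}{q}S_n\big)-\tfrac{V(q,a)}{6q}\Big) \;-\; \tfrac{(\tau-r)V(q,a)}{6q}.
\end{align*}
Since the full-period sum $\sum_{n=1}^{6q}(e(\tfrac{a}{q}S_n)-\tfrac{V(q,a)}{6q})$ vanishes, the first piece also equals $-\sum_{n=r+1}^{6q}(e(\tfrac{a}{q}S_n)-\tfrac{V(q,a)}{6q})$; taking whichever representation is shorter bounds it by $2\min(r,6q-r)\leqslant 3q$, while the second piece is at most $1$ since $0\leqslant\tau-r<1$. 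This gives the desired $6q$ bound with room to spare.

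The whole argument is essentially bookkeeping once the correct period is identified, so there is no genuine obstacle; the only step that requires a brief moment of care is the reduction of non-integer $t$ to an integer via $A(t)=A(\lfloor t\rfloor)$ and the separation of the fractional contribution $\tau-r$, which is what prevents the naive triangle bound from being worse than claimed.
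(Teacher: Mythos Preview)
Your approach is exactly the paper's: decompose $A(t)$ into $\lfloor t/(6q)\rfloor$ full blocks of length $6q$, each equal to $V(q,a)$ by Lemma~\ref{Congruence}, plus a short tail. The paper records only this decomposition and leaves the final arithmetic implicit.

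There is, however, an arithmetic slip in your last step. Each term $e(\tfrac{a}{q}S_n)-\tfrac{V(q,a)}{6q}$ has modulus at most $2$ (not $1$), and $\min(r,6q-r)\leqslant 3q$, so your bound on the centred sum is $2\min(r,6q-r)\leqslant 6q$, not $3q$. Together with the fractional piece bounded by $1$ this gives $6q+1$, which narrowly overshoots the stated $6q$; so ``with room to spare'' is not quite right.

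A clean way to hit $6q$ exactly avoids the centring. From $A(t)-\tfrac{V(q,a)}{6q}t=A(r)-\tfrac{V(q,a)}{6q}\tau$ with $0\leqslant r\leqslant\tau<6q$, the triangle inequality gives directly
\[
\Big|A(r)-\tfrac{V(q,a)}{6q}\tau\Big|\leqslant |A(r)|+\Big|\tfrac{V(q,a)}{6q}\Big|\tau\leqslant r+\tau,
\]
while writing $A(r)=V(q,a)-\sum_{n=r+1}^{6q}e(\tfrac{a}{q}S_n)$ gives the complementary estimate
\[
\Big|A(r)-\tfrac{V(q,a)}{6q}\tau\Big|\leqslant |V(q,a)|\Big(1-\tfrac{\tau}{6q}\Big)+(6q-r)\leqslant (6q-\tau)+(6q-r).
\]
These two upper bounds sum to $12q$, so their minimum is at most $6q$, which is precisely the lemma.
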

\begin{proof}
Writing
\[
A(t) = \sum_{n=1}^{6q} e\bigg (\frac{a}{q}S_n \bigg)\floor*{\frac{t}{6q}} +\sum_{n = \floor*{\frac{t}{6q}}6q+1}^t e \bigg (\frac{a}{q}S_n \bigg)
\]
and using Lemma \ref{Congruence}, we obtain the desired result. 
\end{proof} 
\begin{lem}\label{Approx 2}
Let $\alpha \in \mathfrak{M}(q,a)$ and $\theta = \alpha - a/q$.
Then
\[
\bigg \lvert f(\alpha)-\frac{V(q,a)}{6q}\int_1^N e(\theta S_t)\, dt \bigg \rvert \leqslant 6q+1+5q\pi\theta N^3.
\]
\end{lem}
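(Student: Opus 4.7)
The plan is to combine Lemma \ref{Partial Summation} (which expresses $f(\alpha)$ as a boundary term plus an integral involving $A(t)$) with Lemma \ref{Approx 1} (which replaces $A(t)$ by its main-term approximation $\frac{V(q,a)}{6q}t$ at the cost of an additive error of size $6q$). After substitution the difference $f(\alpha) - \frac{V(q,a)}{6q}\int_1^N e(\theta S_t)\, dt$ should split naturally into a ``main part'' that simplifies by integration by parts and an ``error part'' coming from the $6q$-sized residual.

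First I would write, via Lemma \ref{Approx 1}, $A(t) = \frac{V(q,a)}{6q}t + R(t)$ with $|R(t)| \leqslant 6q$, and substitute into the identity from Lemma \ref{Partial Summation}. This gives
\[
f(\alpha) = \frac{V(q,a)}{6q}\Bigl[N e(\theta S_N) - 2\pi i\theta\!\int_1^N t\bigl(t^2+t+\tfrac{1}{6}\bigr)e(\theta S_t)\, dt\Bigr] + E,
\]
where
\[
E = R(N)e(\theta S_N) - 2\pi i\theta\int_1^N R(t)\bigl(t^2+t+\tfrac{1}{6}\bigr)e(\theta S_t)\, dt.
\]
The key simplification uses the identity $\frac{d}{dt}\bigl(t\, e(\theta S_t)\bigr) = e(\theta S_t) + 2\pi i\theta\, t(t^2+t+\tfrac16)e(\theta S_t)$, since $S_t' = t^2 + t + \tfrac16$. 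Integrating from $1$ to $N$ and solving for the weighted integral converts the bracketed expression into $e(\theta S_1) + \int_1^N e(\theta S_t)\, dt$, which is exactly the desired main term plus a harmless $e(\theta S_1)$ remainder.

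The remaining work is bookkeeping on absolute values. I would bound $\bigl|\frac{V(q,a)}{6q}e(\theta S_1)\bigr| \leqslant 1$ using the trivial estimate $|V(q,a)| \leqslant 6q$ from its definition in \eqref{Definition of V(q,a)} as a sum of $6q$ unit-modulus terms. For the error $E$, the boundary term contributes $|R(N)e(\theta S_N)| \leqslant 6q$, while the integral term is bounded by
\[
2\pi|\theta|\cdot 6q\int_1^N\!\bigl(t^2+t+\tfrac16\bigr)dt = 12\pi|\theta|q(S_N - S_1) = 2\pi|\theta|q(2N^3+3N^2+N-6).
\]
Combining $6q$, $1$, and this last bound yields the claimed estimate once we observe that $2(2N^3+3N^2+N-6) \leqslant 5N^3$ for $N$ in the relevant range (easily $N \geqslant 6$), which is assured by $N = \lceil(3m)^{1/3}\rceil + 1$ for the values of $m$ we consider.

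I expect no serious obstacle here: the only mild subtlety is recognizing that the integration-by-parts identity is the right tool, and that the integrand $t(t^2+t+\tfrac16)$ appearing after substitution is precisely $t\cdot S_t'$, which is exactly what makes the boundary and integral contributions telescope. The polynomial bound $2N^3+3N^2+N-6 \leqslant \tfrac{5}{2}N^3$ is the only place where $N$ needs to be at least moderately large, and this is harmless given the setup in Section \ref{sec: initial setup}.
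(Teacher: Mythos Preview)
Your proposal is correct and follows essentially the same approach as the paper: both apply Lemma~\ref{Approx 1} to replace $A(t)$ by $\frac{V(q,a)}{6q}t$ inside the partial-summation identity of Lemma~\ref{Partial Summation}, then reduce the resulting main term via the integration-by-parts identity $\frac{d}{dt}\bigl(t\,e(\theta S_t)\bigr)=e(\theta S_t)+2\pi i\theta\,t S_t'\,e(\theta S_t)$, and finally absorb the boundary contribution $\frac{V(q,a)}{6q}e(\theta S_1)=\frac{V(q,a)}{6q}e(\theta)$ via the trivial bound $|V(q,a)|\leqslant 6q$. Your explicit remark that $2(2N^3+3N^2+N-6)\leqslant 5N^3$ needs $N\geqslant 6$ is a detail the paper leaves implicit, but otherwise the arguments coincide.
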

\begin{proof} 
 By use of Lemma \ref{Approx 1}, we obtain
\begin{align*}
    \bigg \lvert A(N)e(\theta S_N)-\frac{V(q,a)}{6q} N e(\theta S_N)\bigg \rvert 
   &\leqslant 6q,\\
\textrm{and} \quad \bigg \lvert 2\pi i\theta \int_{1}^NA(t)\bigg(t^2+t+\frac{1}{6} \bigg)e(\theta S_t)\, dt&-2\pi i\theta\int_{1}^N\frac{V(q,a)}{6q}t\bigg(t^2+t+\frac{1}{6} \bigg )e(\theta S_t)\, dt \bigg \rvert \notag \\
    &\leqslant 2\pi\theta\int_{1}^N6q \bigg(t^2+t+\frac{1}{6}  \bigg )\, dt \leqslant 5q\pi\theta N^3.
\end{align*}

Therefore, employing the triangle inequality and Lemma \ref{Partial Summation},
\begin{align} \label{Approx 2 Step 1}
\bigg \lvert f(\alpha)-\bigg(\frac{V(q,a)}{6q}Ne(\theta S_N)-2\pi i\theta\int_{1}^N&\frac{V(q,a)}{6q}t\bigg (t^2+t+\frac{1}{6} \bigg )e(\theta S_t)\, dt \bigg)\bigg \rvert \notag\\
&\leqslant 6q+5q\pi\theta N^3.
\end{align}
Applying integration by parts, we have
\begin{align}
\frac{V(q,a)}{6q} N e(\theta S_n)&-2\pi i\theta\int_{1}^N\frac{V(q,a)}{6q}t\bigg (t^2+t+\frac{1}{6} \bigg )e(\theta S_t) \, dt \notag \\
&=\frac{V(q,a)}{6q}\bigg (e(\theta)+\int_1^N e(\theta S_t) \, dt\bigg ).\label{Approx 2 Step 2}
\end{align}

Putting together \eqref{Approx 2 Step 1} and \eqref{Approx 2 Step 2} and  trivially bounding $V(q,a)$, we achieve the final bound
\[
\bigg \lvert f(\alpha)-\frac{V(q,a)}{6q}\int_1^N e(\theta S_t)\, dt \bigg \rvert \leqslant 6q+1+5q\pi\theta N^3.
\]
\end{proof}
\begin{rem}
    Observe that in Lemma \ref{Approx 2}, we use the trivial bound $|V(q,a)|\leq 6q$ instead of the sharp bound from Lemma \ref{Weyl Inequality V2}. This trivial bound suffices here because $q$ is later summed only up to $N^\delta$, so the resulting contribution is absorbed when bounding other larger error terms, and therefore can be estimated more crudely.
\end{rem}
\begin{lem}\label{Approx 3}
Let $N \geqslant 100, \alpha \in \mathfrak{M}(q,a)$ and $\theta = \alpha - a/q$.
Then
\[
\bigg \lvert f(\alpha)-\frac{V(q,a)}{6q}\int_1^N e\bigg( \frac{t^3 \theta}{3} \bigg) \, dt \bigg \rvert \leqslant 6q+1+5q\pi\theta N^3+3\pi N^\delta.
\]    
\end{lem}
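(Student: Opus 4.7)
The approach is to build on Lemma \ref{Approx 2} and absorb the extra lower-order terms $\tfrac{t^2\theta}{2}+\tfrac{t\theta}{6}$ that appear inside the phase $\theta S_t = \tfrac{t^3\theta}{3}+\tfrac{t^2\theta}{2}+\tfrac{t\theta}{6}$. By the triangle inequality,
\[
\bigg|f(\alpha)-\frac{V(q,a)}{6q}\int_1^N e\!\left(\tfrac{t^3\theta}{3}\right)dt\bigg|
\leqslant
\bigg|f(\alpha)-\frac{V(q,a)}{6q}\int_1^N e(\theta S_t)\,dt\bigg|
+\bigg|\frac{V(q,a)}{6q}\bigg|\cdot\bigg|\int_1^N\!\!\big(e(\theta S_t)-e(\tfrac{t^3\theta}{3})\big)dt\bigg|,
\]
so Lemma \ref{Approx 2} handles the first summand and yields the three terms $6q+1+5q\pi\theta N^3$ on the right-hand side. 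It remains to show that the second summand is at most $3\pi N^\delta$.

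The key step is the elementary Lipschitz estimate $|e(x)-e(y)|\leqslant 2\pi|x-y|$. Since $\theta S_t - \tfrac{t^3\theta}{3} = \theta\big(\tfrac{t^2}{2}+\tfrac{t}{6}\big)$, we get
\[
\big|e(\theta S_t)-e(\tfrac{t^3\theta}{3})\big|\leqslant 2\pi|\theta|\Big(\tfrac{t^2}{2}+\tfrac{t}{6}\Big).
\]
Integrating on $[1,N]$ and using the major arc bound $|\theta|\leqslant N^{\delta-3}$ together with the trivial bound $|V(q,a)/(6q)|\leqslant 1$ yields
\[
\bigg|\frac{V(q,a)}{6q}\int_1^N\!\big(e(\theta S_t)-e(\tfrac{t^3\theta}{3})\big)dt\bigg|
\leqslant 2\pi N^{\delta-3}\Big(\tfrac{N^3}{6}+\tfrac{N^2}{12}\Big),
\]
which is comfortably below $3\pi N^\delta$ once $N\geqslant 100$ (since $\tfrac{N^3}{6}+\tfrac{N^2}{12}\leqslant \tfrac{N^3}{4}$ for $N\geqslant 1$, giving an estimate of $\tfrac{\pi}{2}N^\delta$).

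No real obstacle is expected here; the lemma is essentially a bookkeeping step. The one minor care point is making sure the constant $3\pi$ is loose enough that the $\tfrac{N^2}{12}$ correction and the contribution of the $\tfrac{t\theta}{6}$ term in the phase are both absorbed, which is why the hypothesis $N\geqslant 100$ is imposed. Putting the two estimates together completes the proof of the stated inequality.
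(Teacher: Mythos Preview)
Your proof is correct and follows essentially the same approach as the paper: invoke Lemma~\ref{Approx 2}, use the trivial bound $|V(q,a)/(6q)|\leqslant 1$, and then estimate $\big|e(\theta S_t)-e(\tfrac{t^3\theta}{3})\big|$ via the Lipschitz inequality $|e(x)-e(y)|\leqslant 2\pi|x-y|$. The only cosmetic difference is that the paper bounds the integrand pointwise by $3\pi N^{\delta-1}$ (using $\tfrac{t^2}{2}+\tfrac{t}{6}\leqslant N^2$) and then integrates, whereas you integrate $\tfrac{t^2}{2}+\tfrac{t}{6}$ exactly first, which even gives you the sharper constant $\tfrac{\pi}{2}$ in place of $3\pi$.
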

\begin{proof} 

By Lemma \ref{Approx 2}, it is sufficient to show
\[
\bigg \lvert \int_1^Ne(\theta S_t) \, dt-\int_1^N e\bigg( \frac{t^3 \theta}{3} \bigg) \, dt \bigg \rvert \leqslant 3\pi N^\delta.
\]

To see this, we write
\begin{align}\label{Approx 3 Step 1}
\bigg \lvert \int_1^Ne(\theta S_t) \, dt-\int_1^N e\bigg( \frac{t^3 \theta}{3} \bigg) \, dt \bigg \rvert \leqslant \int_1^N \bigg \lvert e\bigg(\frac{t^2 \theta}{2}+\frac{t}{6}\theta\bigg)-1\bigg \rvert \, dt.
\end{align}

Since $|\theta|\leqslant N^{\delta-3}$ and $1 \leqslant t \leqslant N,$ we have for any $N \geqslant 100$,
\[
\bigg \lvert 2\pi\theta\bigg(\frac{t^2}{2}+\frac{t}{6}\bigg) \bigg \rvert \leqslant 2\pi N^{\delta-1} \leqslant 1,
\]
and therefore, by Taylor expansion,
\begin{align}
\bigg \lvert e\bigg(\frac{t^2 \theta}{2}+\frac{t}{6}\theta\bigg)-1\bigg \rvert 
\leqslant 3\pi N^{\delta-1}. \label{Approx 3 Step 2}
\end{align}

Substituting \eqref{Approx 3 Step 2} into \eqref{Approx 3 Step 1} and integrating, we acquire the desired result.
\end{proof}
\begin{proof}[Proof of Lemma \ref{Approx 4}] 
An application of the polynomial factorization combining with Lemma \ref{Approx 3} shows that \\\mbox{if $\alpha \in \mathfrak{M}(q,a)$} and $\theta = \alpha-a/q$, then
\begin{align*}
\bigg \lvert f(\alpha)^s-\bigg (\frac{V(q,a)}{6q}\int_1^N e\bigg( \frac{t^3 \theta}{3} \bigg) \, dt \bigg)^s \bigg \rvert \leqslant s N^{s-1}(6q+1+5q\pi\theta N^3+3\pi N^\delta).
\end{align*}
\begin{align}
\bigg \lvert \int_{\mathfrak{M}}f(\alpha)^s &e(-\alpha m) \, d\alpha -\int_{\mathfrak{M}} \bigg (\frac{V(q,a)}{6q}\int_1^N e\bigg( \frac{t^3 \theta}{3} \bigg) \, dt \bigg)^s  e(-\alpha m) \, d\alpha \bigg \rvert \notag \\
& \leqslant \sum_{q\leqslant N^\delta}\sum\limits_{\substack{a=1 \\ (a,q)=1}}^q\int_{-N^{\delta-3}}^{N^{\delta-3}}s((6q+1)N^{s-1}+5q\pi\theta N^{s+2}+3\pi N^{\delta+s-1}) \, d\theta \notag\\
&\leqslant 2s\sum_{q\leqslant N^\delta}\sum\limits_{\substack{a=1 \\ (a,q)=1}}^q(6N^{s+2\delta-4}+3\pi N^{s+3\delta-4}+3\pi N^{2\delta+s-4}) \leqslant 50sN^{5\delta+s-4},\label{4.10 approximation}
\end{align}
which completes the proof.
\end{proof}

\section{Major Arcs : Part I}\label{sec: Major Arcs II} 
Suppose $s \geqslant 9$. Let $\alpha \in \mathfrak{M}(q,a), \theta = \alpha - a / q$ and define
\begin{align}
\mathfrak{S}(m, Q) &: =\sum_{q\leqslant Q}\sum\limits_{\substack{a=1 \\ (a,q)=1}}^q\bigg(\frac{V(q,a)}{6q}\bigg)^s e\bigg(-\frac{am}{q}\bigg), \label{S (m,Q) definition} \\
v(\theta) &: = \int_1^N e\bigg (\frac{t^3 \theta}{3} \bigg) \dd t, \label{v theta definition} \\
\textrm{and} \quad J^*(m) &:= \int_{-N^{\delta-3}}^{N^{\delta-3}} \left(\left(\frac{1}{3}\right)^{1/3}v(\theta)\right)^se(-\theta m) \dd\theta. \label{J* definition}
\end{align}

Furthermore, we let 
\begin{align}
\mathcal{I}_{s}^*(m)&:=\int_{\mathfrak{M}} \bigg (\frac{V(q,a)}{6q}\int_1^N e\bigg( \frac{t^3 \theta}{3} \bigg) \dd t \bigg)^s  e(-\alpha m) \dd\alpha \notag\\
&=\sum_{q\leqslant N^\delta}\sum\limits_{\substack{a=1 \\ (a,q)=1}}^q\bigg(\frac{V(q,a)}{6q}\bigg)^s e\bigg(-\frac{am}{q}\bigg)\int_{-N^{\delta-3}}^{N^{\delta-3}} v(\theta)^se(-\theta m) \dd\theta \notag\\
&=3^{s/3}\mathfrak{S}(m,N^\delta)J^*(m) \label{Approximating Major Arc Integral}.
\end{align}

We will approximate the major-arc integral by $\mathcal{I}_{s}^*(m)$. To do so, we will estimate $\mathfrak{S}(m,N^\delta)$ and $J^*(m)$ separately. We will attain this in this section, Section \ref{sec: Major Arcs II}, and the next section, \mbox{Section \ref{sec: Major Arcs Sec III}}, respectively.

\subsection{Completing the Singular Series} We first complete the series $\mathfrak{S}(m,N^\delta)$. Define
\begin{align}
    \mathfrak{S}(m) &:= \sum_{q=1}^\infty V(q)\label{definition of S(m)}, \quad \textrm{where} \\
    \label{V Definition}
    V(q) &:= \sum\limits_{\substack{a=1 \\ (a,q)=1}}^q\bigg(\frac{V(q,a)}{6q}\bigg)^se\bigg(-\frac{am}{q}\bigg),
\end{align}
and $V(q,a)$ is given by \eqref{Definition of V(q,a)}. We first show that $V(q)$ is multiplicative.

\begin{lem}\label{V Multiplicativity 2}
The function $V(q)$, as defined in \eqref{V Definition} is multiplicative.
\end{lem}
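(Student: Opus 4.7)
The plan is to reduce multiplicativity of $V(q)$ to the already established twisted multiplicativity of $V(q,a)$ in Lemma \ref{V Multiplicativity 1}, combined with the Chinese Remainder Theorem. Fix coprime $q,r\in\N$, and set up to compare
\[
V(qr)=\sum_{\substack{c=1\\(c,qr)=1}}^{qr}\bigg(\frac{V(qr,c)}{6qr}\bigg)^s e\bigg(-\frac{cm}{qr}\bigg)
\]
with the product $V(q)V(r)$.

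First I would reparametrize the outer sum using CRT: as $a$ runs over reduced residues modulo $q$ and $b$ runs over reduced residues modulo $r$, the integer $c=ar+bq$ runs (modulo $qr$) through each reduced residue class modulo $qr$ exactly once, since $(q,r)=1$. Under this change of variables, the exponential factor splits cleanly via
\[
\frac{c}{qr}=\frac{ar+bq}{qr}=\frac{a}{q}+\frac{b}{r},\qquad\text{so}\qquad e\bigg(-\frac{cm}{qr}\bigg)=e\bigg(-\frac{am}{q}\bigg)\,e\bigg(-\frac{bm}{r}\bigg).
\]

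Next I would apply Lemma \ref{V Multiplicativity 1} to the $V(qr,c)$ factor, which yields $V(qr,ar+bq)=\tfrac{1}{6}V(q,a)V(r,b)$. Dividing through by $6qr$ produces the key identity
\[
\frac{V(qr,c)}{6qr}=\frac{V(q,a)V(r,b)}{36\,qr}=\frac{V(q,a)}{6q}\cdot\frac{V(r,b)}{6r},
\]
so raising to the $s$-th power and substituting back separates the double sum into the product of the two independent sums defining $V(q)$ and $V(r)$. Since there is no obstacle here beyond book-keeping, the proof is essentially a one-line verification once Lemma \ref{V Multiplicativity 1} and CRT are in place; the only thing to be mildly careful about is that the bijection $(a,b)\leftrightarrow c\pmod{qr}$ really does preserve the coprimality conditions, which follows because $(c,qr)=1$ iff $(c,q)=(c,r)=1$ iff $(a,q)=(b,r)=1$. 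This completes the reduction and gives $V(qr)=V(q)V(r)$.
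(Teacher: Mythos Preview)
Your proof is correct and follows essentially the same approach as the paper: the paper simply invokes Lemma~\ref{V Multiplicativity 1} together with the Chinese Remainder Theorem computation (deferred to \cite[Lemma~6.2]{ours}), which is precisely the reparametrization $c=ar+bq$ and factorization you carry out explicitly.
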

\begin{proof}

 The complete exponential sums factor under coprime moduli by the Chinese remainder theorem and the periodity of $S_n$, giving $V(qr, ar+bq)=\frac{1}{6}V(q,a)V(r,b)$; Then summing the factorization over reduced residue classes, the normalized exponential factor splits into the product of the $q$-part and the $r$-part, and $V(qr)$ factors as \mbox{$V(q)V(r)$, }concluding the proof. One can see detailed analogous proof in \cite[Lemma~6.2]{ours}.
\end{proof}
\begin{lem}\label{Singular Series Extension}
Let $s \geqslant 9$ and $N^{\delta} \geqslant e^{e^{45}}$. Then $|\mathfrak{S}(m)|\leqslant e^{e^{46}}$. Furthermore, 
\begin{align*}
|\mathfrak{S}(m)-\mathfrak{S}(m, N^\delta)| \leqslant \frac{ 6^s}{\left(\frac{5s}{21}-2\right) N^{\left(\frac{5s}{21}-2\right)\delta}}.
\end{align*}
\end{lem}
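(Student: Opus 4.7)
The plan rests on two inputs: the multiplicativity of $V(q)$ from Lemma \ref{V Multiplicativity 2}, and a pointwise estimate on the complete cubic exponential sum $V(q,a)$. Both assertions will follow from a single bound of the shape $|V(q)| \leqslant 6^{s} q^{1 - 5s/21}$, which is summable because $s \geqslant 9$ forces $5s/21 \geqslant 15/7 > 2$. To establish this, I would first derive a uniform pointwise estimate of the form $|V(q,a)/(6q)| \leqslant 6\, q^{-5/21}$ for every $a$ coprime to $q$. By Lemma \ref{Congruence}, the function $n \mapsto e(aS_n/q)$ has period dividing $6q$, so $V(q,a)$ is (up to a bounded multiplicative factor) a genuine complete exponential sum modulo $q$ of the cubic polynomial $S_n$. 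Standard bounds for such sums---Weil's estimate at prime moduli, combined with the Chinese Remainder Theorem to reduce composite moduli to prime powers---deliver cancellation much stronger than $q^{-5/21}$. Raising this to the $s$-th power and invoking $|e(-am/q)| = 1$ together with $\varphi(q) \leqslant q$ in the definition \eqref{V Definition} yields the claimed bound on $|V(q)|$.

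With this pointwise bound in hand, the tail estimate follows directly by comparison with an integral:
\[
|\mathfrak{S}(m) - \mathfrak{S}(m, N^\delta)| \leqslant \sum_{q > N^\delta} |V(q)| \leqslant 6^s \int_{N^\delta}^\infty t^{1 - 5s/21}\, dt = \frac{6^s}{\bigl(\tfrac{5s}{21} - 2\bigr) N^{(5s/21 - 2)\delta}},
\]
which matches the second assertion exactly. For the absolute bound $|\mathfrak{S}(m)| \leqslant e^{e^{46}}$, I would expand $\mathfrak{S}(m) = \prod_p \bigl(1 + \sum_{k \geqslant 1} V(p^k)\bigr)$ via multiplicativity, take logarithms, and estimate each local factor: the trivial bound $|V(p^k,a)/(6p^k)| \leqslant 1$ handles the contribution of finitely many small primes (or small $k$), while the sharp pointwise estimate above controls the contribution from large primes. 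The doubly-exponential constant $e^{e^{46}}$ is chosen loosely to absorb the worst-case contribution of the small-prime factors once the hypothesis $N^\delta \geqslant e^{e^{45}}$ guarantees that the tail portion of the Euler product is harmless.

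The principal technical obstacle is pinning down the pointwise bound on $V(q,a)$ with the explicit constant $6$ and the specific exponent $5/21$. While a direct application of Weyl's inequality to the cubic $S_n$ would deliver an even stronger exponent, the value $5/21$ is presumably chosen because it behaves cleanly under the multiplicative decomposition into prime-power moduli and yields summability precisely at the threshold $s \geqslant 9$. Tracking the constant $6$ carefully through Lemma \ref{Congruence}, the CRT reduction, and the prime-power version of the Weil bound is the delicate step; once that explicit input is secured, the integral comparison and the Euler-product bookkeeping are routine.
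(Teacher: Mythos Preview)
Your overall strategy is sound and would work, but it diverges from the paper's argument in two places, and in both the paper takes a more direct route.

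First, the paper does not go through Weil plus the Chinese Remainder Theorem to bound $V(q,a)$. Instead it applies its own explicit Weyl inequality, Lemma~\ref{Weyl Inequality V2}, with $X=6q$ directly to the sum $V(q,a)=\sum_{n\leqslant 6q}e(aS_n/q)$. This yields
\[
|V(q,a)|\leqslant 8q^{3/4}+26\,q^{\,3/4+\frac{0.53305}{\log(2\log 6q)}+\frac{\log\log q}{4\log q}},
\]
and the exponent $16/21$ arises not from any multiplicative decomposition but simply because, once $q\geqslant e^{e^{45}}$, the extra logarithmic losses satisfy $\frac{0.53305}{\log(2\log 6q)}+\frac{\log\log q}{4\log q}\leqslant \frac{1}{84}$, so $|V(q,a)|\leqslant 34\,q^{16/21}$. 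Crucially, the paper only claims $|V(q)|\leqslant 6^{s}q^{1-5s/21}$ for $q\geqslant e^{e^{45}}$; since $N^{\delta}\geqslant e^{e^{45}}$, that suffices for the tail integral, and your attempt to secure the bound uniformly in $q$ with the explicit constant $6$ is unnecessary extra work.

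Second, for the bound $|\mathfrak{S}(m)|\leqslant e^{e^{46}}$ the paper does not pass through the Euler product at all. It simply splits the sum at $e^{e^{45}}$: the initial segment $\sum_{q\leqslant e^{e^{45}}}|V(q)|$ is bounded trivially using $|V(q,a)/(6q)|\leqslant 1$, and the tail is handled by the same integral $6^{s}\int_{e^{e^{45}}}^{\infty}x^{1-5s/21}\,dx$. Your Euler-product bookkeeping would also succeed, but it requires controlling each local factor $1+\sum_{k\geqslant1}V(p^{k})$ with explicit constants at every prime, which is more delicate than the paper's crude split.
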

\begin{proof}
We begin by applying Lemma \ref{Weyl Inequality V2} to evaluate $V(q,a)$. We cover all values of $a$ and $q$ by separating into cases $(a,3q)=1$ and $(a,3q)\neq 1$.  In Lemma \ref{Weyl Inequality V2}, it suffices to pick $\eta = 1$. We then have
\begin{align*}
|V(q,a)|
&\leqslant 8 q^{\frac{3}{4}}+26  q^{\frac{3}{4}+\frac{0.53305}{\log (2\log 6q)}+ \frac{\log \log q}{4\log q}} .
\end{align*}

When $q \geqslant e^{e^{45}}$,
\[
\frac{0.53305}{\log (2\log 6q)}+ \frac{\log \log q}{4\log q} \leqslant \frac{1}{84},
\]
implying that
\[
|V(q,a)| \leqslant 8q^{\frac{3}{4}}+26 q^{\frac{16}{21}} \leqslant 34 q^{\frac{16}{21}}.
\]

Thus, $ |V(q)| \leqslant 6^s q^{1-\frac{5s}{21}}$ for $q \geqslant e^{e^{45}}$. Since $s \geqslant 9$, $\mathfrak{S}(m)$ must converge absolutely and uniformly with respect to $m$. Therefore,
\begin{align*}
|\mathfrak{S}(m)| 
&\leqslant\sum_{q=1}^{\lfloor e^{e^{45}} \rfloor}|V(q)|+ 6^s\int_{e^{e^{45}}}^\infty x^{1-\frac{5s}{21}}\dd x
    \leqslant e^{e^{46}}.
\end{align*}

Moreover, when $N^{\delta} \geqslant e^{e^{45}}$,
we deduce that
\begin{align*}
    |\mathfrak{S}(m)-\mathfrak{S}(m, N^\delta)| \leqslant 6^s\int_{N^{\delta}}^\infty x^{1-\frac{5s}{21}} \dd x \leqslant  \frac{ 6^s}{\left(\frac{5s}{21}-2\right) N^{\left(\frac{5s}{21}-2\right)\delta}}.
\end{align*}
\end{proof}

\subsection{Counting Solutions to Arithmetic Congruences}\label{subsec: Arithmetic Congruences} 
The Euler product
\begin{align*}
    \mathfrak{S}(m) =\prod_{p\text{ prime}}\sum_{k=0}^\infty V(p^k)=\prod_{p\text{ prime}} (1+V(p)+V(p^2)+\cdots)
\end{align*}
emerges from the fact that $\mathfrak{S}(m)$ converges absolutely by Lemma \ref{Singular Series Extension} and the multiplicativity of $V(q)$. Later in this section, we will utilize this form of $\mathfrak{S}(m)$ after further evaluating sums involving $V(q)$.

Suppose $1\leqslant n_i\leqslant t$. Let $\mathcal{M}_m(t,q)$ be the number of solutions of the congruence equation
\begin{align}\label{f(x) formulas}
g(n_1)+g(n_2)+\cdots+g(n_s)\equiv m \bmod q,
\end{align}
where $g(x)=\frac{1}{3}x^3+\frac{1}{2}x^2+\frac{1}{6}x$. By abuse of notation, we write $\mathcal{M}_m(q,q) = \mathcal{M}_m(q)$.

One way to estimate the number of solutions is to apply the Lang--Weil theorem \cite{langweil} on counting points on varieties over finite fields. However, we will proceed differently to obtain a power saving larger than a full factor of $p$.


\begin{lem}\label{V Multiplicativity 3}
For $q \in \mathbb{N}$, 
\[
\sum_{d\mid q}V(d)=q^{1-s}6^{-s}\mathcal{M}_m(6q,q). 
\]
\end{lem}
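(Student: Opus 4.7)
The plan is to express $\mathcal{M}_m(6q,q)$ as an exponential sum via orthogonality of additive characters modulo $q$, and then regroup that sum by divisors of $q$ so that it matches $\sum_{d \mid q} V(d)$.

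First, I would write
\[
\mathcal{M}_m(6q,q) = \sum_{1 \leqslant n_1,\ldots,n_s \leqslant 6q} \frac{1}{q}\sum_{a=0}^{q-1} e\!\left(\frac{a(g(n_1)+\cdots+g(n_s)-m)}{q}\right) = \frac{1}{q}\sum_{a=0}^{q-1} e\!\left(-\frac{am}{q}\right) T(q,a)^s,
\]
where $T(q,a) := \sum_{1 \leqslant n \leqslant 6q} e\!\left(\frac{a}{q}g(n)\right)$. Note that $T(q,a)$ makes sense for \emph{any} integer $a$, whereas $V(q,a)$ in \eqref{Definition of V(q,a)} was only used with $(a,q)=1$.

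Next, I would partition the outer sum over $a \in \{0,1,\ldots,q-1\}$ according to the quantity $d := q/\gcd(a,q)$, writing each $a$ uniquely as $a = a'q/d$ with $d \mid q$, $1 \leqslant a' \leqslant d$ and $(a',d)=1$ (the residue class $a=0$ corresponds to $d=1$, $a'=1$). This gives the standard identity
\[
\frac{1}{q}\sum_{a=0}^{q-1} e\!\left(-\frac{am}{q}\right) T(q,a)^s = \frac{1}{q}\sum_{d \mid q} \sum_{\substack{a'=1 \\ (a',d)=1}}^{d} e\!\left(-\frac{a'm}{d}\right) T\!\left(q,\tfrac{a'q}{d}\right)^{\!s}.
\]

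The crucial step is then to relate $T(q, a'q/d)$ to $V(d,a')$. Since $T(q,a'q/d) = \sum_{1 \leqslant n \leqslant 6q} e\!\left(\tfrac{a'}{d}g(n)\right)$ and the universal congruence $S_{n+6d} \equiv S_n \bmod d$ (a consequence of Lemma \ref{Congruence}) shows that the summand is periodic in $n$ with period dividing $6d$, I can collapse the range $1 \leqslant n \leqslant 6q$ into $q/d$ copies of $1 \leqslant n \leqslant 6d$, obtaining
\[
T\!\left(q,\tfrac{a'q}{d}\right) = \frac{q}{d}\, V(d,a').
\]

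Substituting this back and simplifying yields
\[
\mathcal{M}_m(6q,q) = \frac{1}{q}\sum_{d \mid q}\!\left(\frac{q}{d}\right)^{\!s} \sum_{\substack{a'=1 \\ (a',d)=1}}^{d} e\!\left(-\frac{a'm}{d}\right) V(d,a')^s = q^{s-1} 6^s \sum_{d \mid q} V(d),
\]
after pulling $(6d)^s$ into the definition of $V(d)$ from \eqref{V Definition}. Rearranging delivers the claimed identity. The only potential obstacle is verifying the periodicity step carefully when $6 \nmid d$, but Lemma \ref{Congruence} (giving $S_{n+6d} \equiv S_n \bmod d$ in all cases) takes care of this uniformly, so no further case analysis is needed.
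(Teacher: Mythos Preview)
Your proposal is correct and follows essentially the same approach as the paper: both start from the orthogonality expression
\[
\mathcal{M}_m(6q,q)=\frac{1}{q}\sum_{r=1}^q\sum_{n_1=1}^{6q}\cdots\sum_{n_s=1}^{6q}e\!\left(\frac{r(g(n_1)+\cdots+g(n_s)-m)}{q}\right)
\]
and then regroup by $d=q/\gcd(r,q)$, using the periodicity from Lemma~\ref{Congruence} to collapse $T(q,a'q/d)$ to $(q/d)V(d,a')$. The paper simply cites \cite[Lemma~6.4]{ours} for these details, whereas you have spelled them out explicitly.
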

\begin{proof}
Using the same method as in \cite[Lemma~6.4]{ours} with writing
\begin{align*}
\mathcal{M}_m(6q,q)=\frac{1}{q}\sum_{r=1}^q\sum_{n_1=1}^{6q}\sum_{n_2=1}^{6q}\cdots\sum_{n_s=1}^{6q}e(r(g(n_1)+g(n_2)+\cdots+g(n_s)-m)/q),
\end{align*}
we obtain that
\begin{align*}
    \mathcal{M}_m(6q,q) = q^{s-1}6^s\sum_{d\mid q}V(d).
\end{align*}

Thus, we have $\sum_{d\mid q}V(d)=q^{1-s}6^{-s}\mathcal{M}_m(6q,q)$.
\end{proof}
By choosing $q=p^k$ and applying Lemma \ref{V Multiplicativity 3}, we obtain
\begin{align*}
    \mathfrak{S}(m) &=\prod_{p\textnormal{ prime}}\sum_{k=0}^\infty V(p^k)=\prod_{p\textnormal{ prime}} \lim_{k\rightarrow\infty} 6^{-s}p^{k(1-s)}\mathcal{M}_m(6p^k,p^k).
\end{align*}

Let 
\begin{align}\label{T Definition}
T_m(p):= \lim_{k\rightarrow\infty} p^{k(1-s)}\mathcal{M}_m(p^k).
\end{align}

Since $\mathcal{M}_m(6q,q) = 6^s \mathcal{M}_m(q)$, 
\begin{align}\label{Crucial Limit Result}
\mathfrak{S}(m) = \prod_{p\text{ prime}} \lim_{k\rightarrow\infty} p^{k(1-s)}\mathcal{M}_m(p^k)= \prod_{p\text{ prime}} T_m(p).
\end{align}

Thus, the problem of estimating $\mathfrak{S}(m)$, is reduced to evaluating $T_m(p)$.
\begin{lem}\label{T_m(p) complete bound}
For any $s \geqslant 9$ and any prime $p$, we have
\begin{align*}
\left|T_m(p)-1\right| \leqslant e^{se^{89}}\left(\frac{p - 1}{p}\right)\cdot \left( \frac{p^{1 - \frac{5s}{21}}}{1 - p^{1 - \frac{5s}{21}}}  \right).
\end{align*}
\end{lem}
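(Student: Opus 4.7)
The plan is to reduce $T_m(p)-1$ to an absolutely convergent series over prime powers and then estimate each term using the Weyl bound already developed in Lemma~\ref{Singular Series Extension} together with a trivial bound.

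First, I would combine Lemma~\ref{V Multiplicativity 3} with the identity $\mathcal{M}_m(6q,q) = 6^s \mathcal{M}_m(q)$ (recorded just before \eqref{Crucial Limit Result}) to obtain $\sum_{d\mid q} V(d) = q^{1-s}\mathcal{M}_m(q)$ for every $q\in\N$. Specializing to $q=p^k$ and letting $k\to\infty$ in the definition \eqref{T Definition} of $T_m(p)$ gives
\[
T_m(p) = \sum_{j=0}^{\infty} V(p^j),
\]
and since $V(1)=1$, the claim reduces to bounding $\bigl|\sum_{j\geqslant 1} V(p^j)\bigr|$.

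Next, I would estimate each $|V(p^j)|$ for $j\geqslant 1$. Writing out the definition \eqref{V Definition} and using $\phi(p^j)=\frac{p-1}{p}p^j$, one has the uniform bound
\[
|V(p^j)| \;\leqslant\; \frac{p-1}{p}\,p^j\cdot \max_{(a,p)=1}\Bigl|\tfrac{V(p^j,a)}{6p^j}\Bigr|^{s}.
\]
When $p^j\geqslant e^{e^{45}}$, the Weyl estimate already extracted in Lemma~\ref{Singular Series Extension} gives $|V(p^j,a)|\leqslant 34\,p^{16j/21}$, leading to $|V(p^j)|\leqslant \frac{p-1}{p}\,(34/6)^s\,p^{j(1-5s/21)}$. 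When $p^j< e^{e^{45}}$, the trivial bound $|V(p^j,a)/(6p^j)|\leqslant 1$ gives $|V(p^j)|\leqslant \frac{p-1}{p}p^j$, and I would absorb the difference into the constant $e^{s e^{89}}$ by the crude check that $p^{5sj/21}\leqslant e^{s e^{89}}$; this follows from $j\log p<e^{45}$ together with the generous comparison $e^{45}\ll \tfrac{21}{5}\,e^{89}$. Combined with $(34/6)^s\leqslant e^{se^{89}}$, both cases yield
\[
|V(p^j)| \;\leqslant\; e^{se^{89}}\cdot\frac{p-1}{p}\cdot p^{j(1-5s/21)}\qquad (j\geqslant 1).
\]

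Finally, summing this geometric bound over $j\geqslant 1$—which converges because $s\geqslant 9$ forces $1-\tfrac{5s}{21}<0$—produces
\[
|T_m(p)-1|\;\leqslant\; e^{se^{89}}\cdot\frac{p-1}{p}\cdot\frac{p^{1-5s/21}}{1-p^{1-5s/21}},
\]
which is the claimed inequality. The one point requiring care, and the only genuine obstacle, is the unification of the two regimes: one must verify explicitly that the enormous constant $e^{se^{89}}$ is large enough to dominate the crude trivial bound in the small-$j$ range $p^j< e^{e^{45}}$, which is precisely the arithmetic check on $p^{5sj/21}$ above. Everything else is a limit argument and a geometric sum.
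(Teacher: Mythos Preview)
Your argument is correct and lands on exactly the same geometric-series estimate as the paper, but the route differs in a useful way. The paper expands $\mathcal{M}_m(p^k)$ directly via orthogonality, which forces a case split: for $p\geqslant 5$ it can work with $\tilde g(x)=2x^3+3x^2+x$ modulo $p^r$, while for $p=2,3$ the denominator $6$ in $g$ is not invertible and separate formulas for $T_m(2)$ and $T_m(3)$ are needed; in each case the paper then applies Lemma~\ref{Weyl Inequality V2} to the inner sum and absorbs small $p,r$ into the constant $e^{e^{89}}$. You instead identify $T_m(p)=\sum_{j\geqslant 0} V(p^j)$ from Lemma~\ref{V Multiplicativity 3} and bound $|V(p^j)|$ by reusing the estimate $|V(q,a)|\leqslant 34\,q^{16/21}$ already obtained in the proof of Lemma~\ref{Singular Series Extension}; because $V(q,a)$ is a sum of length $6q$, the $p=2,3$ difficulty is automatically absorbed and no case split is needed. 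Your handling of the range $p^j<e^{e^{45}}$ via the trivial bound and the check $j\log p<e^{45}<\tfrac{21}{5}e^{89}$ is exactly the sort of ``checking for small $p$ and $r$'' the paper invokes. The trade-off: your version is shorter and more modular, while the paper's version is more self-contained and makes the source of the constant $e^{e^{89}}$ slightly more visible.
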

\begin{proof} 
We consider cases where $p = 2$, $p =3$, and $p \geqslant 5$.

\noindent\textbf{Case 1 : $p \geqslant 5$.}  Let $\tilde{g}(x) = 2x^3+3x^2+x$. Since $(6, p)=1$, any solution $(n_1,n_2,\dots,n_s)$ \mbox{of \eqref{f(x) formulas}} is also a solution to
\begin{align}
\tilde{g}(n_1)+\tilde{g}(n_2)+\cdots+\tilde{g}(n_s)\equiv 6m \bmod p^k.\label{g(x) formula}
\end{align}

Thus, we may write
\begin{align}
\mathcal{M}_m(p^k)&=\frac{1}{p^k}\sum_{t=1}^{p^k}\sum_{n_1=1}^{p^k}\sum_{n_2=1}^{p^k}\cdots\sum_{n_s=1}^{p^k}e(t(\tilde{g}(n_1)+\tilde{g}(n_2)+\cdots+\tilde{g}(n_s)-6m)/p^k)\notag \\
    &=p^{(s-1)k}+\frac{1}{p^k}\sum_{t=1}^{p^k-1}e\bigg(-\frac{6mt}{p^k}\bigg)\bigg(\sum_{x=1}^{p^k}e\bigg(\frac{t\tilde{g}(x)}{p^k}\bigg)\bigg)^s, \label{Primes > 7 Step 1}
\end{align}
where the second term in the right-hand side can be rewritten as 
\begin{align}\label{Primes > 7 Step 2}
    p^{(s-1)k}\sum_{r=1}^k p^{-rs}\sum\limits_{\substack{b=1 \\ (b,p)=1}}^{p^r}e\bigg(\frac{-6mb}{p^r}\bigg) \bigg(\sum_{x=1}^{p^r}e\bigg(\frac{b\tilde{g}(x)}{p^r}\bigg)\bigg)^s.
\end{align}
Applying Lemma \ref{Weyl Inequality V2} with $X = q = p^{r}$, we have
\begin{align} \left|\sum_{x=1}^{p^r}e\bigg(\frac{b\tilde{g}(x)}{p^r}\bigg)\right|& \leqslant 2p^{\frac{3r}{4}}+12p^{\frac{3r}{4}+\frac{0.53305r}{\log (2r \log p)}+\frac{\log (r \log p)}{4\log p}}. 
\end{align}
Observe that for any $r\geq 1$ and $p\geq e^{e^{45}}$, \begin{align}\label{finding p values}
\frac{0.53305r}{\log (2r \log p)}+\frac{\log (r \log p)}{4\log p} \leqslant \frac{r}{84}.
\end{align}
One may also check that \eqref{finding p values} holds for all $p \geqslant 5$ if $r\geqslant e^{45}$. Furthermore, for $5 \leqslant p < e^{e^{45}}$ and $1 \leqslant r \leqslant e^{45}$, trivially we have
\begin{align*}
    \frac{0.53305r}{\log (2r \log p)}+\frac{\log (r \log p)}{4\log p} < e^{45}.
\end{align*}
Therefore, for any prime $p \geqslant5$ and all $r \in N$, we conclude that
\begin{align} \left|\sum_{x=1}^{p^r}e\bigg(\frac{b\tilde{g}(x)}{p^r}\bigg)\right|& \leqslant e^{e^{89}} p^{\frac{16r}{21}}.\label{p>=7 Weyl bound}
\end{align}

\noindent\textbf{Case 2 : $p=2$.} Following similar reasoning to Case 1, 
\begin{align*}
\mathcal{M}_m(2^k)
    &=2^{(s-1)k}+\frac{1}{2^k}\sum_{t=1}^{2^k-1}e\bigg(-\frac{3mt}{2^k}\bigg)\bigg(\sum_{x=1}^{2^k}e\bigg(\frac{t\tilde{g}(x)}{2^{k+1}}\bigg)\bigg)^s.
\end{align*}

Therefore,
\begin{align*}
\left|\mathcal{M}_m(2^k)-2^{(s-1)k}\right|& 
\leqslant 2^{(s-1)k}\sum_{r=1}^k 2^{-rs}\sum\limits_{\substack{b=1 \\ (b,2)=1}}^{2^r} \left|\sum_{x=1}^{2^r}e\bigg(\frac{b\tilde{g}(x)}{2^{r+1}}\bigg)\right|^s.
\end{align*}

Applying Lemma \ref{Weyl Inequality V2} with $X=2^r$ and $q=2^{r+1}$, we have for any $r \in \mathbb{N}$,
\begin{align}
\left|\sum_{x=1}^{2^r}e\bigg(\frac{b\tilde{g}(x)}{2^{r+1}}\bigg)\right|
\leqslant 2^{e^{44}}\cdot 2^{\frac{16r}{21}}.\label{p=2 Weyl bound}
\end{align}
\noindent\textbf{Case 3 : $p=3$.} This case will follow exactly like Case 2 when substituting $p=2$ by $p=3$ throughout the proof.

Combining \eqref{T Definition}, \eqref{Primes > 7 Step 1} and \eqref{Primes > 7 Step 2}, for $p\geqslant 5$, we write
\begin{align}    T_m(p)=1+\sum_{r=1}^\infty p^{-rs}\sum\limits_{\substack{b=1 \\ (b,p)=1}}^{p^r}e\bigg(\frac{-6mb}{p^r}\bigg)\bigg(\sum_{x=1}^{p^r}e\bigg(\frac{b\tilde{g}(x)}{p^r}\bigg)\bigg)^s. \notag
\end{align}

Furthermore,
\begin{align*}    
T_m(2)&=1+\sum_{r=1}^\infty 2^{-rs}\sum\limits_{\substack{b=1 \\ (b,2)=1}}^{2^r}e\bigg(\frac{-3mb}{2^r}\bigg)\bigg(\sum_{x=1}^{2^r}e\bigg(\frac{b\tilde{g}(x)}{2^{r+1}}\bigg)\bigg)^s, \\
T_m(3)&=1+\sum_{r=1}^\infty 3^{-rs}\sum\limits_{\substack{b=1 \\ (b,3)=1}}^{3^r}e\bigg(\frac{-2mb}{3^r}\bigg)\bigg(\sum_{x=1}^{3^r}e\bigg(\frac{b\tilde{g}(x)}{3^{r+1}}\bigg)\bigg)^s .
\end{align*}

Putting together \eqref{p>=7 Weyl bound} and \eqref{p=2 Weyl bound}, along with the above two series representations for $T_m(p)$, we deduce that for $s \geqslant 9$ and any prime $p$, we conclude
\begin{align}
\left|T_m(p)-1\right|&\leqslant\sum_{r=1}^\infty p^{-rs}\sum\limits_{\substack{b=1 \\ (b,p)=1}}^{p^r} e^{se^{89}}p^{\frac{16rs}{21}}\notag 
= e^{se^{89}}\bigg(\frac{p - 1}{p}\bigg)\cdot \bigg( \frac{p^{1 - \frac{5s}{21}}}{1 - p^{1 - \frac{5s}{21}}} \bigg) \notag.
\end{align}
\end{proof}

Consider the following congruence equation where $p \geqslant 5$,
\begin{align}\label{Derivative Equation}
x^2+x+\frac{1}{6}\equiv 0 \bmod p.
\end{align}

This equation has at most two solutions; call them $\alpha$ and $\beta$. We call a solution to \eqref{f(x) formulas mod p}, $(\delta_1,\delta_2, \dots, \delta_s)$, ``good" if there exists at least some $j \in \{1,2,\dots,s\}$ such that $\delta_j\not\in\{\alpha,\beta\}$.
\begin{lem}\label{hensel's lemma lifting}
Let $p \geqslant 5$ be a prime, $k \geqslant 2$ and consider the congruence equation
\begin{align}\label{f(x) formulas Hensel Lifting}
g(n_1)+g(n_2)+\cdots+g(n_s)\equiv m \bmod p^k,
\end{align}
where $g(x)=\frac{1}{3}x^3+\frac{1}{2}x^2+\frac{1}{6}x$ and $1\leqslant n_i\leqslant p^k$ for all $i \in \{1,2,\dots,s\}$. Then every good solution of \eqref{f(x) formulas mod p} can be lifted uniquely to $p^{(k-1)(s-1)}$ solutions of \eqref{f(x) formulas Hensel Lifting}. 
\end{lem}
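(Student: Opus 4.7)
The plan is a standard multivariable Hensel lifting argument, proceeding by induction on $k \geqslant 1$. I will show that each step from $\bmod\, p^{k-1}$ to $\bmod\, p^k$ multiplies the number of lifts of a fixed good mod-$p$ solution by exactly $p^{s-1}$. Starting with the single good solution at $k=1$, iterating $k-1$ times produces the claimed count $p^{(k-1)(s-1)}$. Uniqueness is built in: distinct lifts mod $p^k$ reducing to the same point mod $p^{k-1}$ are counted separately by the parameters introduced below.

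The algebraic engine is Taylor expansion. Since $p \geqslant 5$, the coefficients $\tfrac13, \tfrac12, \tfrac16$ of $g$ lie in $\Z_{(p)}$, and $g'(x) = x^2 + x + \tfrac16$ is exactly the polynomial from \eqref{Derivative Equation}. Writing an arbitrary lift of $n_i \bmod p^{k-1}$ as $n_i + t_i p^{k-1}$ with $t_i \in \{0, 1, \ldots, p-1\}$, the hypothesis $k \geqslant 2$ gives $2(k-1) \geqslant k$, so the quadratic and cubic remainder terms in the Taylor expansion vanish mod $p^k$ (their $2!$ and $3!$ denominators being units since $p \geqslant 5$), and one obtains
\[
g(n_i + t_i p^{k-1}) \equiv g(n_i) + g'(n_i)\, t_i \, p^{k-1} \pmod{p^k}.
\]

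Now suppose inductively that the good mod-$p$ solution has been lifted to a solution $(n_1, \ldots, n_s)$ of \eqref{f(x) formulas Hensel Lifting} modulo $p^{k-1}$. Then $\sum_i g(n_i) - m = c\, p^{k-1}$ for some $c \in \{0, 1, \ldots, p-1\}$, and summing the Taylor expansion above reduces the mod-$p^k$ congruence to the single linear equation in $\F_p^s$:
\[
\sum_{i=1}^s g'(n_i)\, t_i \equiv -c \pmod{p}.
\]
Here goodness enters decisively: since some $\delta_j \notin \{\alpha, \beta\}$ we have $g'(\delta_j) \not\equiv 0 \bmod p$, and $n_j \equiv \delta_j \bmod p$ forces $g'(n_j)$ to be a unit in $\F_p$. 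The linear equation therefore has exactly $p^{s-1}$ solutions, giving exactly $p^{s-1}$ lifts of $(n_1, \ldots, n_s)$ to a solution of \eqref{f(x) formulas Hensel Lifting} modulo $p^k$.

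There is no genuine obstacle here: the only care needed is in the Taylor-expansion bookkeeping (invertibility of $2$ and $6$ mod $p$) and in noting that ``goodness'' is automatically inherited by every mod-$p^{k-1}$ lift, since the mod-$p$ reduction is fixed throughout the induction. Telescoping the factor $p^{s-1}$ over the $k-1$ lifting steps yields the claimed count $p^{(k-1)(s-1)}$ and completes the proof.
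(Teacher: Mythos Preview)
Your proof is correct and is precisely the standard multivariable Hensel lifting that the paper invokes; the paper's own proof consists of one sentence observing that \eqref{Derivative Equation} is $g'(x)\equiv 0$ and then appealing to Hensel's lemma without further detail. You have simply unpacked that appeal explicitly, and the argument is sound.
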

\begin{proof} 
Observe that equation \eqref{Derivative Equation} corresponds to the derivative of $g(x)$. Therefore, the lemma follows directly from Hensel's lemma.  
\end{proof}
\begin{rem}\label{Hensel Remark}
Suppose that in \eqref{f(x) formulas mod p} all but $r$ of the $n_i$'s are fixed. Then the number of solutions to \eqref{f(x) formulas mod p} which are not good is at most $2^r$.
\end{rem}
\begin{lem}\label{T_m(p) lower bound}
For $s\geqslant 9$ and a prime $p$,
\[T_m(p)> \max\bigg\{p^{1-s},1-\frac{2}{\sqrt{p}}-\frac{3}{p}\bigg\}.\]
\end{lem}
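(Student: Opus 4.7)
My plan is to leverage Lemma \ref{hensel's lemma lifting} (Hensel lifting for good solutions) to reduce the problem to counting good solutions modulo $p$. If $G(p)$ denotes the number of good solutions to $g(n_1)+\cdots+g(n_s)\equiv m \bmod p$ (with $p\geqslant 5$), each of these lifts uniquely to $p^{(k-1)(s-1)}$ solutions of the congruence modulo $p^k$, so $\mathcal{M}_m(p^k)\geqslant G(p)\cdot p^{(k-1)(s-1)}$, and passing to the limit in \eqref{T Definition} yields
\[
T_m(p)\geqslant p^{1-s}\,G(p).
\]
The task is therefore to produce a sharp lower bound on $G(p)$.

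For the bound $T_m(p)>1-2/\sqrt{p}-3/p$, I would first estimate the total count $\mathcal{M}_m(p)$ by fixing $(n_3,\ldots,n_s)\in(\mathbb{Z}/p\mathbb{Z})^{s-2}$ arbitrarily and counting pairs $(n_1,n_2)$ on the affine plane curve $g(n_1)+g(n_2)\equiv c \bmod p$, where $c=m-g(n_3)-\cdots-g(n_s)$. For each $c$ this curve is the affine part of an elliptic curve (after a Weierstrass-style change of variables analogous to the one carried out in the proof of Lemma~\ref{elliptic curve satisfied}), so the Hasse--Weil bound yields at least $p-2\sqrt{p}$ affine points per value of $c$. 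Summing over the $p^{s-2}$ choices of $(n_3,\ldots,n_s)$ gives $\mathcal{M}_m(p)\geqslant p^{s-1}-2p^{s-3/2}$. By Remark~\ref{Hensel Remark} at most $2^s$ of these solutions fail to be good, and since $2^s\leqslant 3p^{s-2}$ for every prime $p\geqslant 3$ and every $s\geqslant 9$ (e.g.\ $2^9=512\leqslant 6561=3\cdot 3^7$), we arrive at $G(p)\geqslant p^{s-1}\bigl(1-2/\sqrt{p}-3/p\bigr)$, which delivers the desired bound. For the bound $T_m(p)>p^{1-s}$, the same estimate gives $G(p)\gg p^{s-1}\geqslant 2$ whenever $p\geqslant 5$, making the inequality strict.

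The remaining cases $p=2$ and $p=3$, where Lemma~\ref{hensel's lemma lifting} fails because $6$ is not a unit, must be handled directly. Following Cases 2 and 3 of the proof of Lemma~\ref{T_m(p) complete bound}, I would work with the integral polynomial $\tilde{g}(x)=2x^3+3x^2+x$ modulo $2^{k+1}$ and $3^{k+1}$ respectively, and use the explicit Fourier-inversion expansion of $\mathcal{M}_m(p^k)$ there together with Lemma~\ref{Weyl Inequality V2} to produce enough lifted solutions to guarantee $T_m(p)>p^{1-s}$ (one checks that $1-2/\sqrt{p}-3/p$ is negative for $p\in\{2,3\}$, so only the first bound is needed there). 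The principal obstacle will be making the Hasse--Weil step completely effective: verifying that the smooth projective model of $g(x)+g(y)=c$ has genus $1$ for generic $c$ and carefully controlling the finitely many values of $c$ for which the curve degenerates (becoming singular or reducible), since the Hasse bound applies to smooth curves and one also loses a bounded number of points passing between affine and projective counts. A secondary nuisance is the case-by-case verification for $p=2,3$.
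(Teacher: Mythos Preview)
Your plan for $p\geqslant 5$ coincides with the paper's argument: fix $(n_3,\ldots,n_s)$, apply the Hasse--Weil lower bound from Lemma~\ref{elliptic curve satisfied} to the cubic $g(n_1)+g(n_2)\equiv c\pmod p$ (that lemma already handles the degenerate values of $c$ you flag as the ``principal obstacle'', delivering $\mathcal{N}_p\geqslant p+1-2\sqrt p$ uniformly in $c$), subtract the at most $2^s$ non-good tuples via Remark~\ref{Hensel Remark}, and Hensel-lift through Lemma~\ref{hensel's lemma lifting}. The paper runs this only for $p\geqslant 11$ and disposes of $p\in\{5,7\}$ by the same direct construction it uses for $p=2,3$; your observation that the curve count already suffices for $p=5,7$ (since $1-2/\sqrt p-3/p<0$ there and one only needs $G(p)\geqslant 2$) is a minor but legitimate streamlining.

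Where your plan fails is $p\in\{2,3\}$. Feeding Lemma~\ref{Weyl Inequality V2} into the Fourier expansion of $\mathcal{M}_m(p^k)$ reproduces precisely the estimate of Lemma~\ref{T_m(p) complete bound}, whose constant is $e^{se^{89}}$; for $p=2,3$ the resulting bound on $|T_m(p)-1|$ is therefore astronomically larger than $1$ and yields no positive lower bound for $T_m(p)$ whatsoever. The paper proceeds instead by exhibiting a single explicit solution and lifting it. For $p=2$: fix $n_2,\ldots,n_s$ arbitrarily, set $B=g(n_2)+\cdots+g(n_s)-m$, and seek $n_1$ of the shape $n_1=6n$; then $g(6n)+B=72n^3+18n^2+n+B$ is an integral polynomial in $n$ whose derivative is $\equiv 1\pmod 2$, so the root $n\equiv -B\pmod 2$ lifts by Hensel to every modulus $2^k$. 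This already gives $\mathcal{M}_m(2^k)\geqslant 2^{(k-1)(s-1)}$ and hence $T_m(2)\geqslant 2^{1-s}$; the cases $p=3,5,7$ are handled by the analogous one-variable trick. Replace your Fourier/Weyl plan for the small primes with this direct Hensel construction.
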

\begin{proof}
 We consider the following cases. \\
\noindent \textbf{Case 1 : $p\geqslant 11$.} 
For $p \geqslant 11$,  By Lemma \ref{elliptic curve satisfied}, one can obtain an effective lower bound for $\mathcal{M}_m(p)$. From Remark \ref{Hensel Remark}, there are at most $4p^{s-2}$ bad solutions where Hensel's lifting may fail. After removing these bad solutions, one can lift the remaining good solutions to solutions modulo $p^k$ using Lemma \ref{hensel's lemma lifting}. This yields the bound
\begin{align*}
T_m(p)\geqslant 1-\frac{2}{\sqrt{p}}-\frac{3}{p}.
\end{align*}

\noindent \textbf{Case 2 : $p=2$.} Let $B = g(n_{2}) + \cdots + g(n_{s}) - m$. 
 Consider the congruence equation
\begin{align}\label{1 Variable}
g_0(x):=\frac{x^3}{3} + \frac{x^2}{2} + \frac{x}{6} + B \equiv 0 \bmod 2.
\end{align}

Taking $x=6n$ for some integer $n$, $g_0(6n) = 72n^3+18n^2+n+B$. Note that $n\equiv -B\bmod 2$, and \mbox{therefore, $g_0(6n)\equiv 0 \bmod 2$.} Moreover, $g_0'(6n)\equiv 1\bmod 2$. Thus, using Hensel's lemma, any solution of the form $x=6n$ such that $n\equiv -B\bmod 2$ is always a ``good" solution.

Applying \mbox{Lemma \ref{hensel's lemma lifting}}, we have $\mathcal{M}_m(2^k)\geqslant 2^{(k-1)(s-1)},$
implying,
\begin{align*}
T_m(2) \geqslant \lim_{k \rightarrow \infty}2^{k(1-s)}2^{(k-1)(s-1)}=2^{1-s}>0.
\end{align*}
\textbf{Case 3 : $p=3,5,7$.} The arguments for $p=3,5,7$ follow analogous reasoning to Case 2.
\end{proof}
\subsection{A lower bound for $\mathfrak{S}(m)$}
We now establish a lower bound for $\mathfrak{S}(m)$.
\begin{lem}\label{lower bound for S(m)}
We have $\mathfrak{S}(m)>0$. More precisely,
\begin{align}\label{Sigma Bound}
 \mathfrak{S}(m) \geqslant \num{1.56e-3}\cdot210^{1-s}\cdot 2^{\frac{42z}{42-5s}} \exp\bigg(-\frac{z \log 2}{\log z}\bigg(1+\frac{1.2762}{\log z}\bigg)\bigg),
\end{align}
where $z=(2 e^{se^{89}}+1)^{\frac{21}{5s - 21}}$.
\end{lem}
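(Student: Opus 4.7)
My plan is to work from the Euler product decomposition $\mathfrak{S}(m) = \prod_p T_m(p)$ established in \eqref{Crucial Limit Result} and split the primes into four ranges, applying the appropriate bound from Lemma \ref{T_m(p) complete bound} or Lemma \ref{T_m(p) lower bound} in each range. The cutoff $z = (2e^{se^{89}}+1)^{21/(5s-21)}$ is calibrated so that for every $p > z$ we have $p^{1-5s/21} \leqslant 1/(2e^{se^{89}}+1)$; substituting into Lemma \ref{T_m(p) complete bound} then yields $|T_m(p)-1| \leqslant 1/2$ in the tail, which is the numerical calibration that makes the four range estimates fit together. Positivity $\mathfrak{S}(m) > 0$ follows at once since each local factor is positive by Lemma \ref{T_m(p) lower bound} and the product converges absolutely by Lemma \ref{Singular Series Extension}.

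For the four smallest primes $p \in \{2,3,5,7\}$, I will use $T_m(p) \geqslant p^{1-s}$ (the other bound in Lemma \ref{T_m(p) lower bound} is negative here), producing the factor $210^{1-s}$. For the next block $p \in \{11,13,17,19,23\}$, I will use $T_m(p) \geqslant 1 - 2/\sqrt{p} - 3/p$ and multiply the five explicit values directly; a direct numerical check shows the product is at least $1.56 \times 10^{-3}$, which accounts for the leading constant in the statement. For the moderate range $29 \leqslant p \leqslant z$, a direct check shows $1 - 2/\sqrt{p} - 3/p \geqslant 1/2$ already at $p = 29$, so $T_m(p) \geqslant 1/2$ throughout, giving $\prod_{29 \leqslant p \leqslant z} T_m(p) \geqslant 2^{-\pi(z)+9}$. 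Applying the explicit Rosser--Schoenfeld bound $\pi(z) \leqslant (z/\log z)(1 + 1.2762/\log z)$ then converts this into the $\exp\bigl(-(z\log 2/\log z)(1+1.2762/\log z)\bigr)$ factor of the statement, with the harmless $2^{9}$ absorbed as slack.

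The tail $p > z$ is the most delicate piece. Since $|T_m(p)-1| \leqslant 1/2$ in this range, the convexity inequality $\log(1-x) \geqslant -2x\log 2$ on $[0,1/2]$ gives
\[
\log \prod_{p > z} T_m(p) \geqslant -2\log 2 \sum_{p > z} |T_m(p)-1|.
\]
From Lemma \ref{T_m(p) complete bound} together with $1/(1 - p^{1-5s/21}) \leqslant 2$ for $p > z$, I will bound the tail sum by $2 e^{se^{89}} \sum_{p > z} p^{1-5s/21}$; an integral comparison gives $\sum_{p > z} p^{1-5s/21} \leqslant 21 z^{2-5s/21}/(5s-42)$. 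The identity $z^{2-5s/21} = z/(2e^{se^{89}}+1)$, immediate from the definition of $z$, collapses the $e^{se^{89}}$ factor and leaves the tail bounded by $21z/(5s-42)$, producing $\prod_{p>z} T_m(p) \geqslant 2^{42z/(42-5s)}$ after exponentiation. Multiplying the four range contributions completes the proof. The main obstacle I expect is not any single analytic step but the explicit-constant bookkeeping: verifying that factors such as $2^{\pi(28)}$ and the integral comparison's numerical slack absorb cleanly into the stated constants without requiring a re-calibration of $z$ or the leading coefficient.
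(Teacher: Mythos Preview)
Your proposal is correct and follows essentially the same approach as the paper, which defers the argument to \cite[Lemma~6.10]{ours} together with Lemmas \ref{T_m(p) complete bound} and \ref{T_m(p) lower bound} and Dusart's prime-counting bound; your four-range decomposition, numerical verification for the small primes, and tail estimate via the calibration $z^{1-5s/21}=(2e^{se^{89}}+1)^{-1}$ match that structure precisely. One minor correction: the bound $\pi(z)\leqslant (z/\log z)(1+1.2762/\log z)$ with the constant $1.2762$ is due to Dusart \cite[Theorem~6.9]{dusart2010estimates}, not Rosser--Schoenfeld, and it is Dusart that the paper cites.
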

\begin{proof} 
Recall the formula of $\mathfrak{S}(m)$ in \eqref{Crucial Limit Result}. We split the estimation into small-prime and large-prime parts. For large prime, Lemma \ref{T_m(p) complete bound} shows that $T_m(p)$ is close to $1$, so a Taylor expansion gives a lower bound \mbox{for $\prod_{p>z}T_m(p)$.} For small primes, Lemma \ref{T_m(p) lower bound} gives explicit lower bound for each prime $p$, sogether with Dusart's upper bound for $\pi(x)$ (see \cite[Corollary~5.2]{dusart2010estimates}) gives the lower bound. Detailed analgous proof can be found in \cite[Lemma~6.10]{ours}.
\end{proof}
\section{Major Arcs : Part II}\label{sec: Major Arcs Sec III}  Here we estimate the singular integral $J^{*}(m)$ given by \eqref{J* definition}. We assume $N^{\delta} \geqslant e^{e^{45}}$.
Using $N$ from \eqref{Defining N}, let $N_0 =\frac{1}{3}N^3$. Define
\begin{align}
    v_1(\theta):= \frac{1}{3}\sum_{1 \leqslant n \leqslant N_0} n^{-2/3}e(\theta n) \quad \textrm{and} \quad v_2(\theta) := \int_0^{N_0^{1/3}} e(\theta t^3) \dd t.\label{v_1 definition}
\end{align}
\begin{lem}\label{v_3(theta)-v_1(theta) bound lemma}
Let $v(\theta)$ and $v_1(\theta)$ be as defined in \eqref{v theta definition} and \eqref{v_1 definition}, respectively. Then
\[\bigg \lvert v_1(\theta)-\left(\frac{1}{3}\right)^{1/3}v(\theta)\bigg \rvert \leqslant 7N^\delta.
\]
\end{lem}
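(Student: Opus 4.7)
The plan is to pass from the integral $v(\theta)$ to a Riemann-sum representation via a change of variables, and then bound the discrepancy between the resulting integral and $v_1(\theta)$ by standard sum-vs.-integral comparisons. First I would substitute $u = t^3/3$ in the definition \eqref{v theta definition}, so that $t = (3u)^{1/3}$, $dt = 3^{-2/3} u^{-2/3}\, du$, and the limits become $u = 1/3$ and $u = N_0$. This gives
\[
\left(\tfrac{1}{3}\right)^{1/3} v(\theta) \;=\; \frac{1}{3}\int_{1/3}^{N_0} u^{-2/3} e(\theta u)\, du,
\]
so the quantity to bound becomes
\[
v_1(\theta) - \left(\tfrac{1}{3}\right)^{1/3} v(\theta) \;=\; \frac{1}{3}\sum_{n=1}^{N_0} n^{-2/3} e(\theta n) \;-\; \frac{1}{3}\int_{1/3}^{N_0} u^{-2/3} e(\theta u)\, du.
\]

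Next I would split the integral as $\int_{1/3}^{1} + \int_{1}^{N_0}$. The piece on $[1/3, 1]$ is harmless: $\frac{1}{3}\int_{1/3}^1 u^{-2/3}\, du = 1 - 3^{-1/3} < 1$. For the remaining piece, I would write $\int_1^{N_0} = \sum_{n=1}^{N_0 - 1}\int_n^{n+1}$ and compare term-by-term with the sum, isolating the boundary contribution $\tfrac{1}{3} N_0^{-2/3} e(\theta N_0)$, which is negligible since $N_0^{-2/3} = 3^{2/3} N^{-2}$. On each interval $[n, n+1]$, the integrand differs from $n^{-2/3} e(\theta n)$ by at most
\[
|n^{-2/3} - u^{-2/3}| + u^{-2/3}|e(\theta n) - e(\theta u)| \;\leqslant\; \tfrac{2}{3} n^{-5/3} + 2\pi|\theta|\, n^{-2/3},
\]
using the mean-value theorem on $u^{-2/3}$ and the Lipschitz bound $|e(\theta n) - e(\theta u)| \leqslant 2\pi|\theta|$ for $|n - u| \leqslant 1$.

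Summing these bounds contributes $\tfrac{2}{3}\sum_{n \geqslant 1} n^{-5/3}$, which is an absolute constant (of order $\zeta(5/3)$), plus $2\pi|\theta|\sum_{n=1}^{N_0 - 1} n^{-2/3} \leqslant 2\pi|\theta| \cdot 3 N_0^{1/3} = 2\pi \cdot 3^{2/3} |\theta| N$. Inserting $|\theta| \leqslant N^{\delta - 3}$ shows this second part is $O(N^{\delta - 2})$, so it is dominated by $N^\delta$ for $N$ large. Multiplying everything by $\tfrac{1}{3}$ and combining with the sub-unit contribution from $\int_{1/3}^1$ and the boundary term, the total error is bounded by a small absolute constant plus a decaying term, which is easily absorbed into $7N^\delta$.

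The main obstacle is just bookkeeping to ensure the explicit constant $7$ holds cleanly; there is a generous amount of slack (the genuine bound is $O(1) + O(N^{\delta-2})$), so the constant is not tight. No subtler analytic input is needed, since $|\theta|$ is pinned to the major-arc range and the integrand is smooth on $[1, N]$.
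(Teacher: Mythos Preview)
Your proof is correct, but it proceeds differently from the paper's. The paper introduces the auxiliary integral $v_2(\theta)=\int_0^{N_0^{1/3}} e(\theta t^3)\,dt$, applies Abel summation to $v_1(\theta)$ and integration by parts to $v_2(\theta)$ so that both take the shape ``boundary term $-\,2\pi i\theta\int(\cdot)\,e(\theta t)\,dt$'', and then bounds $|v_1-v_2|$ using the uniform estimate $\bigl|\tfrac{1}{3}\sum_{n\le t}n^{-2/3}-t^{1/3}\bigr|=O(1)$ integrated over $[1,N_0]$; this produces a contribution of size $|\theta|\cdot O(N_0)=O(N^{\delta})$, which together with $|v_2-(1/3)^{1/3}v|\le 3^{-1/3}$ gives the stated $7N^{\delta}$. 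You instead substitute $u=t^3/3$ directly in $v(\theta)$ and compare the resulting integral with the sum $v_1(\theta)$ interval by interval on $[n,n+1]$. Because you exploit the decay $u^{-2/3}$ rather than a flat $O(1)$ bound on partial sums, your error is only $O(1)+O\bigl(|\theta|\,N_0^{1/3}\bigr)=O(1)+O(N^{\delta-2})$, which is sharper than the paper's $O(N^{\delta})$ and is absorbed into $7N^{\delta}$ with a lot of room to spare. Your route is more elementary (no partial summation, no intermediate $v_2$) and yields a tighter constant-order estimate; the paper's decomposition via $v_2$ is closer to the standard circle-method treatment of singular integrals and interfaces more directly with the later bounds on $v_1(\theta)$ already available in the referenced work.
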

\begin{proof}
We have
\begin{align}
   \bigg \lvert v_2(\theta) -\left(\frac{1}{3}\right)^{1/3}v(\theta) \bigg \rvert &\leqslant \int_0^{\sqrt[3]{\frac{1}{3}}} \left|e(\theta t^3)\right| \dd t=\left ( \frac{1}{3}\right)^{1/3}\label{|v_2(theta)-v_1(theta)|}.
\end{align}

By partial summation, we find that 
\begin{align}\label{Partial Summation v_1}
    v_1(\theta)=e(N_0\theta)\bigg(\frac{1}{3}\sum_{1 \leqslant n \leqslant N_0} n^{-2/3}\bigg)-2\pi i\theta\int_1^{N_0} \bigg(\frac{1}{3}\sum_{1 \leqslant n \leqslant t} n^{-2/3}\bigg)e(\theta t) \dd t.
\end{align}
and
\begin{align}\label{By Parts v_2}
v_2(\theta) = e(N_0\theta)N_0^{1/3}-2\pi i\theta\int_0^{N_0} t^{1/3} e(\theta t) \dd t.
\end{align} 

Thus, we deduce that
\begin{align}
\left|v_1(\theta)-v_2(\theta) \right| &\leqslant \bigg \lvert \frac{1}{3}\sum_{1 \leqslant n \leqslant N_0} n^{-2/3}-N_0^{1/3}\bigg \rvert +2\pi|\theta|\bigg (1+ \int_1^{N_0} \bigg \lvert t^{1/3} - \bigg(\frac{1}{3}\sum_{1 \leqslant n \leqslant t} n^{-2/3}\bigg)\bigg \rvert \dd t  \bigg) \notag\\
&\leqslant3+2\pi|\theta|(3N_0-2) \label{v_1-v_2}.
\end{align}

Recall that $|\theta|<N^{\delta-3}$. Putting together \eqref{|v_2(theta)-v_1(theta)|} and \eqref{v_1-v_2}, we have
\begin{align*}
\bigg \lvert v_1(\theta)-\left(\frac{1}{3}\right)^{1/3}v(\theta)\bigg \rvert \leqslant  3+\pi(6N_0-4)N^{\delta-3}+\sqrt[3]{1/3} \leqslant 7N^\delta.
\end{align*}
\end{proof}
\subsection{Completing the Singular Integral}  We begin by considering the integral
\begin{align}
J_1^*(m) &:= \int_{-N^{\delta-3}}^{N^{\delta-3}} v_1(\theta)^se(-\theta m) \dd \theta. \label{J_1* definition}
\end{align}

By Lemma \ref{v_3(theta)-v_1(theta) bound lemma},
\begin{align}
|J_1^*(m)-J^*(m)|& \leqslant \int_{-N^{\delta-3}}^{N^{\delta-3}}\bigg \lvert v_1(\theta)^s-\left(\left(\frac{1}{3}\right)^{1/3}v(\theta)\right)^s\bigg \rvert \dd\theta\notag\\
&\leqslant 7N^{\delta}\int_{-N^{\delta-3}}^{N^{\delta-3}}\sum_{j=1}^s \lvert v_1(\theta) \rvert^{s-j}\cdot \bigg \lvert \left(\frac{1}{3}\right)^{1/3}v(\theta)\bigg \rvert^{j-1} \dd\theta\notag\\
&\leqslant 14sN^\delta N_0^{(s-1)/3}N^{\delta-3} \leqslant 21s\left(\frac{1}{3}\right)^{s/3}N^{s-4+2\delta}\label{J_1*(m)-J*(m)}.
\end{align}

Extending the integral $J^{*}_1(m)$ to an integral over a unit interval, we define
\begin{align}
J_1(m) := \int_{-1/2}^{1/2}v_1(\theta)^se(-\theta m)\dd\theta.\label{J_1(m)}
\end{align}

We approximate $J^{*}(m)$ with $J_1(m)$. 
By Lemma 7.2 in \cite{ours}, 
\[
|v_{1}(\theta)| \leqslant \min \{ 2m^{1/3}, 2 |\theta|^{-1/3}\}. \]

Moreover, $N^{\delta-3}< \frac{1}{2}$, so we have
\begin{align}
|J_1(m)-J_1^*(m)| &\leqslant \bigg|\int_{-1/2}^{-N^{\delta-3}} v_1(\theta)^se(-\theta m)\dd\theta\bigg|+\bigg|\int_{N^{\delta-3}}^{1/2} v_1(\theta)^se(-\theta m)\dd\theta\bigg|\notag\\
    &\leqslant 2\int_{N^{\delta-3}}^{1/2}\bigg(\min \bigg \{2m^{1/3}, 2 |\theta|^{-1/3} \bigg \}\bigg)^s \dd\theta\notag\\
    &\leqslant 2^{s+1}\int_{N^{\delta-3}}^{1/2}\theta^{-s/3}\dd\theta \leqslant 2^{s+1}\frac{3}{s-3}N^{\frac{3\delta-s\delta+3s-9}{3}}.\label{J_1(m)-J_^*(m)}
\end{align}

Combining \eqref{J_1*(m)-J*(m)} with \eqref{J_1(m)-J_^*(m)}, we obtain
\begin{align}
|J_1(m)-J^*(m)| \leqslant 21s\left(\frac{1}{3}\right)^{s/3}N^{s-4+2\delta}+\frac{2^s \cdot 6}{s-3}N^{\frac{3\delta-s\delta+3s-9}{3}}\label{J_1(m)-J^*(m)}.
\end{align} 
\begin{lem}\label{J_1(m) approximation bound}
Let $s\geqslant 2$. Then 
\begin{align*}
\bigg|J_1(m,s)-\Gamma\bigg(\frac{4}{3}\bigg)^s\Gamma\bigg(\frac{s}{3}\bigg)^{-1}m^{s/3-1}\bigg|\leqslant 10^{s-2}m^{(s-1)/3-1}.
\end{align*}

\end{lem}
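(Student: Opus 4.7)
My plan is to turn $J_1(m,s)$ into an exact partition sum via Fourier orthogonality, compute the corresponding continuous Dirichlet integral in closed form to isolate the main term, and then control the residual discretization error.

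First I would expand the $s$-th power of $v_1(\theta)$ as
\[
v_1(\theta)^s=\frac{1}{3^s}\sum_{1\leqslant n_1,\ldots,n_s\leqslant N_0}(n_1n_2\cdots n_s)^{-2/3}\,e\bigl(\theta(n_1+\cdots+n_s)\bigr).
\]
By \eqref{Defining N}, $N_0=N^3/3>m$, so any tuple with $n_i\geqslant 1$ and $n_1+\cdots+n_s=m$ automatically satisfies $n_i\leqslant m<N_0$, and the upper bound $N_0$ is vacuous. Integrating this expansion against $e(-\theta m)$ on $[-1/2,1/2]$ and using $\int_{-1/2}^{1/2}e(\theta k)\dd\theta=1$ when $k=0$ and $0$ for $k\in\Z\setminus\{0\}$, I obtain the exact identity
\[
J_1(m,s)=\frac{1}{3^s}\sum_{\substack{n_1,\ldots,n_s\geqslant 1\\ n_1+\cdots+n_s=m}}(n_1n_2\cdots n_s)^{-2/3}.
\]

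Next, I would compare this partition sum to the continuous Dirichlet integral
\[
\mathcal{I}(m):=\int\!\cdots\!\int_{\substack{x_1,\ldots,x_{s-1}>0\\ x_1+\cdots+x_{s-1}<m}}\!\Bigl(\prod_{i=1}^{s-1}x_i\Bigr)^{\!-2/3}(m-x_1-\cdots-x_{s-1})^{-2/3}\dd x_1\cdots\dd x_{s-1}.
\]
Rescaling $x_i=my_i$ reduces $\mathcal{I}(m)$ to a Liouville--Dirichlet integral over the standard simplex with all parameters equal to $1/3$, whence
\[
\mathcal{I}(m)=\frac{\Gamma(1/3)^s}{\Gamma(s/3)}\,m^{s/3-1}.
\]
The functional equation $\Gamma(1/3)=3\Gamma(4/3)$ now absorbs the $3^{-s}$ normalization exactly, yielding $3^{-s}\mathcal{I}(m)=\Gamma(4/3)^{s}\Gamma(s/3)^{-1}m^{s/3-1}$, which is precisely the claimed main term.

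It remains to bound $|J_1(m,s)-3^{-s}\mathcal{I}(m)|$ by $10^{s-2}m^{(s-1)/3-1}$. I would parameterize both sum and integral by the free coordinates $(x_1,\ldots,x_{s-1})$ with $x_s=m-x_1-\cdots-x_{s-1}$, and split the simplex into a bulk region $\{\min_i x_i\geqslant m^{2/3}\}$ and a boundary region where some coordinate is smaller. On the bulk the integrand $\varphi(x)=\prod_{i<s}x_i^{-2/3}(m-\sum_{i<s}x_i)^{-2/3}$ is smooth with $|\partial_{x_j}\varphi|$ bounded by a constant times $m^{-2/3}\varphi$, so iterated one-dimensional Euler--Maclaurin (or partial summation) comparisons produce a relative error of order $m^{-1/3}$, that is, absolute error of order $m^{(s-1)/3-1}$. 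On the boundary both the sum and the integral are bounded directly using the integrability of $t^{-2/3}$ near $0$, yielding a contribution of the same order. The main obstacle will be tracking all constants through the $s-1$ iterated one-dimensional estimates and the bulk/boundary cutoff so that the final bound is at most $10^{s-2}$ rather than an unspecified $C^s$; this is essentially explicit bookkeeping, following the template of the analogous singular-integral estimate in the authors' earlier paper \cite{ours}.
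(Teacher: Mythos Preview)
The paper does not actually prove this lemma here; its entire proof is the one-line citation ``See \cite[Lemma~7.4]{ours}.'' So there is no in-paper argument to compare against, and your proposal is a reconstruction of the cited result.

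Your reconstruction is the standard one and is correct in outline. The orthogonality step giving
\[
J_1(m,s)=3^{-s}\sum_{\substack{n_1,\ldots,n_s\geqslant 1\\ n_1+\cdots+n_s=m}}(n_1\cdots n_s)^{-2/3}
\]
is exact once you have checked $N_0>m$ (which you did), and the identification of the main term via the Liouville--Dirichlet integral together with $\Gamma(1/3)=3\,\Gamma(4/3)$ is clean and correct. The remaining discretization estimate---comparing the lattice sum over the simplex to the continuous integral---is indeed where all the work lies; your bulk/boundary split with an $m^{2/3}$ cutoff and iterated Euler--Maclaurin is the right mechanism and does produce an error of size $m^{(s-1)/3-1}$. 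The only part you have not actually carried out is the tracking of constants to land on $10^{s-2}$ rather than an unspecified $C^{s}$; you flag this yourself, and it is precisely what the cited lemma in \cite{ours} supplies. In short: your approach matches what the citation is doing, and the sketch is sound.
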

\begin{proof}
By orthogonality, we may rewrite 
\[
J_1(m,s) =3^{-s}\mathop{\sum_{n_1=1}^{N_0}\cdots\sum_{n_s=1}^{N_0}}_{n_1+\cdots+n_s=m} (n_1n_2\cdots n_s)^{-2/3}.
\]
\cite[Lemma 7.3]{ours} with $\alpha=\beta=\frac{1}{3}$ shows that Lemma \ref{J_1(m) approximation bound} holds for $s=2$. The genral case follows then by induction. See \cite[Lemma~7.4]{ours} for details.
\end{proof}

\section{Asymptotic Results for Representations as Sums of Square Pyramidal Numbers}\label{sec: Proof of Main Theorem} 
In this section, we will prove Lemma \ref{main Theorem for formula} and establish more general versions of this result. Throughout the calculations, we eventually switch estimates involving $N$ to those involving $m$ using \eqref{Defining N}. Same as before, we use the bound $(3m)^{1/3}<N<2m^{1/3}$.
\begin{lem}\label{main Theorem for formula}
For $m,s \in \mathbb{N}$, let $\mathcal{C}_s(m)$ denote the number of ways of representations of $m$ as the sum of $s$ square pyramidal numbers. Then for any $s\geqslant 9$, $0<\delta<\frac{1}{5}$, and $m>\frac{1}{3}(e^{e^{45}})^{3/\delta}$,
\begin{align}
\bigg|\mathcal{C}_s(m)& -3^{\frac{s}{3}}\mathfrak{S}(m)\Gamma\bigg(\frac{4}{3}\bigg)^{s}\Gamma\bigg(\frac{s}{3}\bigg)^{-1}m^{\frac{s}{3}-1}\bigg| \notag \\
     &\leqslant 50s\left(\frac{m}{8}\right)^{\frac{5\delta+s-4}{3}}+ \frac{(6\cdot 2^s+2\cdot {20}^{s/3}(s-3))\cdot 6^s\cdot 3^{\frac{21s-(5s-42)\delta}{63}}}{(s-3)(\frac{5s}{21}-2)} \cdot m^{\frac{(42-5s)\delta+21s-63}{63}}\notag\\
&\quad +21s\cdot 2^{s}e^{e^{46}} m^{\frac{s-4+2\delta}{3}}+24^{\frac{s}{3}}\cdot 2^{\frac{3\delta-s\delta-9}{3}}\cdot \frac{6e^{e^{46}}2^s}{s-3}\cdot m^{\frac{3\delta-s\delta+3s-9}{9}}\notag\\
&\quad + 3^{\frac{s}{3}} \left( e^{e^{46}} \cdot 10^{s-2} m^{\frac{s-4}{3}}\right) +152 \cdot 13^{s-8} \cdot m^{\frac{s}{3}-1-\frac{\delta(s-8)}{12}+\frac{0.53305(s-8)+6.3966}{1.2\log \log m}+\frac{(s-8)\log\log m}{4\log m}} \label{Icosahedral 1st Theorem Eq},
\end{align}
where $\mathfrak{S}(m)$ satisfies \eqref{Sigma Bound}.
\end{lem}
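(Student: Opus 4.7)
The plan is to assemble the estimate by chaining together, via the triangle inequality, the approximations developed across Sections \ref{sec: initial setup}--\ref{sec: Major Arcs Sec III}, matching each successive error term in \eqref{Icosahedral 1st Theorem Eq} to the corresponding lemma. First I would split $\mathcal{C}_s(m)$ into major- and minor-arc integrals using \eqref{Major+Minor Arcs}. The minor-arc contribution is bounded directly by Lemma \ref{Minor Arcs : thm}, and after writing $N \leqslant 2 m^{1/3}$ and absorbing the factor $(\log N)^{(s-8)/4} = N^{(s-8)\log\log N / (4\log N)}$ into the exponent of $m$, this produces the last term in the stated error. I also note that the hypothesis $m > \frac{1}{3}(e^{e^{45}})^{3/\delta}$ is equivalent to $N^\delta \geqslant e^{e^{45}}$, which is precisely the threshold required later for Lemma \ref{Singular Series Extension}.

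Next I would peel off the major-arc integral in four successive stages. Lemma \ref{Approx 4} gives
\[
\Bigl|\int_{\mathfrak{M}} f(\alpha)^s e(-\alpha m)\, d\alpha \;-\; \mathcal{I}_s^*(m)\Bigr| \;\leqslant\; 50 s\, N^{5\delta+s-4},
\]
which converts, using $N^3 \leqslant 8m/3$, to the first term of \eqref{Icosahedral 1st Theorem Eq}. Recalling $\mathcal{I}_s^*(m) = 3^{s/3}\mathfrak{S}(m,N^\delta) J^*(m)$ from \eqref{Approximating Major Arc Integral}, I would then write
\[
\bigl| \mathcal{I}_s^*(m) - 3^{s/3}\mathfrak{S}(m)\, J_1(m) \bigr| \;\leqslant\; 3^{s/3}\bigl[\, |\mathfrak{S}(m,N^\delta) - \mathfrak{S}(m)|\cdot |J^*(m)| \;+\; |\mathfrak{S}(m)|\cdot |J^*(m) - J_1(m)|\, \bigr],
\]
and insert the bounds from Lemma \ref{Singular Series Extension} (both the completion error and $|\mathfrak{S}(m)| \leqslant e^{e^{46}}$), the $|J^*(m)-J_1(m)|$ bound from \eqref{J_1(m)-J^*(m)}, and a crude bound $|J^*(m)| \leqslant |J_1(m)| + |J^*(m)-J_1(m)|$ with $|J_1(m)|$ controlled via Lemma \ref{J_1(m) approximation bound}. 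The resulting three contributions, after substitution $N \sim (3m)^{1/3}$, become exactly the second, third, and fourth error terms in \eqref{Icosahedral 1st Theorem Eq}. Finally, Lemma \ref{J_1(m) approximation bound} replaces $J_1(m)$ by $\Gamma(4/3)^s \Gamma(s/3)^{-1} m^{s/3-1}$; multiplying the resulting error $10^{s-2} m^{(s-1)/3-1}$ by $3^{s/3}|\mathfrak{S}(m)| \leqslant 3^{s/3} e^{e^{46}}$ yields the fifth error term. The positivity and the explicit lower bound \eqref{Sigma Bound} are exactly the content of Lemma \ref{lower bound for S(m)}.

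The argument is conceptually nothing more than careful bookkeeping, since every genuine analytic or algebraic estimate has already been established. The real obstacle is not mathematical but notational: one must propagate explicit constants through five layers of triangle inequalities, combining bounds that are stated in the variable $N$ with the size of $m$ via $N = \lceil (3m)^{1/3}\rceil + 1$, and verify that each resulting term lands at the exact exponent and coefficient displayed in \eqref{Icosahedral 1st Theorem Eq}. In particular, the cleanest place for an error is the conversion of $N^{5\delta+s-4}$ into $(m/8)^{(5\delta+s-4)/3}$, and the tracking of the $(\log N)^{(s-8)/4}$ factor in the minor-arc bound into the fractional exponent of $m$; these require a careful but routine comparison between the rough inequalities $N^3 \geqslant 3m$ and $N \leqslant 2 m^{1/3}$ valid in the stated range of $m$.
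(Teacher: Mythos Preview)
Your proposal follows essentially the same architecture as the paper's proof: split into major and minor arcs, bound the minor arcs by Lemma~\ref{Minor Arcs : thm}, replace the major-arc integral successively by $\mathcal{I}_s^*(m)$, then $3^{s/3}\mathfrak{S}(m)J_1(m)$, then the Gamma-function main term, using Lemmas~\ref{Approx 4}, \ref{Singular Series Extension}, \ref{J_1(m) approximation bound} and \eqref{J_1(m)-J^*(m)} at the respective steps.

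The one substantive difference is your treatment of $|J^*(m)|$. You propose the indirect route $|J^*(m)| \leqslant |J_1(m)| + |J^*(m)-J_1(m)|$ with $|J_1(m)|$ controlled via Lemma~\ref{J_1(m) approximation bound}. The paper instead bounds $|J^*(m)|$ directly from its defining integral, using $\bigl|(1/3)^{1/3}v(\theta)\bigr| \leqslant \min\{(1/3)^{1/3}N,\, 2|\theta|^{-1/3}\}$ to obtain
\[
|J^*(m)| \;\leqslant\; \left(\frac{2^{s+1}\cdot 3}{s-3} + 2\cdot 20^{s/3}\right) m^{\frac{s}{3}-1}.
\]
It is precisely this direct bound that generates the coefficient $(6\cdot 2^s + 2\cdot 20^{s/3}(s-3))$ appearing in the second error term of \eqref{Icosahedral 1st Theorem Eq}. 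Your indirect route would still produce a valid power-saving error of the same exponent, but with a different (and somewhat messier) constant; so if the goal is to land on the \emph{exact} displayed bound, you should replace that step with the paper's direct estimate of $|J^*(m)|$.
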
 
\begin{proof} 
For $s>3$ and $\lvert\theta\rvert\leqslant\frac{1}{2}$, we have the bound $\left\lvert \left(\frac{1}{3}\right)^{1/3}v(\theta)\right\rvert\leqslant\min\left\{\left(\frac{1}{3}\right)^{1/3}N,2\lvert\theta\rvert^{-1/3}\right\}.$
Therefore, 
\begin{align}
|J^*(m)|&= \left|\int_{-N^{\delta-3}}^{N^{\delta-3}}\left(\frac{1}{3}\right)^{s/3}v^s(\theta)e(-\theta m) \dd\theta\right| \leqslant \left(\frac{2^{s+1} \cdot 3}{s-3}+2\cdot 20^{s/3}\right)m^{\frac{s}{3}-1}\label{|J^*(m)|}.
\end{align}

Combining Lemma \ref{Singular Series Extension}, \eqref{J_1(m)-J^*(m)} and \eqref{|J^*(m)|}, we deduce that
\begin{align}
\bigg|\mathfrak{S}_{s}^*(m)- 3^{\frac{s}{3}}\mathfrak{S}(m)J_1(m)\bigg| 
    &\leqslant \bigg(\frac{2}{5}\bigg)^{\frac{s}{3}}\bigg(|J^*(m)|\cdot |\mathfrak{S}(m)-\mathfrak{S}(m, N^\delta)|+|\mathfrak{S}(m)||J_1(m)-J^*(m)|\bigg)\notag\\
    &\leqslant \frac{(6\cdot 2^s+2\cdot 20^{\frac{s}{3}}(s-3))\cdot 6^s\cdot 3^{\frac{21s-(5s-42)\delta}{63}}}{(s-3)(\frac{5s}{21}-2)}\cdot m^{\frac{(42-5s)\delta+21s-63}{63}} \notag\\
    &\quad+21s\cdot 2^{s}e^{e^{46}} m^{\frac{s-4+2\delta}{3}}+24^{\frac{s}{3}}\cdot 2^{\frac{3\delta-s\delta-9}{3}}\cdot \frac{6e^{e^{46}}2^s}{s-3}\cdot m^{\frac{3\delta-s\delta+3s-9}{9}}.\label{R*(m)-(2/5)^{s/3}S(m)J_1(m)}
\end{align}

Thus, in \eqref{Approximating Major Arc Integral}, one may replace $J^{*}(m)$ by $J_1(m)$ while allowing a small error.  Combining Lemma \ref{Approx 4} \mbox{and \eqref{R*(m)-(2/5)^{s/3}S(m)J_1(m)},} 
\begin{align}
\bigg|\int_{\mathfrak{M}}&f(\alpha)^s e(-\alpha m) \dd\alpha-3^{\frac{s}{3}}\mathfrak{S}(m)J_1(m)\bigg|\notag\\
&\leqslant
50s\left(\frac{m}{8}\right)^{\frac{5\delta+s-4}{3}}+ \frac{(6\cdot 2^s+2\cdot {20}^{s/3}(s-3))\cdot 6^s\cdot 3^{\frac{21s-(5s-42)\delta}{63}}}{(s-3)(\frac{5s}{21}-2)} \cdot m^{\frac{(42-5s)\delta+21s-63}{63}}\notag\\
&\quad +21s\cdot 2^{s}e^{e^{46}} m^{\frac{s-4+2\delta}{3}}+24^{\frac{s}{3}}\cdot 2^{\frac{3\delta-s\delta-9}{3}}\cdot \frac{6e^{e^{46}}2^s}{s-3}\cdot m^{\frac{3\delta-s\delta+3s-9}{9}}.
\label{f(alpha)-S(m)J_1(m) bound}
\end{align}

Using Lemma \ref{J_1(m) approximation bound} and \eqref{f(alpha)-S(m)J_1(m) bound}, with $s\geqslant 9$ and $m>\frac{1}{3}(e^{e^{45}})^{3/\delta}$ , 
\begin{align}
\bigg|\int_{\mathfrak{M}}&f(\alpha)^s e(-\alpha m) \dd\alpha-3^{\frac{s}{3}}\Gamma\bigg(\frac{4}{3}\bigg)^{s}\Gamma\bigg(\frac{s}{3}\bigg)^{-1}\mathfrak{S}(m) m^{\frac{s}{3}-1}\bigg|\notag\\
 &\leqslant
50s\left(\frac{m}{8}\right)^{\frac{5\delta+s-4}{3}}+ \frac{(6\cdot 2^s+2\cdot {20}^{s/3}(s-3))\cdot 6^s\cdot 3^{\frac{21s-(5s-42)\delta}{63}}}{(s-3)(\frac{5s}{21}-2)} \cdot m^{\frac{(42-5s)\delta+21s-63}{63}}\notag\\
&\quad +21s\cdot 2^{s}e^{e^{46}} m^{\frac{s-4+2\delta}{3}}+24^{\frac{s}{3}}\cdot 2^{\frac{3\delta-s\delta-9}{3}}\cdot \frac{6e^{e^{46}}2^s}{s-3}\cdot m^{\frac{3\delta-s\delta+3s-9}{9}} + 3^{\frac{s}{3}} \left( e^{e^{46}} \cdot 10^{s-2} m^{\frac{s-4}{3}}\right). \label{4.26 bounds}
\end{align}

By Lemma \ref{Minor Arcs : thm} and \eqref{4.26 bounds}, the claim is proved.
\end{proof}
\begin{rem}\label{main theorem asymp} We now optimize our choice of $\delta$ in terms of $s$ to minimize the exponents of $m$ in the bound provided in the statement of Lemma \ref{main Theorem for formula}. 
Note that these six exponents are exactly the same as in \cite[Theorem 8.2]{ours}. Therefore, by \cite[Theorem 8.2]{ours},  one may choose $\delta = \frac{21}{63+5s}$. In fact, such an observation is not occusional. See Remark \ref{rem: last remark on general polynomials} for details.
\end{rem}
If we let $s=9$, and $V(q,a)$ and $S_n$ be given by \eqref{Definition of V(q,a)} and \eqref{defn: square pyramidal number} respectively, we may define
\begin{align}\label{Arithmetic Factor for s=9}
\mathfrak{S}_{9,S}(m) : = \sum_{q=1}^{\infty} \sum\limits_{\substack{a=1 \\ (a,q)=1}}^q\bigg(\frac{V(q,a)}{6q}\bigg)^9 e\bigg(-\frac{am}{q}\bigg), \nonumber
\end{align}
which yields Lemma \ref{main Theorem for formula}.  To ensure that in Theorem \ref{Theorem: Representation}, the error term is smaller than the main term, we only need $m>e^{e^{94}}$. Lemmas \ref{main Theorem for formula} and \ref{main theorem asymp} hold for a much lower threshold for $m$.\par

Now, with Lemma \ref{main Theorem for formula}, we immediately obtain the following cannonball polygon result. 
\begin{lem}\label{lem: representations of polygons}
Let $\mathfrak{C}_s(\mathcal{Z})$ denote the number of classes of cannonball polygons with multiplicity $s$ and with largest side of length $\mathcal{Z}$. Then for any $s\geqslant 9$, $0<\delta<\frac{1}{5}$, and $\mathcal{Z}^2>\frac{1}{3}(e^{e^{45}})^{3/\delta}$,
\begin{align}
\bigg|\mathfrak{C}_s(\mathcal{Z})& -3^{\frac{s}{3}}\mathfrak{S}(\mathcal{Z}^2)\Gamma\bigg(\frac{4}{3}\bigg)^{s}\Gamma\bigg(\frac{s}{3}\bigg)^{-1}\mathcal{Z}^{\frac{2s}{3}-2}\bigg| \notag \\
     &\leqslant 50s\left(\frac{\mathcal{Z}^2}{8}\right)^{\frac{5\delta+s-4}{3}}+ \frac{(6\cdot 2^s+2\cdot {20}^{s/3}(s-3))\cdot 6^s\cdot 3^{\frac{21s-(5s-42)\delta}{63}}}{(s-3)(\frac{5s}{21}-2)} \cdot {\mathcal{Z}}^{\frac{(84-10s)\delta+42s-126}{63}}\notag\\
&\quad +21s\cdot 2^{s}e^{e^{46}} {\mathcal{Z}}^{\frac{2s-8+4\delta}{3}}+24^{\frac{s}{3}}\cdot 2^{\frac{3\delta-s\delta-9}{3}}\cdot \frac{6e^{e^{46}}2^s}{s-3}\cdot {\mathcal{Z}}^{\frac{6\delta-2s\delta+6s-18}{9}}\notag\\
&\quad + 3^{\frac{s}{3}} \left( e^{e^{46}} \cdot 10^{s-2} {\mathcal{Z}}^{\frac{2s-8}{3}}\right) +152 \cdot 13^{s-8} \cdot {\mathcal{Z}}^{\frac{2s}{3}-2-\frac{\delta(s-8)}{6}+\frac{1.0661(s-8)+12.7932}{1.2\log \log m}+\frac{(s-8)\log\log m}{2\log m}},
\end{align}
where $\mathfrak{S}(\mathcal{Z}^2)$ satisfies \eqref{Sigma Bound}. 
\end{lem}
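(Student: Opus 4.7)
The plan is to reduce this statement directly to Lemma \ref{main Theorem for formula} via the bijection between classes of cannonball polygons and representations of $\mathcal{Z}^2$ as a sum of square pyramidal numbers. Recall from the discussion after the Chainsaw construction (Section \ref{sec: Introduction}) and from the passage immediately preceding Lemma \ref{Lemma: Representation} that for any $s\geqslant 9$ and $\mathcal{Z}\in\N$, the classes of cannonball polygons of multiplicity $s$ with final side of length $\mathcal{Z}$ are in one-to-one correspondence with cannonball weight functions $w$ satisfying \eqref{weight function for cannonball polygon}, and the latter are in one-to-one correspondence with the ordered tuples $(n_1,\dots,n_s)$ with $\mathcal{Z}^2=\sum_{i=1}^s S_{n_i}$. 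In other words,
\[
\mathfrak{C}_s(\mathcal{Z}) \;=\; \mathcal{C}_s(\mathcal{Z}^2).
\]

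Next, I would set $m=\mathcal{Z}^2$ in the conclusion of Lemma \ref{main Theorem for formula}. The main term becomes
\[
3^{s/3}\,\mathfrak{S}(\mathcal{Z}^2)\,\Gamma\!\left(\tfrac{4}{3}\right)^{\!s}\Gamma\!\left(\tfrac{s}{3}\right)^{\!-1}\!\mathcal{Z}^{\frac{2s}{3}-2},
\]
which matches the asserted main term of Lemma \ref{lem: representations of polygons}. Every error term on the right-hand side of \eqref{Icosahedral 1st Theorem Eq} is of the form $C_s\, m^{E(s,\delta)}$ for an explicit exponent $E(s,\delta)$; substituting $m=\mathcal{Z}^2$ simply doubles each exponent, producing exactly the exponents of $\mathcal{Z}$ listed in the statement. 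For the last minor-arc error term, the logarithmic factor $\frac{(s-8)\log\log m}{4\log m}$ lives in the exponent as well and under $m=\mathcal{Z}^2$ becomes $\frac{(s-8)\log\log(\mathcal{Z}^2)}{4\log(\mathcal{Z}^2)}=\frac{(s-8)\log\log m}{2\log m}$ with the appropriate rewriting; the factors $0.53305(s-8)+6.3966$ are also doubled and appear as $1.0661(s-8)+12.7932$, matching the stated bound. The hypothesis $m>\tfrac{1}{3}\bigl(e^{e^{45}}\bigr)^{3/\delta}$ translates immediately into $\mathcal{Z}^2>\tfrac{1}{3}\bigl(e^{e^{45}}\bigr)^{3/\delta}$, which is precisely the hypothesis of the lemma.

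There is essentially no obstacle, since the identification $\mathfrak{C}_s(\mathcal{Z})=\mathcal{C}_s(\mathcal{Z}^2)$ has already been established, and Lemma \ref{main Theorem for formula} provides the asymptotic formula together with all explicit error terms. The only piece to verify carefully is the bookkeeping of exponents under the substitution $m\mapsto \mathcal{Z}^2$; this is a mechanical check term by term. Concretely, the steps of the write-up would be: (i) invoke the correspondence to write $\mathfrak{C}_s(\mathcal{Z})=\mathcal{C}_s(\mathcal{Z}^2)$; (ii) apply Lemma \ref{main Theorem for formula} with $m=\mathcal{Z}^2$; (iii) rewrite each exponent and each $\log\log$ factor in terms of $\mathcal{Z}$; and (iv) note that the bound \eqref{Sigma Bound} for $\mathfrak{S}(m)$ carries over verbatim with $m=\mathcal{Z}^2$, giving the stated lower bound on $\mathfrak{S}(\mathcal{Z}^2)$. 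This completes the proof.
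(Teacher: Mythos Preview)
Your approach is correct and is exactly what the paper does: it simply states that ``with Lemma \ref{main Theorem for formula}, we immediately obtain the following cannonball polygon result,'' i.e., one invokes the correspondence $\mathfrak{C}_s(\mathcal{Z})=\mathcal{C}_s(\mathcal{Z}^2)$ from Section \ref{sec: auxiliary lemmas} and substitutes $m=\mathcal{Z}^2$ into Lemma \ref{main Theorem for formula}. One small slip in your write-up: in the displayed equality for the $\log\log$ term you dropped the factor $2$ coming from passing from base $\mathcal{Z}^2$ to base $\mathcal{Z}$ (the correct exponent of $\mathcal{Z}$ is $2\cdot\frac{(s-8)\log\log(\mathcal{Z}^2)}{4\log(\mathcal{Z}^2)}=\frac{(s-8)\log\log(\mathcal{Z}^2)}{2\log(\mathcal{Z}^2)}$, matching the stated bound where the residual $m$ stands for $\mathcal{Z}^2$).
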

\begin{rem}\label{rem: last remark on general polynomials}
    In fact, such an asymptotic formula is believed to hold for any degree-three polynomial $f$ such \mbox{that $f(0)=0$ and $f(1)=1$,} with only potentially a different lower bound on the arithmetic factor $\mathfrak{S}(\mathcal{Z}^2)$ depending on $f$. One can see an established conclusion for degree-four polynomials in \cite[Theorem 1.3 and its generalized version]{DTZ25}. 
\end{rem}
\section*{Acknowledgements}
The authors are also grateful to the referee for many valuable suggestions that have greatly increased the clarity and value of the manuscript.

\end{document}